\documentclass[11pt]{amsart}
\usepackage{amsfonts}
\usepackage{bbm}
\usepackage{mathrsfs,txfonts}
\usepackage{color}
\usepackage{amssymb,amsmath,amsfonts,amsthm}
\usepackage[colorlinks=true, pdfstartview=FitV, linkcolor=blue, citecolor=blue, urlcolor=blue]{hyperref}
\numberwithin{equation}{section}

\textwidth =160mm \textheight =222mm \oddsidemargin 2mm
\evensidemargin 2mm \headheight=13pt \setlength{\topmargin}{-0.3cm}

\newtheorem{thm}{Theorem}[section]
\newtheorem{lem}{Lemma}[section]
\newtheorem{cor}{Corollary}[section]

\newtheorem{prop}[thm]{Proposition}

\theoremstyle{remark}
\newtheorem{rem}[thm]{Remark}


\renewcommand{\ddot}[1]{{#1}'}
\renewcommand{\dot}[1]{{#1}}
\newcommand{\pr}[1]{\left( #1 \right)}
\newcommand{\tn}[1]{\textnormal{#1}}
\newcommand{\abs}[1]{\left| #1\right|}
\newcommand{\n}[2]{{\left\| #1 \right\|}_{#2}}
\newcommand{\f}[2]{\frac{#1}{#2}}
\newcommand{\lan}[1]{\left\langle #1\right\rangle}
\newcommand{\wh}[1]{\widehat{#1}}
\newcommand{\wt}[1]{\,\widetilde{#1}\,}
\newcommand{\lp}{L^{\ddot{p}}_{\ddot{{a}}}  L^{\dot{p}}_{\dot{a}}}
\newcommand{\lpa}{L^{\ddot{p}_1}_{\ddot{a}_1}  L^{\dot{p}_1}_{\dot{a_1}}}
\newcommand{\lpb}{L^{\ddot{p}_2}_{\ddot{a}_2}  L^{\dot{p}_2}_{\dot{a}_2}}
\newcommand{\al}{\alpha}

\newcommand{\ga}{\gamma}

\newcommand{\de}{\delta}
\newcommand{\De}{\Delta}

\newcommand{\ve}{\varepsilon}

\newcommand{\si}{\sigma}

\newcommand{\na}{\nabla}

\newcommand{\rn}{\mathbf{R}^n}

\newcommand{\bn}{\mathbf{N}}

\newcommand{\cs}{\mathcal S}

\newcommand{\cf}{\mathcal F}

\newcommand{\p}{\partial}

\newcommand{\ds}{\displaystyle}

\theoremstyle{definition}

\def\rr{\mathbf{R}}
\def\nn{\mathbf{N}}

\def\zz{\mathbf{Z}}

\def\mm{\mathbf{m}}

\def\supp{\text{supp}}

\def\({\left(}
\def\){\right)}

\def\S{\mathcal{S}}
\def\A{\mathbf{A}}

\allowdisplaybreaks

\numberwithin{equation}{section}

\begin{document}

\title[The Kato-Ponce Inequality with Polynomial Weights]{The Kato-Ponce Inequality with Polynomial Weights}

\author[S. Oh and X. Wu]
{Seungly Oh and Xinfeng Wu}
\bigskip

\address{Department of Mathematics\\
Western New England University\\
Springfield, MA-01119, U.S.A.}
\email{seungly.oh@wne.edu}

\address{Department of Mathematics\\
China University of Mining \& Technology(Beijing)\\
Beijing, 100083, P. R. China}
\email{wuxf@cumtb.edu.cn}

\thanks{The second author supported by NNSF of China (Nos. 11671397, 12071473).
 }

\subjclass[2010]{Primary 42B20. Secondary 46E35}
\keywords{Kato-Ponce inequality; fractional Leibniz rule; mixed norm Lebesgue spaces; polynomial weights}

\begin{abstract}{We consider various versions of fractional Leibniz rules (also known as Kato-Ponce inequalities) with polynomial weights $\langle x\rangle^a = (1+|x|^2)^{a/2}$ for $a\ge 0$.  We show that the weighted Kato-Ponce estimate with the inhomogeneous Bessel potential $J^s = (1- \De)^{{s}/{2}}$ holds for the full range of bilinear Lebesgue exponents, for all polynomial weights, and for the sharp range of the degree $s$. This result, in particular, demonstrates that neither the classical Muckenhoupt weight condition nor the more general multilinear weight condition is required for the weighted Kato-Ponce inequality. We also consider a few other variants such as commutator and mixed norm estimates, and analogous conclusions are derived.  Our results contain strong-type inequalities for both $L^1$ and $L^\infty$ endpoints, which extend  several existing results.
}\end{abstract}

\maketitle
\tableofcontents
\section{Introduction}

Boundedness properties of fractional differential operators acting on a product of functions have a wide range of applications, not only in functional analysis, but in the analysis of nonlinear partial differential equations.  A typical statement in this context concerns how the (fractional) differential operator acting on the product can be controlled using the ``derivatives'' of individual functions.  Such properties can be used to manage nonlinear interaction terms in Sobolev spaces for the analysis of partial differential equations.   In literature, such estimates are known as fractional Leibniz rules, since they resemble the structure of the product rule for classical derivatives.

Motivated by a question posed by Kato in \cite{Kato}, Kato and Ponce showed in~\cite{KP} that any pair of Schwartz functions $f$, $g$ satisfies
\[
\n{J^{{s}} (fg) - f \, J^s g}{L^p(\rr^n)} \leq C\pr{  \n{\na f}{L^\infty(\rr^n)} \n{J^{{s-1}} g}{L^p(\rr^n)} + \n{J^{{s}} f}{L^p(\rr^n) } \n{g}{L^\infty(\rr^n)}},
\]
where $1<p<\infty$, $s>0$, and $C$ is a constant depending only on $n$, $p$ and $s$, which is known as the Kato-Ponce commutator estimate. The authors  applied this estimate in the analysis of Euler and Navier-Stokes equations.  If $J^s$ above is replaced by any classical differential operator, we can see that $f J^s g$ would cancel the highest-order derivative on $g$ within the expansion of $J^s (fg)$ according to the classical Leibniz rule, which would leave the highest-order (with respect to $g$) term to be  $\p_j f J^{s-1} g$.  This inequality demonstrates that a similar property  holds for the fractional differential operator $J^s$.  Many variants of such estimate have been studied in the literature (see \cite{BO, BL, M, MS13a, Tao} and references therein).

Another classical result in this regard is given in \cite{KPV}, where Kenig, Ponce and Vega proved a similar property for the homogeneous differential operator $D^s := (-\De)^{\f{s}{2}}$ and applied the estimate in the analysis of generalized Korteweg-de Vries equation.  The authors showed that any pair of Schwartz functions $f$ and $g$ satisfies
    \[
    \n{D^s(fg) - f D^s g - g D^s f}{L^p(\rr^n)} \leq C \n{D^{s_1} f}{L^{p_1}(\rr^n)} \n{D^{s_2} g}{L^{p_2}(\rr^n)}
    \]
for $s_1, s_2, s\in (0,1)$  and $p_1,p_2, p \in (1,\infty)$ satisfying $s_1 + s_2 = s$, and $\f{1}{p_1} + \f{1}{p_2} = \f{1}{p}$.    While this statement is restricted to $s\in (0,1)$, generalizations exist in the literature which allows for $s>1$ by further complicating the structure of this inequality; see {\it e.g.,}  \cite{FGO,L}.

The following variants are known as the Kato-Ponce inequality (or fractional Leibniz rule)
\begin{align}
\n{D^s (fg)}{L^p(\rr^n)} &\lesssim  \n{D^s f}{L^{p_1}(\rr^n)} \n{g}{L^{p_2}(\rr^n)} + \n{f}{L^{p_1}(\rr^n)}\n{D^s g}{L^{p_2}(\rr^n)}, \label{eq:kp}\\
\n{J^s (fg)}{L^p(\rr^n)} &\lesssim  \n{J^s f}{L^{p_1}(\rr^n)} \n{g}{L^{p_2}(\rr^n)} + \n{f}{L^{p_1}(\rr^n)}\n{J^s g}{L^{p_2}(\rr^n)}, \label{eq:kp2}
\end{align}
where $1<p_1,p_2,p<\infty$ satisfy the natural H\"{o}lder exponent condition  $\frac{1}{p_1}+ \frac{1}{p_2}=\frac{1}{p}$.
Christ and Weinstein \cite{CW} proved \eqref{eq:kp} for $s\in (0,1)$ and applied it for the analysis of generalized Korteweg-de Vries equation, and Gulisashvili and Kon \cite{GK} showed \eqref{eq:kp} and \eqref{eq:kp2} for $s>0$ and applied them in the analysis of Schr\"odinger semigroups. Both results restricted the target index within the non-endpoint Banach range, $1<p<\infty.$

Since these inequalities were frequently used in applications, they became research subjects of independent interest in the field of multilinear harmonic analysis; see {\it e.g.,}  \cite{Grafakos2,G12}.  In this field, the $L^{p_1}\times L^{p_2}\to L^p$ boundedness of bilinear operators often extends to the quasi-Banach regime $p < 1$ as long as $p_1,p_2\in (1,\infty)$; see \cite{GT,KS,LT1,LT2}.  In \cite{BMMN}, Bernicot, Maldonado, Moen, and Naibo extended \eqref{eq:kp} to allow for $p<1$, under an additional assumption that $s> n$.  This result was further extended by Muscalu and Schlag \cite{MS13a} and also by Grafakos and the first author of this manuscript \cite{GO} to allow for a wider range $s>\max\left\{n\pr{\f{1}{p}-1},0\right\}$ or $s\in 2\nn$.  Furthermore, the sharpness of this range was demonstrated by counterexamples in \cite{GO,MS13a} when $s\leq \max\left\{n\pr{\f{1}{p}-1},0\right\}$ and $s\not\in 2\nn$.

The endpoint case $p_1=p_2=p=\infty$ was first considered by Grafakos, Maldonado and Naibo \cite{GMN}, where the authors were able to show the corresponding estimates with the target space $L^{\infty}$ replaced by BMO.  This is a natural conclusion in context of existing results in multilinear harmonic analysis, since bilinear Coifman-Meyer multipliers (more generally, bilinear Calder\'on-Zygmund operators) are known \cite{GT} to be bounded from $L^\infty \times L^\infty$ to $BMO$ (rather than to $L^\infty$). Moreover, standard methods used in the literature to derive the boundedness properties of bilinear operators, such as the vector-valued maximal inequality, the square-function estimate and Coifman-Meyer multiplier theorem, fail to be valid at   $L^\infty$ endpoint (that is, $L^\infty\times L^\infty \to L^\infty$) as well as $L^1$ endpoints  (more specifically, $L^1\times L^{p_2} \to L^p$).  Instead, these techniques lead to weaker results at these endpoints, namely, $L^1\times L^{p_2} \to L^{p,\infty}$ and $L^\infty\times L^\infty \to BMO$.

In this context of multilinear harmonic analysis, it was surprising when Bourgain and Li \cite{BL} showed that the strong-type estimates \eqref{eq:kp} and \eqref{eq:kp2} hold at the $L^\infty$ endpoint.  Their proof used a low-to-high frequency exchange in order to achieve summability at this endpoint.  Adaptation of this method by the authors of the present manuscript \cite{OW} produced strong-type estimates at the $L^1$ endpoints, completing the validity of these inequalities in the full quasi-Banach regime, $\f{1}{2}\leq p \leq \infty$ and $1\leq p_1, p_2 \leq \infty$.  This indicates that the Kato-Ponce inequalities should be distinguished from other  multilinear estimates which fail to satisfy strong-type $L^1$ and $L^\infty$ bounds.

This manuscript will demonstrate that the validity of various Kato-Ponce inequalities also exceeds conventional thresholds of weighted multilinear estimates.  The weighted Lebesgue norm $L^p_w$ (also denoted $L^p(w)$) is defined by
\[
\n{f}{L^p_w} := \pr{\int_{\rr^n} |f(x)|^p w(x)\, dx}^{\f{1}{p}}
\]
for $0<p < \infty$, while $\n{f}{L^\infty_w} = \n{f}{L^\infty}$.

Often, weighted inequalities in harmonic analysis are only expected to be valid when the weights satisfy the Muckenhoupt weight condition, also known as the $A_p$~condition.  This is a natural assumption since it is well-known \cite{Grafakos2, stein93} that the $w\in A_p$ for $1<p<\infty$ is a necessary and sufficient condition for the $L^p_{w}$ boundedness of the Hilbert transform, as well as the Hardy-Littlewood maximal operator.

The weighted variants of \eqref{eq:kp} and \eqref{eq:kp2} were proved by Naibo and Thomson \cite{NT} in non-endpoint cases, that is, $1<p_1,p_2<\infty$, when the weights satisfied the $A_p$ condition.  More specifically, these results extended \eqref{eq:kp2} to
\begin{align}\label{1.4}
\n{J^s (fg)}{L^p_{w}(\rr^n)} &\lesssim  \n{J^s f}{L^{p_1}_{w_{1}}(\rr^n)} \n{g}{L^{p_2}_{w_{2}}(\rr^n)} + \n{f}{L^{p_1}_{w_{1}}(\rr^n)}\n{J^s g}{L^{p_2}_{w_{2}}(\rr^n)},
\end{align}
for weights $w_1\in A_{p_1}$, $w_2\in A_{p_2}$, $w= w_1^{p/p_1}w_2^{p/p_2}$ and $s>\max\left\{n\pr{\f{\tau(w)}{p}-1},0\right\}$, where $\tau(w) = \inf \{\tau \in (1,\infty): w \in A_\tau\}$.  This problem has been previously investigated by Cruz-Uribe and Naibo \cite{CN}, which will be discussed in the context of our new results in Section~\ref{sec:inhom}.

In comparison, a more general class of multilinear weights (denoted by $\A_{\vec p},\vec{p}=(p_1,\ldots,p_m)$) and multilinear maximal function were introduced by Lerner, Ombrosi, P\'{e}rez, Torres, and Trujillo-Gonz\'{a}lez in \cite{LOPTT}. The $m$-linear maximal function is smaller than the $m$-fold product of the Hardy-Littlewood maximal functions, while the multilinear weights class $\A_{\vec{p}}$ is strictly larger than the $m$-fold tensor product $A_{p_1}\otimes\cdots \otimes A_{p_m}$.  They proved that $(w_1,\ldots, w_m)\in \A_{\vec{p}}$ is a necessary and sufficient condition for the  boundedness of the $m$-linear maximal operator on $L^{p_1}_{w_1}\times \cdots\times L^{p_m}_{w_m}, 1<p_1,\ldots,p_m<\infty$.  The authors also proved similar multilinear weighted inequalities for multilinear singular integrals (in particular, multilinear Coifman-Meyer multipliers) and related commutators.   Since the paraproduct decomposition of the LHS of \eqref{1.4} reduces the estimate to that of bilinear Coifman-Meyer multipliers, it is not difficult to deduce that \eqref{1.4} holds with $(w_1,w_2)\in \A_{p_1,p_2}$ as long as $s$ is sufficiently large (for instance, $s>2n+1$).  However, such argument fails to produce the sharp range of $s$ or the endpoint Lebesgue exponents.

In this article, we will investigate \eqref{1.4} with a specific class of weights, $\lan{x}^a$ for $a\geq 0$, which are known as power weights or polynomial weights.  We prove that \eqref{1.4} holds for all pair of polynomial weights, even beyond the $A_{p_1,p_2}$ range, and for the full range of Lebesgue exponents $\f{1}{2}\leq p \leq \infty$ and $1\le p_1,p_2\le \infty$, where the sharp range of $s$ is also demonstrated.  Our result portrays that there are substantial differences between weighted Kato-Ponce inequalities and typical multilinear weighted estimates in harmonic analysis.  A few of these differences are highlighted as follows:
\begin{itemize}
    \item The strong-type weighted inequality \eqref{1.4} remains true at the $L^1$ and $L^\infty$ endpoints,
    i.e. $L^1 \times L^{p_2} \to L^p$ and $L^\infty\times L^\infty \to L^\infty$; in particular, the strong type $L^{1}_{\lan{x}^{a_1}}\times L^1_{\lan{x}^{a_2}}\to L^{1/2}_{\lan{x}^{\frac{a_1+a_2}{2}}}$ estimate holds
    for all $a_1,a_2\in[0,\infty)$.
    \item Neither the Muckenhoupt $A_{p_1}\otimes A_{p_2}$ condition nor the multilinear $\A_{p_1,p_2}$ condition is  necessary for the weighted Kato-Ponce inequality \eqref{1.4}.
\end{itemize}

Our first main result is stated below.

\begin{thm}\label{main1}
Let  $\frac{1}{2}\le p \le \infty$, $1\le p_1,p_2\le \infty$, and $a, a_1,a_2\ge 0$ satisfy
\begin{equation}\label{eq:cond}
\f{1}{p} = \f{1}{p_1} + \f{1}{p_2}, \qquad \f{a}{p} = \f{a_1}{p_1}+ \f{a_2}{p_2},
\end{equation}
Then, if $s>\max\left\{0,n\pr{\f{1}{p} - 1}\right\}$ or $s\in 2\nn$,
\begin{align}\label{eq:1.3}
\n{J^s (fg)}{L^p_{ \langle x\rangle^a
}(\rr^n)} &\lesssim  \n{J^s f}{L^{p_1}_{\langle x \rangle^{a_1}}(\rr^n)} \n{g}{L^{p_2}_{\langle x \rangle^{a_2}}(\rr^n)} + \n{f}{L^{p_1}_{\langle x \rangle^{a_1}}(\rr^n)}\n{J^s g}{L^{p_2}_{\langle x \rangle^{a_2}}(\rr^n)}. 
\end{align}
Moreover, the range of $s$ above is optimal. More precisely, if $s\le \max\left\{0,n\pr{\f{1}{p} - 1}\right\}$ and $s\notin 2\nn,$ then \eqref{eq:1.3} fails in general.
\end{thm}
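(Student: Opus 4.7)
Setting $\alpha = \f{a}{p}$ and $\alpha_i = \f{a_i}{p_i}$, the hypothesis $\f{a}{p} = \f{a_1}{p_1} + \f{a_2}{p_2}$ becomes $\alpha = \alpha_1+\alpha_2$, and the target estimate takes the compact form
\[
\n{\langle\cdot\rangle^\alpha J^s(fg)}{L^p} \lesssim \n{\langle\cdot\rangle^{\alpha_1} J^s f}{L^{p_1}}\n{\langle\cdot\rangle^{\alpha_2} g}{L^{p_2}} + \n{\langle\cdot\rangle^{\alpha_1} f}{L^{p_1}}\n{\langle\cdot\rangle^{\alpha_2} J^s g}{L^{p_2}}.
\]
The plan is to combine the unweighted, endpoint-inclusive Kato--Ponce estimates of Bourgain--Li \cite{BL} and the authors' earlier work \cite{OW} with a weighted paraproduct analysis, exploiting the polynomial-weight split $\lan{x}^{\alpha_1+\alpha_2} \lesssim \lan{y}^{\alpha_1+\alpha_2} + \lan{x-y}^{\alpha_1+\alpha_2}$ (valid for all $\alpha_1,\alpha_2\ge 0$). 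This split allows one to transport polynomial weights freely through integral operators whose kernels have sufficient decay, a property enjoyed by Bessel and Littlewood--Paley kernels but not by the Hilbert transform, which is precisely why the Muckenhoupt condition can be dispensed with here.

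First, I would perform a Littlewood--Paley paraproduct decomposition
\[
J^s(fg) = \Pi_1(J^s f, g) + \Pi_2(f, J^s g) + R(f,g),
\]
where $\Pi_1$ and $\Pi_2$ are the low-high and high-low pieces (into each of which $J^s$ is absorbed onto the high-frequency factor via Fourier-support considerations), and $R$ is the diagonal high-high remainder. The sharp condition $s>\max\{0,n(\f{1}{p}-1)\}$ or $s\in 2\nn$ enters only in the summability of $R$, via the same mechanism as in the unweighted case. For each paraproduct piece I would then establish a weighted bilinear Coifman--Meyer estimate of the form
\[
\n{\langle\cdot\rangle^\alpha \Pi(u,v)}{L^p} \lesssim \n{\langle\cdot\rangle^{\alpha_1} u}{L^{p_1}}\n{\langle\cdot\rangle^{\alpha_2} v}{L^{p_2}},
\]
by distributing the left-hand weight pointwise through the kernel using the split above and then invoking the Hardy--Littlewood maximal and Fefferman--Stein vector-valued inequalities in weighted form; both remain valid for polynomial weights throughout the full range $\f{1}{2}\le p\le\infty$, $1\le p_1,p_2\le\infty$, since $\lan{\cdot}^a$ is locally integrable with at worst polynomial growth.

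The main obstacle will be the treatment of the endpoints $L^1\times L^{p_2}\to L^p$ and $L^\infty\times L^\infty \to L^\infty$, where the standard vector-valued and square-function methods break down. At these endpoints, I would adapt the low-to-high frequency exchange of Bourgain--Li and the $L^1$-endpoint argument of \cite{OW}, now carrying the weight $\lan{\cdot}^\alpha$ through the whole argument. The delicate point is to verify that the rearrangement of frequencies survives in the presence of the weight; the key observation is that $\lan{\cdot}^\alpha$ grows at most polynomially, so applying the pointwise split to the integral kernels introduces only polynomial losses in the frequency index, which are absorbed by the geometric summability intrinsic to the exchange. Here the Muckenhoupt theory offers no help, but the pointwise structure of polynomial weights is enough.

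Optimality of the range of $s$ follows by revisiting the counterexamples of \cite{GO,MS13a}. These use Schwartz (or compactly supported) data, for which all $\lan{\cdot}^{a_i}$-weighted Lebesgue norms are finite, so the same functions continue to rule out the forbidden parameters $s\le \max\{0, n(\f{1}{p}-1)\}$ with $s\notin 2\nn$ in the weighted setting; in particular, the presence of polynomial weights cannot enlarge the admissible range of $s$.
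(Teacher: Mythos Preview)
Your proposal has a genuine gap at its core. You plan to handle the paraproduct pieces by ``invoking the Hardy--Littlewood maximal and Fefferman--Stein vector-valued inequalities in weighted form'' and assert that ``both remain valid for polynomial weights throughout the full range''. This is false: the Hardy--Littlewood maximal operator is bounded on $L^p_{\lan{x}^a}$ if and only if $\lan{x}^a\in A_p$, i.e.\ $a<n(p-1)$, and the same restriction applies to Fefferman--Stein. The whole point of the theorem is to go \emph{beyond} the $A_p$ range, so any argument resting on maximal or square-function machinery cannot work for general $a_1,a_2\ge 0$. The paper replaces these tools entirely with a weighted Young inequality $\|f*g\|_{L^r_{\lan{x}^a}}\le\|f\|_{L^p_{\lan{x}^a}}\|g\|_{L^q_{\lan{x}^a}}$ (valid for all $a\ge0$) together with a Fourier-series expansion of the rescaled symbol, which converts multiplier bounds into weighted translation estimates rather than maximal estimates.

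The second, more subtle, gap is in your treatment of the sharp range of $s$. You say polynomial losses from the weight ``are absorbed by the geometric summability intrinsic to the exchange''. They are not: if one uses only the standard decay $|J^s_\de\varphi(x)|\lesssim\lan{x}^{-n-s}$, the weight shift $\lan{x}\le\lan{x-y}\lan{y}$ introduces a factor $\lan{2^{-k}\mm}^{a/p}$ in the Fourier-series argument that forces $s>\max\{\tfrac{a}{p}+n(\tfrac{1}{p}-1),0\}$ rather than the sharp $s>\max\{n(\tfrac{1}{p}-1),0\}$. The paper's new ingredient is an improved decay estimate $|J^s_\de\varphi(x)|\lesssim_M\lan{x}^{-M}+\lan{x}^{-n-s}e^{-c\de|x|}$; the extra exponential factor $e^{-c2^{-k}|\mm|}$ exactly cancels the weight shift $\lan{2^{-k}\mm}^{a/p}$ uniformly in $k$, which is what yields the sharp threshold. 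This same exponential decay is also needed on the negative side: in the dilation counterexample for $0<s\le n(\tfrac{1}{p}-1)$, the right-hand side involves $\|J^s_\de f\|_{L^{p_1}_{\lan{\de x}^{a_1}}}$, which is \emph{not} obviously uniformly bounded in $\de$ from Schwartz decay alone (since $J^s_\de f$ decays only like $\lan{x}^{-n-s}$), and again the exponential factor is what makes the bound uniform.
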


We note that Theorem~\ref{main1} considers
the full range of polynomial weights: $w_1=\langle x\rangle^{a_1}, w_2=\langle x\rangle^{a_2}$ for $a_1,a_2\in [0,\infty)$.
Since, \cite{Grafakos}, $\lan{x}^a \in A_p$ if and only if $-n< a < n(p-1)$ , \cite[Theorem 3.6]{LOPTT} and direct computations show that
\[
(\langle x\rangle^{a_1},\langle x\rangle^{a_2}) \in \A_{p_1,p_2}
   \iff (\langle x\rangle^{a_1},\langle x\rangle^{a_2})\in A_{p_1}\otimes A_{p_2} \iff a_i\in (-n,n(p_i-1)), i=1,2.
\]
In particular, these weights do not satisfy the standard Muckenhoupt weight condition, $(w_1,w_2)\in A_{p_1}\otimes A_{p_2}$ or the multilinear condition $(w_1,w_2)\in \A_{p_1,p_2}$ when either $a_1$ or $a_2$ is sufficiently large.
As mentioned above, Theorem~\ref{main1} portrays that the $\A_{p_1,p_2}$~condition is not necessary for the weighted Kato-Ponce inequality \eqref{1.4}.  Even when the weights satisfy the Muckenhoupt condition, Theorem \ref{main1} extends the result in \cite{NT} to allow for $p_1\in \{1,\infty\}$ or $p_2\in \{1,\infty\}$, and determines the optimal range of $s$, at least for the polynomial weights. When $p=p_1=p_2=\infty$, Theorem~\ref{main1} is included in \cite[Theorem 1.1]{BL} since $L^\infty_w = L^\infty$.

We will also establish the weighted Kato-Ponce commutator estimate as given below:
\begin{thm}\label{main-comm}
Let  $\frac{1}{2}\le p \le \infty$, $1\le p_1,p_2\le \infty$, and $a, a_1,a_2\ge 0$ satisfy \eqref{eq:cond}. \\
\begin{itemize}
\item If $\max\left\{0,n\pr{\f{1}{p} - 1}\right\}<s<1$ or $s\in 2\nn$,
\begin{align}
\n{J^s (fg)-f J^s g}{L^p_{ \langle x\rangle^a
}(\rr^n)} &\lesssim  \n{J^s f}{L^{p_1}_{\langle x \rangle^{a_1}}(\rr^n)} \n{g}{L^{p_2}_{\langle x \rangle^{a_2}}(\rr^n)} + \n{\nabla f}{L^{p_1}_{\langle x \rangle^{a_1}}(\rr^n)}\n{J^{s-1} g}{L^{p_2}_{\langle x \rangle^{a_2}}(\rr^n)}. \label{eq:new1.4}
\end{align}
\item If $s>\max\left\{1,n\pr{\f{1}{p} - 1}\right\}$ or $s\in 2\nn$,
\begin{align}
\n{J^s (fg)-f J^s g - s \na f \cdot \na J^{s-2} g}{L^p_{ \langle x\rangle^a
}(\rr^n)} &\lesssim  \n{J^s f}{L^{p_1}_{\langle x \rangle^{a_1}}(\rr^n)} \n{g}{L^{p_2}_{\langle x \rangle^{a_2}}(\rr^n)} + \n{\nabla f}{L^{p_1}_{\langle x \rangle^{a_1}}(\rr^n)}\n{J^{s-1}g}{L^{p_2}_{\langle x \rangle^{a_2}}(\rr^n)}. \label{eq:new1.5}
\end{align}
\end{itemize}

Moreover, if $0<s\leq n\pr{\f{1}{p}-1}$ and $s\not\in 2\nn$, both \eqref{eq:new1.4} and \eqref{eq:new1.5} fails.
\end{thm}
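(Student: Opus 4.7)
The plan is to cast both inequalities as bounds on bilinear Fourier multiplier operators and, via symbol calculus on a paraproduct-type decomposition, reduce them either to Theorem~\ref{main1} directly or to the weighted Coifman-Meyer-type multiplier estimates already embedded in its proof. Setting
\[
\sigma_1(\xi,\eta) := \lan{\xi+\eta}^s - \lan{\eta}^s, \qquad \sigma_2(\xi,\eta) := \lan{\xi+\eta}^s - \lan{\eta}^s - s(\xi\cdot\eta)\lan{\eta}^{s-2},
\]
the left-hand sides of \eqref{eq:new1.4} and \eqref{eq:new1.5} are precisely the bilinear Fourier multiplier operators $T_{\sigma_1}(f,g)$ and $T_{\sigma_2}(f,g)$. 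I would then introduce a smooth partition of unity $1 = \Phi_I + \Phi_{II} + \Phi_{III}$ on $(\xi,\eta)$-space adapted to the three paraproduct regions (I)~$|\eta|\gtrsim 4|\xi|$, (II)~$|\xi|\gtrsim 4|\eta|$, (III)~$|\xi|\sim|\eta|$, and treat the pieces of $T_{\sigma_j}$ separately.

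In regions (II) and (III) we have $\lan{\xi+\eta}\lesssim \lan{\xi}$, so $|\sigma_j\Phi_{II,III}|\lesssim \lan{\xi}^s$ with Coifman-Meyer-type derivative bounds, i.e.\ $\sigma_j\Phi_{II,III}=\lan{\xi}^s m(\xi,\eta)$ for a bounded Coifman-Meyer multiplier $m$. The corresponding pieces become $T_m(J^s f, g)$, whose $L^{p_1}_{\lan{x}^{a_1}}\times L^{p_2}_{\lan{x}^{a_2}}\to L^p_{\lan{x}^a}$ boundedness is precisely the weighted Coifman-Meyer-type estimate underlying Theorem~\ref{main1}, yielding control by $\n{J^s f}{L^{p_1}_{\lan{x}^{a_1}}}\n{g}{L^{p_2}_{\lan{x}^{a_2}}}$. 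The pieces of the subtracted terms $fJ^s g$ and $s\na f\cdot\na J^{s-2}g$ that live in regions (II)--(III) are bounded by the same mechanism.

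The crux is region (I). Using an integral-remainder Taylor expansion in $\xi$ around $0$, one writes
\[
\sigma_1\Phi_I(\xi,\eta) = \sum_\ell \xi_\ell\, \tilde m_\ell^{(1)}(\xi,\eta), \qquad \sigma_2\Phi_I(\xi,\eta) = \sum_{\ell,k} \xi_\ell \eta_k\, \tilde m_{\ell k}^{(2)}(\xi,\eta),
\]
where, on the support of $\Phi_I$ (so that $|\eta|\gtrsim |\xi|$), $\tilde m_\ell^{(1)} = \lan{\eta}^{s-1} m_\ell^{(1)}$ and $\tilde m_{\ell k}^{(2)} = \lan{\eta}^{s-2} m_{\ell k}^{(2)}$ with $m_\ell^{(1)}$, $m_{\ell k}^{(2)}$ standard Coifman-Meyer symbols. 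Consequently,
\[
T_{\sigma_1\Phi_I}(f,g) = \sum_\ell T_{m_\ell^{(1)}}(\p_\ell f,\, J^{s-1}g), \qquad T_{\sigma_2\Phi_I}(f,g) = \sum_{\ell,k} T_{m_{\ell k}^{(2)}}(\p_\ell f,\, \p_k J^{s-2}g),
\]
and the weighted Coifman-Meyer-type bounds yield the desired control by $\n{\na f}{L^{p_1}_{\lan{x}^{a_1}}}\n{J^{s-1}g}{L^{p_2}_{\lan{x}^{a_2}}}$, upon observing that $\n{\na J^{s-2}g}{L^{p_2}_{\lan{x}^{a_2}}}\lesssim \n{J^{s-1}g}{L^{p_2}_{\lan{x}^{a_2}}}$ via boundedness of Riesz-type multipliers $\p_k J^{-1}$ on $L^{p_2}_{\lan{x}^{a_2}}$. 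The integer case $s\in 2\nn$ is handled instead by a direct classical Leibniz expansion, with each resulting term controlled by Theorem~\ref{main1}.

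The main technical obstacle will be verifying the required weighted Coifman-Meyer multiplier bounds at the $L^1$, $L^\infty$, and $p<1$ endpoints, which fall outside the $\A_{p_1,p_2}$ regime where standard Calder\'on-Zygmund methods apply; these should follow from the low-to-high frequency exchange (originating in \cite{BL}) adapted to polynomial weights in the authors' proof of Theorem~\ref{main1}. The sharpness assertion is inherited from that of Theorem~\ref{main1}: placing $g$ at a fixed low frequency and $f$ at very high frequency renders $fJ^s g$ and $s\na f\cdot\na J^{s-2}g$ negligible compared to $J^s(fg)$, so the counterexamples from \cite{GO,MS13a} that break \eqref{eq:1.3} for $s\leq \max\{0,n(1/p-1)\}$, $s\notin 2\nn$, transfer directly to give failure of \eqref{eq:new1.4} and \eqref{eq:new1.5} in the same range.
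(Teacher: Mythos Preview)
Your reduction to ``weighted Coifman-Meyer-type multiplier estimates'' is the central gap. General bilinear Coifman-Meyer multipliers do \emph{not} map $L^\infty\times L^\infty\to L^\infty$ or $L^1\times L^{p_2}\to L^p$ in the strong sense (they give BMO and weak-type, respectively), and the low-to-high frequency exchange in the proof of Theorem~\ref{main1} does not establish any such general multiplier theorem. What that exchange actually exploits is the specific commutator structure: one writes $\sum_k J^s(S_{k-3}f\,\De_k g)=\sum_k [J^s,S_{k-3}f]\De_k g + f\,J^s\De_{>0}g - \sum_k \De_{>k-3}f\,J^s\De_k g$, and it is the Taylor expansion of the \emph{commutator} symbol (Proposition~\ref{prop:key2}) that produces the extra factor of $2^{-k}$ needed for summability at the endpoints. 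By rewriting regions (II) and (III) simply as $T_m(J^s f,g)$ for a Coifman-Meyer $m$, you have discarded exactly this structure, and the resulting operator cannot be bounded at the endpoints claimed in the theorem. The paper instead keeps the Littlewood-Paley decomposition $II_1+II_2+II_3+II_4$ and, for $s>1$, applies a second-order commutator expansion (Proposition~\ref{prop:key3}) to $[J^s,S_{k-3}f]\De_k g - s\na S_{k-3}f\cdot\na J^{s-2}\De_k g$, gaining $2^{k(s-2)}\n{|\na|^2 S_{k-3}f}{}$; this, together with a low-to-high switch in $II_2$, supplies the summability.

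A second concrete failure is your claim that $\n{\na J^{s-2}g}{L^{p_2}_{\lan{x}^{a_2}}}\lesssim \n{J^{s-1}g}{L^{p_2}_{\lan{x}^{a_2}}}$ via boundedness of $\p_k J^{-1}$. This operator is a Riesz-type multiplier, and its boundedness on $L^{p_2}_{\lan{x}^{a_2}}$ requires $\lan{x}^{a_2}\in A_{p_2}$, i.e.\ $a_2<n(p_2-1)$, which the theorem does not assume. This is precisely why the paper distinguishes \eqref{eq:new1.4} ($s<1$) from \eqref{eq:new1.5} ($s>1$); see the discussion preceding Corollary~\ref{cor}. Finally, your sharpness argument does not go through as stated: the counterexample for Theorem~\ref{main1} uses dilations $f_\de,g_\de$, and after rescaling the right-hand side picks up $\n{J^{s-1}_\de g}{}$ or $\n{\na J^{s-2}_\de g}{}$, which can blow up as $\de\to 0$ since $s-1$ may be negative. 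The paper circumvents this by choosing $f=g=\Psi(\cdot/\de)$ with $\wh{\Psi}$ supported on an annulus, so that $J^\al_\de\Psi$ decays rapidly uniformly in $\de$ for every $\al\in\rr$.
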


For the $L^\infty$ endpoint (that is, $L^\infty\times L^\infty \to L^\infty$), Theorem~\ref{main-comm} is included in \cite[Theorem 1.20]{BL}, with a minor difference in the correction term on the left hand side (LHS) of \eqref{eq:new1.5}.  On the other hand, the $L^1$ endpoint estimates (that is, $p_1=1$ or $p_2=1$) in Theorem~\ref{main-comm} are new even in the unweighted setting.  This theorem also portrays that the previously highlighted differences for \eqref{1.4} also stands for the Kato-Ponce commutator estimate, where strong type estimates hold at the $L^1$ and $L^\infty$ endpoints and the multilinear $\A_{p_1,p_2}$ condition is not necessary.

Distinction between the two inequalities \eqref{eq:new1.4} and \eqref{eq:new1.5} arises from the boundedness property of the Riesz transforms acting on $L^{p_2}_{\lan{x}^{a_2}}$.  In fact, the authors in \cite{BL} showed that \eqref{eq:new1.4} fails at the strong $L^\infty$ endpoint when $s>1$ and $s\not\in 2\nn$ due to the unboundedness of Riesz transforms in $L^\infty$.  On the other hand,  it is easy to see that \eqref{eq:new1.5} implies \eqref{eq:new1.4} if Riesz transforms are bounded in $L^{p_2}_{\lan{x}^{a_2}}$.   For $1<p<\infty$, it is known \cite{stein93} that the Riesz transforms are bounded on $L^p_w$ if and only if $w \in A_p$, which immediately leads to the following corollary.
\begin{cor}\label{cor}
Let  $\frac{1}{2}< p < \infty$, $1\le p_1\le \infty,1<p_2< \infty$, and let $0\le a_1<\infty,0\le a_2\le n(p_2-1),$ and $a/p=a_1/p_1+a_2/p_2$. If  $s>\max\left\{1,n\pr{\f{1}{p} - 1}\right\}$ {and $s\not\in 2\nn$}, then \eqref{eq:new1.4} holds.
\end{cor}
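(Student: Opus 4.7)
The plan is to deduce \eqref{eq:new1.4} from Theorem~\ref{main-comm}(ii) by absorbing the extra corrector $s\,\na f\cdot \na J^{s-2}g$ appearing on the left-hand side of \eqref{eq:new1.5} into the right-hand side. Since the present hypothesis $s>\max\{1,n(1/p-1)\}$ with $s\notin 2\nn$ matches that of Theorem~\ref{main-comm}(ii), the estimate \eqref{eq:new1.5} is immediately available. The triangle inequality then gives
\[
\n{J^s(fg)-f J^s g}{L^p_{\lan{x}^a}}
\le \n{J^s(fg)-f J^s g - s\,\na f\cdot \na J^{s-2}g}{L^p_{\lan{x}^a}}
+ s\,\n{\na f\cdot \na J^{s-2}g}{L^p_{\lan{x}^a}},
\]
and the first term on the right is already controlled by the desired right-hand side of \eqref{eq:new1.4}. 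Thus the task reduces to proving
\[
\n{\na f\cdot \na J^{s-2}g}{L^p_{\lan{x}^a}}\lesssim \n{\na f}{L^{p_1}_{\lan{x}^{a_1}}}\n{J^{s-1}g}{L^{p_2}_{\lan{x}^{a_2}}}.
\]

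Next I would invoke the weighted H\"older inequality. The hypothesis $a/p=a_1/p_1+a_2/p_2$ gives the pointwise factorization $\lan{x}^a = \lan{x}^{a_1 p/p_1}\lan{x}^{a_2 p/p_2}$, which combined with $1/p=1/p_1+1/p_2$ further reduces matters to the single-function estimate
\[
\n{\na J^{s-2}g}{L^{p_2}_{\lan{x}^{a_2}}}\lesssim \n{J^{s-1}g}{L^{p_2}_{\lan{x}^{a_2}}}.
\]
For each $1\le j\le n$ I would then write $\p_j J^{s-2}g = T_j\, J^{s-1}g$, where $T_j$ is the Fourier multiplier with symbol $m_j(\xi)=i\xi_j\lan{\xi}^{-1}$. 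A direct check shows that $m_j\in C^\infty(\rr^n)$ and $|\p^\alpha m_j(\xi)|\lesssim (1+|\xi|)^{-|\alpha|}$ for every multi-index $\alpha$, so $T_j$ is a classical Calder\'on--Zygmund operator. Under the hypothesis $\lan{x}^{a_2}\in A_{p_2}$ (equivalent, by the discussion preceding the corollary, to $0\le a_2<n(p_2-1)$), the classical weighted Calder\'on--Zygmund theory \cite{stein93} delivers the boundedness $\n{T_j h}{L^{p_2}_{\lan{x}^{a_2}}}\lesssim \n{h}{L^{p_2}_{\lan{x}^{a_2}}}$, completing the plan.

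The argument is essentially a one-line reduction to Theorem~\ref{main-comm}(ii), and no new paraproduct or endpoint machinery needs to be introduced. The only ingredient that requires any care is recognizing $\p_j J^{-1}$ as an $A_{p_2}$-bounded Fourier multiplier, and even that step is routine once one verifies the Mihlin--H\"ormander estimates; consequently I anticipate no genuine obstacle in carrying out the plan.
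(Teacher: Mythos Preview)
Your proposal is correct and matches the paper's approach: the paper derives the corollary in one sentence before its statement, observing that \eqref{eq:new1.5} implies \eqref{eq:new1.4} whenever the Riesz transforms are bounded on $L^{p_2}_{\lan{x}^{a_2}}$, i.e.\ when $\lan{x}^{a_2}\in A_{p_2}$. Your write-up is slightly more careful in that you identify the relevant operator as $\p_j J^{-1}$ with symbol $i\xi_j\lan{\xi}^{-1}$ (a Mihlin multiplier) rather than the literal Riesz transform, and you note that the quasi-triangle inequality is needed when $p<1$; both refinements are harmless and the argument is the intended one.
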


We will also consider an extension of \eqref{1.4} to allow mixed-norms and biparameter fractional derivatives.  Given indices $\dot{p},\ddot{p} \in (0,\infty]$ and $\dot{a},\ddot{a} \in [0,\infty)$ as well as dimensional indices $\dot{n}$, $\ddot{n}$, we will define the mixed $L^{\ddot{p}}_{\ddot{a}} L^{\dot{p}}_{\dot{a}}$ (quasi-)norm:
\[
\|f\|_{L^{\ddot{p}}_{\ddot{a}} L^{\dot{p}}_{\dot{a}}} = \left(\int_{\rr^{\ddot{n}}}\left(\int_{\rr^{\dot{n}}} |f(x,x')|^{\dot{p}} \langle x \rangle^{\dot{a}} d\dot{x}\right)^{\f{\ddot{p}}{\dot{p}}} \langle \ddot{x}\rangle^{\ddot{a}} d\ddot{x}\right)^{\f{1}{\ddot{p}}}.
\]

The following two theorems are extensions of our results in \cite{OW} to the weighted mixed norm setting.  We first state the mixed norm variant:

\begin{thm}\label{main2}
Let $\dot{p}, \ddot{p} \in \left[\f{1}{2},\infty\right]$,  $\dot{a}, \ddot{a}, \dot{a_1},\ddot{a_1},\dot{a_2},\ddot{a_2}\in [0,\infty)$, and $\dot{p}_1, \ddot{p}_1,\dot{p}_2,\ddot{p}_2\in [1,\infty]$ satisfy
\begin{equation}\label{eq:cond2}
\f{1}{\dot{p}} = \f{1}{\dot{p}_1} + \f{1}{\dot{p}_2},\qquad \f{1}{\ddot{p}} = \f{1}{\ddot{p}_1} + \f{1}{\ddot{p}_2},\qquad\f{\dot{a}}{\dot{p}} = \f{\dot{a}_1}{\dot{p}_1} + \f{\dot{a}_2}{\dot{p}_2},\qquad \f{\ddot{a}}{\ddot{p}} = \f{\ddot{a}_1}{\ddot{p}_1} + \f{\ddot{a}_2}{\ddot{p}_2},
\end{equation}
We denote $p^* = \min\{1,p, p'\}$.  If $s>\max\left\{0,(\dot{n}+\ddot{n})\pr{\f{1}{p^*} - 1}\right\}$ or $s\in 2\nn$,
\begin{align}
\n{J^s (fg)}{\lp } &\lesssim  \n{J^s f}{\lpa} \n{g}{\lpb } + \n{f}{\lpa }\n{J^s g}{\lpb }. \label{eq:1.3b}
\end{align}
Moreover, the range of $s$ above is optimal. More precisely, if $s\le \max\{0,(\dot{n}+\ddot{n})\pr{\f{1}{p^*} - 1}\}$ and $s\notin 2\nn,$ then the inequality above fails in general.
\end{thm}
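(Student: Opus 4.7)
The plan is to adapt the approach from Theorem~\ref{main1} to the biparameter setting by combining the weighted arguments for polynomial weights with the mixed-norm analysis from our previous work \cite{OW}. The main step is to decompose $fg$ via a biparameter paraproduct on $\rr^{\dot n}\times\rr^{\ddot n}$ and treat the off-diagonal and diagonal pieces by separate methods.

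First, I would introduce a biparameter Littlewood-Paley decomposition $P_{j,k}$ and expand $fg$ into paraproducts according to the relative sizes of the frequencies of $f$ and $g$ in each coordinate direction. For those pieces where, in at least one variable, one factor has strictly higher frequency than the other, the fractional derivative $J^s$ falls naturally onto the high-frequency factor and the remaining bilinear symbol is a biparameter Coifman-Meyer multiplier. These contributions I would handle by a biparameter weighted vector-valued maximal inequality in mixed-norm spaces: because the weight $\langle \dot x\rangle^{\dot a}\langle \ddot x\rangle^{\ddot a}$ is a tensor product, iteration in each coordinate reduces the required bound to the one-parameter weighted estimate already established in the proof of Theorem~\ref{main1}. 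Throughout, the polynomial weight is moved across convolution kernels using the Peetre-type inequality $\langle x\rangle^a\lesssim \langle x-y\rangle^a+\langle y\rangle^a$ for $a\ge 0$, applied separately in each coordinate, so that kernels with fast enough decay in $y$ absorb the weight loss.

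For the diagonal paraproduct, where the frequencies of $f$ and $g$ are comparable in both directions, the vector-valued maximal approach breaks down at the $L^1$ and $L^\infty$ endpoints, so I would adapt the Bourgain-Li low-to-high frequency exchange of \cite{BL,OW} to the mixed-norm weighted setting in order to recover summability when some $\dot p_i$ or $\ddot p_i$ equals $1$ or $\infty$; the $L^1\times L^1 \to L^{1/2}$ quasi-Banach endpoint corresponding to $p=1/2$ is the most delicate instance. Sharpness of the threshold $s>\max\{0,(\dot n+\ddot n)(1/p^*-1)\}$ outside $2\nn$ then follows from the unweighted mixed-norm counterexamples in \cite{OW} by specializing to $\dot a_i=\ddot a_i=0$. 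The main obstacle is exactly this diagonal paraproduct at the corner endpoints: the Bourgain-Li telescoping produces partial reconstructions of $f$ and $g$, and the polynomial weight must be shuttled across those pieces without either exceeding the admissible weight range on an individual factor or destroying the geometric summation in the frequency parameter. A careful iteration of the biparameter Peetre inequality, combined with mixed-norm weighted Hardy-Littlewood bounds applied one variable at a time, should close the argument, but the bookkeeping at the endpoints will require the most care.
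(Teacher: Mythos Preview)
Your plan misidentifies the structure of the problem. In Theorem~\ref{main2} the operator $J^s$ is the \emph{single-parameter} Bessel potential on $\rr^{\dot n+\ddot n}$, with symbol $\langle(\dot\xi,\ddot\xi)\rangle^s$; it does not tensorize, and a biparameter Littlewood--Paley decomposition $P_{j,k}$ is neither needed nor natural here. The paper proceeds exactly as in Theorem~\ref{main1}: one runs the \emph{one-parameter} paraproduct decomposition on $\rr^{\dot n+\ddot n}$ into $I_1,\dots,I_4$, and simply replaces every occurrence of $L^p_{\langle x\rangle^a}$ by the mixed norm $\lp$. All of the lemmas in Section~\ref{sec:inhom} (weighted Young, Bernstein, Lemma~\ref{le:main}, Lemma~\ref{le:commutator}) extend verbatim to the mixed-norm setting once one uses the subadditivity \eqref{eq:mixedtri} with exponent $p^*=\min\{1,\dot p,\ddot p\}$ in place of $\bar p$. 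The biparameter machinery you describe is what is required for Theorem~\ref{main3}, not for this statement.

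There is also a genuine gap in the tools you invoke. You propose to handle the off-diagonal pieces via a ``biparameter weighted vector-valued maximal inequality'' and to close the diagonal piece using ``mixed-norm weighted Hardy--Littlewood bounds applied one variable at a time.'' But the entire point of the paper is that the polynomial weights $\langle\dot x\rangle^{\dot a_i}$, $\langle\ddot x\rangle^{\ddot a_i}$ are allowed to lie \emph{outside} the Muckenhoupt class, and for such weights the Hardy--Littlewood maximal function is \emph{not} bounded on $L^{p_i}_{\langle x\rangle^{a_i}}$. The proof of Theorem~\ref{main1} never uses maximal or square-function estimates; it replaces them with the Fourier-series expansion of the symbol (Lemma~\ref{le:main}) together with the exponential decay from Lemma~\ref{le:jsde}, which is what allows the translated pieces to be summed despite the lack of translation invariance of the weighted norm. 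Your scheme, as written, would collapse precisely at the weights the theorem is designed to cover.
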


This mixed norm inequality with the homogeneous differential operator $D^s$ has been studied by Torres and Ward \cite{TW} for the Banach range of indices and then by Hart, Torres and the second author of this manuscript \cite{HTW} for the quasi-Banach range of indices.   The extension to the full range of Lebesgue indices in unweighted setting was recently obtained by the authors of this manuscript in \cite{OW}.

Finally, the biparameter variant of \eqref{1.4} is formulated as follows.
\begin{thm}\label{main3}
Let the parameters be as defined in Theorem~\ref{main2}.  Let  $\dot{s}>\max\left\{0,\dot{n}\pr{\f{1}{\bar{p}} - 1}\right\}$ or $\dot{s}\in 2\nn$, where $\bar{p} := \min\{1,\dot{p}\}$.  Also, let  $\ddot{s}>\max\left\{0,\ddot{n}\pr{\f{1}{p^*} - 1}\right\}$ or $\ddot{s}\in 2\nn$, where $p^* = \min \{1,\dot{p},\ddot{p}\}$.  Define $J^{\dot{s}}$ to be the operator acting on $\dot{x}\in \rr^{\dot{n}}$ and  $J^{\ddot{s}}$ to be the operator acting on $\ddot{x}\in \rr^{\ddot{n}}$.  Then,
\begin{align}
\n{J^{\dot{s}}J^{\ddot{s}} (fg)}{\lp } &\lesssim  \n{J^{\dot{s}}J^{\ddot{s}} f}{\lpa}\n{g}{\lpb} + \n{J^{\dot{s}} f}{\lpa}\n{J^{\ddot{s}}g}{\lpb} \label{eq:1.3c}\\
&\quad + \n{J^{\ddot{s}}f}{\lpa}\n{J^{\dot{s}}g}{\lpb}+ \n{f}{\lpa}\n{J^{\dot{s}}J^{\ddot{s}} g}{\lpb}.\notag
\end{align}
\end{thm}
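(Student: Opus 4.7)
The plan is to adapt the biparameter paraproduct technique from our earlier unweighted work \cite{OW} to the weighted mixed-norm setting, using Theorem~\ref{main2} (the single-operator weighted mixed-norm Kato-Ponce) as the main input. Since $J^{\dot s}$ and $J^{\ddot s}$ act on disjoint coordinates, they commute, and the bi-derivative structure factorizes naturally along the two directions.

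I would first introduce smooth Littlewood-Paley projections $\Delta^{\dot x}_k$ and $\Delta^{\ddot x}_k$ acting separately on $\dot x$ and $\ddot x$, and decompose
\[
fg \;=\; \sum_{\alpha,\beta \in \{LH,HL,HH\}} \Pi^{\alpha,\beta}(f,g),
\]
where $\alpha$ encodes the relative $\dot x$-frequency pattern between $f$ and $g$ (low of $f$ vs.\ high of $g$, high of $f$ vs.\ low of $g$, or high-high), and $\beta$ plays the analogous role in $\ddot x$. For the four off-diagonal blocks with $\alpha,\beta \in \{LH,HL\}$, standard Coifman-Meyer multiplier calculus transfers the $J^{\dot s}$ derivative onto whichever factor is high in $\dot x$ and the $J^{\ddot s}$ derivative onto whichever factor is high in $\ddot x$, modulo smooth multiplier remainders. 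For example, in the $(LH,HL)$ block,
\[
J^{\dot s} J^{\ddot s} \Pi^{LH,HL}(f,g) \;=\; T\!\left(J^{\ddot s}f,\; J^{\dot s}g\right),
\]
for a bilinear operator $T$ with product-localized smooth symbol, whose weighted mixed-norm bound follows from Theorem~\ref{main2} via a direct bilinear-multiplier argument (or, equivalently, by applying Theorem~\ref{main2} separately in $\dot x$ and $\ddot x$ with the other variable treated as a parameter). These four off-diagonal pieces produce exactly the four terms on the right side of \eqref{eq:1.3c}, and the weight-exponent compatibility in \eqref{eq:cond2} ensures that H\"older's inequality distributes $\lan{\dot x}^{\dot a}$ and $\lan{\ddot x}^{\ddot a}$ correctly across the factors.

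The remaining five pieces involve at least one $HH$ index: within each diagonal Littlewood-Paley block the derivative in that direction may fall on either factor, so the contribution reduces to summing a geometric series over scales. This is precisely where the strict range hypotheses $\dot s > \dot n(1/\bar p -1)$ and $\ddot s > \ddot n(1/p^* -1)$ (or the evenness condition) become indispensable, mirroring the one-parameter proof of Theorem~\ref{main2}. The summations are closed via weighted vector-valued maximal and Fefferman-Stein-type inequalities for polynomial weights in the mixed-norm space $\lp$.

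The hard part will be the doubly-diagonal $(HH,HH)$ corner, which requires summing over a two-dimensional lattice of scales while simultaneously preserving compatibility with the polynomial weights in both variables. My approach is to iterate the low-to-high frequency exchange of Bourgain-Li \cite{BL}, already adapted to strong endpoints in \cite{OW}, in both variables jointly, and to handle the weights via the splitting $\lan{x}^a \lesssim \lan{\dot x}^a + \lan{\ddot x}^a$ together with the fact that $\lan{x}^a \in A_\infty$ for every $a\ge 0$; this is what should permit the weighted square-function estimates needed to close the double summation.
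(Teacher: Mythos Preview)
Your paraproduct decomposition and the idea of iterating the Bourgain--Li low-to-high exchange in both variables are essentially the same organization as in the paper and in \cite{OW}. The genuine gap is in how you propose to close the summations over the diagonal and doubly-diagonal pieces.

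You write that the $HH$ blocks will be handled by ``weighted vector-valued maximal and Fefferman--Stein-type inequalities for polynomial weights'' and that the $(HH,HH)$ corner will use ``$\lan{x}^a \in A_\infty$'' to justify ``weighted square-function estimates.'' This is exactly the machinery the theorem is designed to avoid. Square-function and vector-valued maximal inequalities in $L^p_w$ require $w\in A_p$, not merely $A_\infty$; for $\lan{x}^{a}$ with $a\ge n(p-1)$ they are simply false. Moreover, even when the weights happen to lie in $A_p$, these tools fail at the endpoints $p_i\in\{1,\infty\}$, which are explicitly included in the hypotheses via Theorem~\ref{main2}. So your proposed closure would at best recover the non-endpoint, $A_p$-weighted case already in \cite{NT}, and would not prove the statement as written.

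What the paper does instead is replace all square-function/maximal arguments by the Fourier-series mechanism of Lemma~\ref{le:main}, extended in Lemma~\ref{mixed-main} to the biparameter mixed-norm setting: one expands the compactly supported symbols $\si_k$, $\si_{k'}'$ in Fourier series, so the multiplier becomes a sum of translates, and the translation cost $\lan{2^{-k}\mm}^{a/p}$ is absorbed by the exponential factor $e^{-c\,2^{-k}|\mm|}$ coming from Lemma~\ref{le:jsde}. This works for every polynomial weight and at every Lebesgue endpoint, and gives the sharp threshold on $\dot s,\ddot s$. The biparameter commutator terms arising from the low-to-high exchange (your $(HH,HH)$ corner) are handled not by square functions but by the explicit bilinear operators $T_1^{k,k'}$, $T_2^{k,k'}$ in Proposition~\ref{prop:multi2}, whose symbols are again expanded in double Fourier series with rapidly decaying coefficients. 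If you replace your appeal to $A_\infty$ and Fefferman--Stein by this Fourier-series/translation argument, the rest of your outline goes through as in \cite{OW}.
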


The biparameter Kato-Ponce inequality has been extensively studied in the literature, where often the homogeneous operator $D^s$ is used instead of $J^s$.  Such inequality was first considered by Muscalu, Pipher, Tao and Thiele \cite{MPTT} and also by Grafakos and the first author of this manuscript \cite{GO} in the unweighted Lebesgues spaces without the additional complexity of mixed-norm spaces. This result was then used by Kenig \cite{K} in the analysis of the KP-I equation. Benea and Muscalu \cite{BM1} studied the biparameter inequality in context of the mixed Lebesgue spaces when the target space, $L^{p'}L^p$, lied within the Banach range. This result was extended by Di Plinio and Ou \cite{DO} and also by Benea and Muscalu \cite{BM2} to allow for quasi-Banach Lebesgue exponents for the target space.  Finally, the extension to the full range of Lebesgue indices was recently obtained in \cite{OW}.

This article is organized as follows.  In Section~\ref{sec:prelim}, we introduce standard notations.  Section~\ref{sec:inhom} contains the proof of Theorem~\ref{main1}, as well as a few technical lemmas which will be useful in subsequent sections.  This section is further divided into subsections: Subsection~\ref{sec:inhom1} contains technical lemmas; Subsection~\ref{sec:inhom2} contains the proof of the positive direction in Theorem~\ref{main1}; Section~\ref{sec:inhom3} contains the proof of the negative direction in Theorem~\ref{main1}.   Section~\ref{sec:com} contains the proof of Theorem~\ref{main-comm} as well as Corollary~\ref{cor}.  Subsections are organized as follows: Subsection~\ref{sec:com1} contains technical lemmas; Subsection~\ref{sec:com2} contains the proof of \eqref{eq:new1.4};  Subsection~\ref{sec:com3} contains both the proof of \eqref{eq:new1.5}; Subsection~\ref{sec:com4} contains the proof of the negative direction of Theorem~\ref{main-comm}.  Finally, Section~\ref{sec:multi} contains necessary and non-trivial modifications to be made from the proof in \cite{OW} to establish Theorems~\ref{main2} and \ref{main3}.

\section{Preliminary Notations} \label{sec:prelim}
In this section, we introduce  notations  which we will use in the sequel.

For two positive quantities $A$ and $B$, we write $A\lesssim B$ if there exists an absolute constant $C$, which does not depend on main parameters, such that $A\leq C B$.
 When this constant depends on some parameters, for instance $\ve$, we will sometimes indicate this dependence by writing $A \lesssim_\ve B$.   $A\sim B$ means that both $A\lesssim B$ and $B\lesssim A$ holds. Let  $A\gg B$ mean  $A\ge CB$ for some large positive constant $C$. 

Let $C_0^\infty(\rr^n)$ be the space of smooth functions with compact support, and $\S(\rr^n)$ be the Schwartz class of smooth, rapidly decreasing functions.
For any $f\in \cs(\rr^n)$, define the Fourier transform by $\wh{f}(\xi)= \cf[f](\xi) := \int_{\rr^n} f(x) e^{ - i x\cdot \xi}\, dx$. For $s\in \rr$ and $\xi \in \rr^n$, we denote $\langle \xi \rangle^s=(1+|\xi|^2)^{s/2}$. The fractional differential operators $D^s$ and $J^s$ are defined respectively by  $\wh{D^s f}(\xi) := |\xi|^s \wh{f}(\xi)$ and $\wh{J^sf}(\xi):=\langle \xi \rangle^s \wh{f}(\xi)$ for $s\in \rr$ and $f\in \S(\rr^n)$.

Next, we define the notations for the Littlewood-Paley frequency localization operators $\De_j$ and $S_j$.  We begin with a particular radial non-negative smooth function $\Phi$ on $\rr^n$, such that $\Phi (\xi) = 0$ when $|\xi|\geq 2$, and $\Phi(\xi) =1$ when $|\xi|\leq 1$.  Also, define $\Psi(\xi) = \Phi(\xi) - \Phi(2\xi)$.  With these functions, we define the  frequency localization operators:
\begin{align*}
    \wh{\De_j f}(\xi)&=\wh{\De_j^{\Psi} f}(\xi)  := \Psi( 2^{-j} \xi) \wh{f}(\xi),\\
    \wh{S_j f}(\xi) &=\wh{S_j^{\Phi} f}(\xi) := \Phi(2^{-j} \xi) \wh{f}(\xi),\\
    \wt{\De_j} f  &:= \sum_{k:|k-j|<3} \De_k f.
\end{align*}
We will usually omit the superscripts $\Phi$ and $\Psi$ when  used for localization, since these functions are implicit in the operator's definitions.  Sometimes, we will need to replace these functions with  their slightly modified versions.  In such cases, the functions in the superscript will indicate the expression used for localization.  For instance, $S^\phi$ means that the function $\phi$ replaces $\Phi$ in the definition of multiplier.  Still, the support of $\phi$ will be restricted to a ball centered at the origin of a size equivalent to 1, while it may not equal to 1 on a smaller ball.

The following reproducing formula holds (cf. \cite{Grafakos2})
\begin{equation*}
f(x)=S_0 f(x) + \sum_{k\in \nn} \De_j f(x),\qquad \forall \ f\in \S(\rr^n),
\end{equation*}
where the above series converges in $\S(\rr^n).$ By duality, the above formula holds also for $f\in \S'(\rr^n)$,
and the series converges in the topology of $\S'(\rr^n).$

\section{Inhomogeneous Kato-Ponce Inequality with Polynomial Weights}\label{sec:inhom}

\subsection{Lemmas}\label{sec:inhom1}

In this section, we will present technical tools necessary to work in polynomially-weighted Lebesgue spaces.  A few of the lemmas given in this section may already exist in the literature, but we include their proofs unless an explicit reference is available.

We begin with Young's inequality for polynomially-weighted $L^p$ spaces. It is worthwhile to point out that this inequality does not require the Muckenhoupt weight condition.

\begin{lem}[Weighted Young's inequality]\label{Young}
Let $1\le p,q,r\le \infty$ satisfy $1/r+1=1/p+1/q.$ Then for all $a\ge 0,$
$$\|f*g\|_{L^r_{\langle x \rangle^a}}\le \|f\|_{L^p_{\langle x \rangle^a}} \|g\|_{L^q_{\langle x \rangle^a}}.$$
\end{lem}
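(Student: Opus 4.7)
The plan is to reduce the weighted inequality to the classical (unweighted) Young's inequality by redistributing the weight pointwise inside the convolution integral, using an elementary submultiplicative-type bound for $\langle \cdot \rangle$.

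The starting point is the elementary inequality
\[
\langle x \rangle^2 = 1+|x|^2 \le 1 + 2|x-y|^2 + 2|y|^2 \le 2\langle x-y\rangle^2\langle y\rangle^2,
\]
so $\langle x \rangle \le \sqrt{2}\,\langle x-y\rangle\,\langle y\rangle$. Since $a\ge 0$, this yields $\langle x\rangle^{a/r} \lesssim \langle x-y\rangle^{a/r}\langle y\rangle^{a/r}$. Next, the Young condition $1/p+1/q = 1/r+1$ with $p,q\ge 1$ forces $r\ge \max(p,q)$, hence $a/r \le a/p$ and $a/r \le a/q$; combining with $\langle \cdot \rangle \ge 1$ gives the pointwise bound
\[
\langle x\rangle^{a/r} \lesssim \langle x-y\rangle^{a/p}\,\langle y\rangle^{a/q}.
\]

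Inserting this into $|f*g(x)|\le \int |f(x-y)||g(y)|\,dy$ and multiplying through by $\langle x \rangle^{a/r}$ gives
\[
\langle x\rangle^{a/r}|f*g(x)| \lesssim \int \big(\langle x-y\rangle^{a/p}|f(x-y)|\big)\big(\langle y\rangle^{a/q}|g(y)|\big)\,dy = (\tilde f * \tilde g)(x),
\]
where $\tilde f(x) := \langle x\rangle^{a/p}|f(x)|$ and $\tilde g(x) := \langle x\rangle^{a/q}|g(x)|$. Applying the classical Young inequality for the exponents $p,q,r$ satisfying $1/r+1 = 1/p+1/q$ yields
\[
\|\langle x\rangle^{a/r}(f*g)\|_{L^r} \lesssim \|\tilde f\|_{L^p}\|\tilde g\|_{L^q} = \|f\|_{L^p_{\langle x \rangle^a}}\|g\|_{L^q_{\langle x \rangle^a}},
\]
which is precisely the stated inequality.

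There is not really a hard step here: once one notices the pointwise bound $\langle x\rangle^{a/r}\lesssim \langle x-y\rangle^{a/p}\langle y\rangle^{a/q}$ (which uses crucially $a\ge 0$, together with $r\ge \max(p,q)$ from the Young scaling), the rest is the standard convolution inequality. The only thing to be careful about is verifying that the exponent redistribution really does match the weights appearing in the definition of $L^p_{\langle x\rangle^a}, L^q_{\langle x\rangle^a}, L^r_{\langle x\rangle^a}$; no $A_p$-type hypothesis enters, consistent with the remark preceding the lemma.
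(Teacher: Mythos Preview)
Your proof is correct and establishes the lemma with an implicit constant depending only on $a$ (which is all that is ever used downstream). The route, however, differs from the paper's. The paper reproves Young's inequality from scratch via the standard three-term H\"older splitting
\[
|f(y)||g(x-y)| = |f(y)|^{p/q'}\bigl(|f(y)|^{p/r}|g(x-y)|^{q/r}\bigr)|g(x-y)|^{q/p'},
\]
takes the $L^r_{\langle x\rangle^a}$ norm, and after Fubini inserts the weight via $\langle x\rangle^a \le \langle y\rangle^a\langle x-y\rangle^a$. You instead reduce directly to the \emph{unweighted} Young inequality by the pointwise redistribution $\langle x\rangle^{a/r}\lesssim \langle x-y\rangle^{a/p}\langle y\rangle^{a/q}$, which requires the additional (but elementary) observation $r\ge\max(p,q)$ forced by the scaling relation. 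Your argument is a bit more transparent and avoids rerunning the H\"older computation; the paper's argument is the classical proof with the weight carried along and does not need the exponent comparison. Both rest on the same submultiplicative bound for $\langle\cdot\rangle$, and both in fact yield $\lesssim$ rather than the literal $\le$ in the statement (the inequality $\langle x\rangle\le\langle y\rangle\langle x-y\rangle$ invoked in the paper also fails without a constant, e.g.\ take $y=x/2$ with $|x|$ small), which is harmless for every subsequent use.
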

\begin{proof}
In this proof, we temporarily use $p'$ to denote the conjugate exponent of $p$ for $1\le p\le \infty$, which is given by $1/p'+1/p=1.$

If $q=\infty,$ then $r=\infty$ and $p=1$, and the conclusion hold trivially.  We omit the details in this case.

If $q<\infty,$ then by our condition on the indices can be written as
\[
\frac{1}{q'}+\frac{1}{r}+\frac{1}{p'}=1,\qquad \frac{p}{r}+\frac{p}{q'}=1,\qquad \frac{q}{r}+\frac{q}{p'}=1.
\]

H\"{o}lder's inequality with exponents $q',r,p'$ yields
\begin{align*}
  (|f|*|g|)(x) & =\int_{\rr^n} |f(y)|^{\frac{p}{q'}} (|f(y)|^{\frac{p}{r}} |g(x-y)|^{\frac{q}{r}}) |g(x-y)|^{\frac{q}{p'}} dy\\
 & \le \left(\int_{\rr^n} |f(y)|^p |g(x-y)|^q dy\right)^{\frac{1}{r}}\|f\|_{L^{p}}^{\frac{p}{q'}}  \|g\|_{L^{q}}^{\frac{q}{p'}}.
\end{align*}
Now taking the $L^r_{\langle x \rangle^a}$ norm, applying Fubini's theorem and the inequality
$$\langle x\rangle\le \langle y\rangle \langle x-y\rangle,\qquad \forall\ x,y\in \rr^n,$$
 we deduce that
\begin{align*}
  \left\||f|*|g|\right\|_{L^r_{\langle x \rangle^a}} & \le \left(\int_{\rr^n\times \rr^n} |f(y)|^p |g(x-y)|^q dy \langle x\rangle^a dx\right)^{\frac{1}{r}}\|f\|_{L^{p}}^{\frac{p}{q'}}  \|g\|_{L^{q}}^{\frac{q}{p'}} \\
  &\le \left(\int_{\rr^n\times \rr^n} |f(y)|^p\langle y\rangle^a |g(x-y)|^q  \langle x-y\rangle^a dx dy\right)^{\frac{1}{r}}\|f\|_{L^{p}}^{\frac{p}{q'}}  \|g\|_{L^{q}}^{\frac{q}{p'}} \\
  &= \|f\|_{L^{p}_{\langle x \rangle^a}}^{\frac{p}{r}}  \|g\|_{L^{q}_{\langle x \rangle^a}}^{\frac{q}{r}} \|f\|_{L^{p}}^{\frac{p}{q'}}  \|g\|_{L^{q}}^{\frac{q}{p'}}\\
   &= \|f\|_{L^{p}_{\langle x \rangle^a}} \|g\|_{L^{r}_{\langle x \rangle^a}},
\end{align*}
concluding the proof.
\end{proof}

Lemma~\ref{Young} leads to the $L^p_{\lan{x}^{a}}$ boundedness of Littlewood-Paley operators for $p\in [1,\infty]$, which is analogous to the corresponding estimates for unweighted spaces.

\begin{lem}[Weighted bounds for the Littlewood-Paley operators]\label{lp}
Let $a\ge 0$ and $p\in [1,\infty]$. We have the following uniform bound for the Littlewood-Paley operators:
$$\sup_{k\in \nn} \pr{\|\De_k f\|_{L^p_{\langle x \rangle^a}}+\|S_k f\|_{L^p_{\langle x \rangle^a}}} \lesssim \|f\|_{L^p_{\langle x \rangle^a}}.$$
\end{lem}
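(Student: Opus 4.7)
The plan is to express each Littlewood-Paley operator as a convolution with a Schwartz kernel at scale $2^k$ and apply the weighted Young inequality (Lemma \ref{Young}) with exponents $q=1$ and $r=p$.

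More explicitly, set $\psi := \mathcal{F}^{-1}\Psi$ and $\phi := \mathcal{F}^{-1}\Phi$, both of which lie in $\mathcal{S}(\rr^n)$. Then $\De_k f = \psi_k * f$ and $S_k f = \phi_k * f$, where $\psi_k(x) = 2^{kn}\psi(2^k x)$ and $\phi_k(x) = 2^{kn}\phi(2^k x)$. By Lemma \ref{Young} applied with $r = p$, $q = 1$, we have
\[
\|\De_k f\|_{L^p_{\langle x\rangle^a}} \le \|\psi_k\|_{L^1_{\langle x\rangle^a}} \|f\|_{L^p_{\langle x\rangle^a}},
\]
and likewise for $S_k$, so it suffices to bound $\|\psi_k\|_{L^1_{\langle x\rangle^a}}$ and $\|\phi_k\|_{L^1_{\langle x\rangle^a}}$ uniformly in $k \in \nn$.

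The uniform bound follows from a change of variable $y = 2^k x$: since $k \ge 0$,
\[
\|\psi_k\|_{L^1_{\langle x\rangle^a}} = \int_{\rr^n} |\psi(y)|\, \langle 2^{-k} y\rangle^a\, dy \le \int_{\rr^n} |\psi(y)|\, \langle y\rangle^a\, dy,
\]
and the right-hand side is finite because $\psi \in \mathcal{S}(\rr^n)$. The identical estimate holds for $\phi_k$. Combining these with the Young-type estimate completes the proof.

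There is no real obstacle here — the only mild point to notice is that one needs $k \ge 0$ (so that $\langle 2^{-k} y\rangle \le \langle y\rangle$) in order to control the polynomial weight by the scale-one weight after dilation, which is precisely the assumption $k \in \nn$ in the statement.
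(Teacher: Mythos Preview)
Your proof is correct and follows essentially the same approach as the paper's: write $\De_k$ and $S_k$ as convolutions with dilated Schwartz kernels, apply the weighted Young inequality (Lemma~\ref{Young}) with $q=1$, and then use the change of variables together with $\langle 2^{-k}y\rangle \le \langle y\rangle$ for $k\ge 0$ to bound the $L^1_{\langle x\rangle^a}$ norm of the kernel uniformly in $k\in\nn$. The paper's argument is identical in structure and detail.
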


\begin{proof}
We can write
\[
\Delta_k  f = 2^{kn}\wh{\Psi} (2^k \cdot) * f, \qquad
S_k  f = 2^{kn}\wh{\Phi} (2^k \cdot) * f.
\]
Applying Young's inequality from Lemma \ref{Young}, we get
\[
\n{\Delta_k f}{L^p_{\lan{x}^a}} \lesssim \n{2^{kn} \wh{\Psi}(2^k\cdot)}{L^1_{\lan{x}^{a}}} \n{f}{L^p_{\lan{x}^a}},
\]
and $S_k f$ can be bounded similarly.    Hence, it suffices to show that $\ds \n{2^{kn} \wh{\Psi}(2^k\cdot)}{L^1_{\lan{x}^{a}}} $ is uniformly bounded for $k\in\nn$.  Note that
\[
\n{2^{kn} \wh{\Psi}(2^k\cdot)}{L^1_{\lan{x}^{a}}}  = \n{ \wh{\Psi}}{L^1_{\lan{2^{-k}x}^{a}}} \leq \n{ \wh{\Psi}}{L^1_{\lan{x}^{a}}},
\]
which is uniformly bounded in $k\in \nn$.  This proves the desired estimate.
\end{proof}

Another implication of Lemma~\ref{Young} is the Bernstein inequality in the weighted Lebesgue spaces.

\begin{lem}[Weighted Bernstein inequality] \label{Bern}  Let $s> 0,$ $1\le p \le \infty,$ $k\in \nn,$
and $a\ge 0.$  Then
\begin{enumerate}
   \item[(i)] $\|\De_{>k}f\|_{L^p_{\langle x \rangle^a}(\rr^{n})} \lesssim_{a,s} 2^{-ks} \|J^s f\|_{L^p_{\langle x \rangle^a}(\rr^{n})}$
  \item[(ii)] $\|\De_{k}J^{\pm s}f\|_{L^p_{\langle x \rangle^a}(\rr^{n})} \sim_{a,p,s} 2^{\pm ks} \|\De_k f\|_{L^p_{\langle x \rangle^a}(\rr^{n})}$
\item[(iii)] $\|S_{k}J^{s}f\|_{L^p_{\langle x \rangle^a}(\rr^{n})} {\lesssim_{a,p,s}} 2^{ks} \| f\|_{L^p_{\langle x \rangle^a}(\rr^{n})}$
\item[(iv)] for any $0\leq \ve <\min (1,s)$, we have
\[
\|\nabla S_k f\|_{L^p_{\langle x \rangle^a}} \lesssim_\ve  2^{k(1-\varepsilon)} \|f\|_{L^p_{\langle x \rangle^a}}^{1-\frac{\varepsilon}{s}}\|J^s f\|_{L^p_{\langle x \rangle^a}}^{\frac{\varepsilon}{s}}
\]
\end{enumerate}
\end{lem}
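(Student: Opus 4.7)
The plan is to deduce all four inequalities via the weighted Young's inequality (Lemma \ref{Young}), reducing each to a uniform $L^1_{\lan{x}^a}$ bound on the convolution kernel of an appropriate Fourier multiplier. The crucial observation is the following rescaling identity: if $m$ is a Schwartz function and $K_k := \mathcal{F}^{-1}[m(2^{-k}\cdot)]$, then
\[
\|K_k\|_{L^1_{\lan{x}^a}} \,=\, \int |\mathcal{F}^{-1}[m](y)|\,\lan{2^{-k}y}^a\, dy \;\le\; \|\mathcal{F}^{-1}[m]\|_{L^1_{\lan{x}^a}},
\]
which is uniform in $k \ge 0$ whenever $m$ ranges over a bounded subset of the Schwartz class. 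This is what makes polynomial weights behave so well under dilation: contraction from scale $2^k$ to unit scale only shrinks the weight.

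My first step is to prove (ii) and (iii). For (ii), I factor $\Psi(2^{-k}\xi)\lan{\xi}^{s} = 2^{ks}\,\tilde{m}_k(2^{-k}\xi)\,\Psi(2^{-k}\xi)$, where $\tilde{m}_k(\eta) = \tilde{\Psi}(\eta)(2^{-2k}+|\eta|^2)^{s/2}$ with $\tilde{\Psi}\equiv 1$ on $\supp \Psi$. Since $\tilde{\Psi}$ is supported away from the origin, $\{\tilde{m}_k\}$ is a bounded family of smooth, compactly supported symbols, and Young's inequality yields $\|\Delta_k J^s f\|_{L^p_{\lan{x}^a}}\lesssim 2^{ks}\|\Delta_k f\|_{L^p_{\lan{x}^a}}$. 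The reverse direction (and the $J^{-s}$ case) follows by the same scheme with $\tilde{m}_k$ replaced by $\tilde{\Psi}(\eta)(2^{-2k}+|\eta|^2)^{-s/2}$. Part (iii) is identical after replacing $\Psi$ with $\Phi$; here the cutoff need not avoid the origin since $\lan{\xi}^s$ is smooth throughout $\supp \Phi$ for $s \ge 0$.

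Given (ii), part (i) follows by dyadic summation: combining (ii) with Lemma \ref{lp} gives $\|\Delta_j f\|_{L^p_{\lan{x}^a}}\lesssim 2^{-js}\|J^s f\|_{L^p_{\lan{x}^a}}$, and the geometric series $\sum_{j>k}2^{-js}$ converges (using $s>0$) to the bound $\lesssim 2^{-ks}\|J^s f\|_{L^p_{\lan{x}^a}}$. For (iv), I plan to interpolate between two dyadic estimates: the rescaling argument applied to the symbol $i\xi\,\Psi(2^{-j}\xi)$ gives $\|\nabla\Delta_j f\|_{L^p_{\lan{x}^a}}\lesssim 2^{j}\|f\|_{L^p_{\lan{x}^a}}$, while composing this with (ii) yields $\|\nabla\Delta_j f\|_{L^p_{\lan{x}^a}}\lesssim 2^{j(1-s)}\|J^s f\|_{L^p_{\lan{x}^a}}$. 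Taking the geometric mean with weight $\theta = \varepsilon/s \in [0,1]$ produces
\[
\|\nabla\Delta_j f\|_{L^p_{\lan{x}^a}}\,\lesssim\, 2^{j(1-\varepsilon)}\|f\|_{L^p_{\lan{x}^a}}^{1-\varepsilon/s}\|J^s f\|_{L^p_{\lan{x}^a}}^{\varepsilon/s},
\]
and summing over $j \le k$ (together with a harmless contribution from $\nabla S_0$, handled directly by Young) gives the desired bound precisely because $\varepsilon < 1$ makes the geometric series dominated by its last term.

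The main technical obstacle is the uniform (in $k$) control of the rescaled symbols $\tilde{\Psi}(\eta)(2^{-2k}+|\eta|^2)^{\pm s/2}$. The subtlety is that for non-integer $s$ these symbols must have $C^N$-seminorms bounded independently of $k$; this is what makes the rescaling identity usable. It is straightforward because either the cutoff support is bounded away from the origin (removing any singularity in the $-s/2$ case of (ii)), or the exponent is nonnegative (part (iii)), so the rescaled symbols remain in a bounded set of $C^\infty_c(\rr^n)$ as $k \to \infty$. Once this uniformity is secured, everything else is bookkeeping.
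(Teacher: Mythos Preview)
Your approach to (i), (ii), and (iv) is correct and essentially matches the paper's: reduce everything to weighted Young's inequality via uniform $L^1_{\lan{x}^a}$ bounds on rescaled convolution kernels, exploiting that $\lan{2^{-k}y}^a\le\lan{y}^a$ for $k\ge 0$.

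There is, however, a genuine gap in your treatment of (iii). You claim that because the exponent $s/2$ is nonnegative, the family $m_k(\eta)=\Phi(\eta)(2^{-2k}+|\eta|^2)^{s/2}$ ``remains in a bounded set of $C^\infty_c(\rr^n)$ as $k\to\infty$.'' This is false in general. For example, the Hessian of $(2^{-2k}+|\eta|^2)^{s/2}$ at $\eta=0$ equals $s\,2^{k(2-s)}\,\mathrm{Id}$, which blows up for $0<s<2$. More generally, derivatives of order $>s$ are unbounded near the origin as $k\to\infty$ whenever $s\notin 2\nn$. Your rescaling identity therefore does not apply: the bound $\|\cf^{-1}[m_k]\|_{L^1_{\lan{x}^a}}$ is not uniform in $k$ (indeed it grows like $2^{k(a-s)}$ once $a>s$), so you cannot conclude $\|K_k\|_{L^1_{\lan{x}^a}}\lesssim 2^{ks}$ this way.

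The paper fixes exactly this by writing $S_k J^s f = S_0 J^s f + \sum_{j=0}^k \De_j J^s f$: the $S_0$ term is a single fixed Schwartz convolution (no uniformity in $k$ needed), while each $\De_j J^s$ falls under (ii), where the annular cutoff keeps the rescaled symbol away from the origin and the uniform $C^\infty_c$ bound is genuine. Summing the resulting $2^{js}$ factors over $0\le j\le k$ gives the desired $2^{ks}$. Once you insert this decomposition, the rest of your argument goes through unchanged.
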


\begin{proof}
For (i), applying Lemma \ref{Young}, we have
\begin{align*}
  \|\De_{>k}f\|_{L^p_{\langle x \rangle^a}} & =\n{\sum_{j>k}\De_j f}{L^p_{\langle x \rangle^a}} =\n{\sum_{j>k}\De_j (J^{-s}J^sf)}{L^p_{\langle x \rangle^a}}\\
   &\le \sum_{j>k}2^{-js}  \n{\De_j^{\Psi^{-s}_j} J^sf}{L^p_{\langle x \rangle^a}} \le \sum_{j>k}2^{-js} \n{2^{jn}\wh{\Psi_j^{-s}}(2^j\cdot)}{L^1_{\langle x \rangle^a}} \| J^sf\|_{L^p_{\langle x \rangle^a}} , 
\end{align*}
where  $\Psi_j^{-s}(\xi) :=\Psi(\xi) (2^{-2j}+|\xi|^2)^{-\frac{s}{2}}.$  To prove the desired estimate, it suffices to show that $\n{2^{jn}\wh{\Psi_j^{-s}}(2^j\cdot)}{L^1_{\langle x \rangle^a}}$ is uniformly bounded in $j\in \nn$.  Note that, for each $\al \in \nn^n$,
$\partial^\al \Psi_j^{-s}$ is uniformly bounded in $j\in \nn$ due to the support of $\Psi$.  More precisely, direct computations show that
\[
\abs{\partial^\al  \Psi_j^{-s}\pr{2^{-j}\cdot}(\xi)} \lesssim 2^{-j|\al|}, \quad \forall \ j\in \nn, \al\in \nn^n,\ \xi\in \rr^n.
\]
In particular, for any $N\gg 1$, we have
\begin{align*}
 \abs{(1-2^{2j}\Delta)^N \Psi^{-s}_j(2^{-j}\cdot)}(\xi) &\leq  \sum_{\ell=0}^N  \pr{\begin{array}{c} N\\ \ell\end{array}}  2^{2\ell j}\abs{\Delta^\ell \Psi^{-s}_j(2^{-j}\cdot)(\xi)} \lesssim  \sum_{\ell=0}^N  \pr{\begin{array}{c} N\\ \ell\end{array}}  2^{2\ell j} 2^{-2\ell j} = 2^N.
\end{align*}
By the Hausdorff-Young inequality, we can show that
\[
\left|\wh{\Psi_j^{-s}}\pr{2^j x}\right| \lesssim_N \f{1}{(1+|2^j x|^2)^N}, \qquad \tn{ for any }j, N\in \nn, x \in \rr^n.
\]
Then,
\begin{equation}\label{eq:psisj}
\n{2^{jn} \wh{\Psi_j^{-s}}\pr{2^j \cdot} }{L^1_{\langle x \rangle^a} } \lesssim_N 2^{jn}\int_{\rr^n} \frac{(1+|x|^2)^{\frac{a}{2}}}{(1+|2^jx|^2)^{N}}\,dx = \int_{\rr^n} \frac{(1+|2^{-j} x|^2)^{\frac{a}{2}}}{(1+|x|^2)^{N}}\,dx \leq \int_{\rr^n} \lan{x}^{-\f{N-a}{2}}\, dx.
\end{equation}
Choosing $N$ so that $\f{N-a}{2} > n,$ the right hand side (RHS) above is uniformly bounded in $j\in \nn$.   This proves (i).

The proof of  (ii) follows similarly when we note that the \eqref{eq:psisj} holds for any $s\in \rr$.  We omit the details.

For (iii), we will first consider this estimate when $k=0$.  In this case, the operator $S_0 J^s$ is given by a convolution with  $J^s \wh{\Phi} \in \cs(\rr^n)$, which is bounded on $L^1_{\lan{x}^a}$
by Lemma \ref{Young}.  Then, Lemma~\ref{Young} produces the desired estimate when $k=0$.  When $k>0$, we can apply estimate (ii) to get
\begin{align*}
\n{S_{k}J^{s}f}{L^p_{\langle x \rangle^a}} &\leq \n{S_0 J^sf}{L^p_{\langle x \rangle^a}} + \sum_{j=0}^k  \|\De_j J^s f\|_{L^p_{\langle x \rangle^a}}\\
 &\sim_{a,p,s} \n{S_0 J^sf}{L^p_{\langle x \rangle^a}} + \sum_{j=0}^k 2^{js} \|\Delta_j  f\|_{L^p_{\langle x \rangle^a}}
 \end{align*}
The first term on the RHS can be treated as above for $k=0$, while second term can be treated using Lemma~\ref{lp}.  The desired estimate follows easily.

Finally, let us consider (iv).  This estimate was used in the proof of endpoint Kato-Ponce inequality in the unweighted setting in \cite{BL} where the authors used the decomposition $S_{k-3}f=\sum_{j\le k-3} \De_j f$.  However, the low-frequency pieces do not behave well in the weighted setting, so we write instead:
\[S_{k}f=\sum_{1\le j\le k} \De_j f +S_0 f.\]
Now,
\begin{align*}
\begin{split}
\| \nabla S_{k}f \|_{L^{p}_{\langle x \rangle^a}}&\le \sum_{j=1}^{k}\| \nabla \De_{j}f \|_{L^{p}_{\langle x \rangle^a}} + \| \nabla S_0 f \|_{L^{p}_{\langle x \rangle^a}}=: I_1+I_2.
\end{split}
\end{align*}

For $I_1$, we select $\wt{\Psi} \in \cs(\rr^n)$ satisfying $\wt{\Psi}\cdot \Psi = \Psi$ and write
\begin{equation}\label{eq:conv}
\n{\na \De_j f}{L^p_{\langle x\rangle^a}} = \n{2^{jn}\na \pr{\wt{\Psi}(2^j\cdot) * \De_j f}}{L^p_{\langle x\rangle^a}} \lesssim 2^j \n{2^{jn}(\na \wt{\Psi})(2^j\cdot)}{L^1_{\lan{x}^a}} \n{ \De_j f}{L^p_{\langle x\rangle^a}} \lesssim 2^j  \n{\De_j f}{L^p_{\langle x\rangle^a}},
\end{equation}
where the boundedness of $\n{2^{jn}(\na \wt{\Psi})(2^j\cdot)}{L^1_{\lan{x}^a}}$ follows since $\na \wt{\Psi} \in [\cs(\rr^n)]^n$ and $j\in \nn$. Applying Lemma~\ref{lp} and the statement (ii) from the present lemma, we can write, for any $0\leq \ve<\min(1,s)$,
\[
I_1 \lesssim \sum_{j=1}^{k} 2^{j(1-\ve)}  2^{j\ve}\|\Delta_j f\|_{L^p_{\langle x \rangle^a} }\lesssim 2^{k(1-\ve)}\sup_{j\in\nn} \|\Delta_j f\|_{L^p_{\langle x \rangle^a}}^{\frac{s-\ve}{s}} \|\Delta_j J^s f\|_{L^p_{\langle x \rangle^a}}^{\frac{\ve}{s}}\lesssim 2^{k(1-\ve)} \|f\|_{L^p_{\langle x \rangle^a}}^{\frac{s-\ve}{s}} \|J^s f\|_{L^{p}_{\langle x \rangle^a}}^{\frac{\ve}{s}}.
\]

For $I_2$, we have
\begin{align*}
\begin{split}
I_2 &= \n{\na \wh{\Phi}* f }{L^{p}_{\langle x \rangle^{a}}}^{1-\frac{\varepsilon}{s}} \n{\na J^{-s} \wh{\Phi}* J^s f }{L^{p}_{\langle x \rangle^{a}}}^{\frac{\varepsilon}{s}}.
\end{split}
\end{align*}
Since $\na  \wh{\Phi}, \,\na J^{-s} \wh{\Phi}\in [\mathcal{S}(\rr^n)]^n$, the desired estimate follows from Lemma~\ref{Young}.
\end{proof}

The weighted Young and Littlewood-Paley inequalities are helpful for proving the Kato-Ponce estimates for the Banach range of indices, that is $p\in [1,\infty]$.  However, they are not sufficient for the quasi-Banach range $p\in [\f{1}{2},1)$.  For these cases, we first recall useful decay estimates established by Muscalu and Schlag \cite[Chapter 2]{MS13a} and also by Grafakos and the first author of this manuscript \cite[Lemmas 1 and 2]{GO}.
\begin{lem}\label{le:GO} \cite{MS13a, GO}
Let $J_\delta^s = (\de^2 - \Delta)^{s/2}$ for $\de\in(0,1]$ and let $f\in \cs(\rr^n)$ and $s>0$.  Then,
\begin{equation}\label{eq:2.1}
\left|J_\de^{s} f(x)\right| \lesssim_{n,s,f}  (1+ |x|)^{-n-s}.
\end{equation}

If $s\notin 2\nn$ and $f(x)\ge 0,\forall x\in \rr^n$, and $f\not\equiv 0$, then there exists $R\gg 1$ such that
\begin{equation}\label{eq:sharp}
|D^s f(x)|\gtrsim_{n,s,f} |x|^{-n-s},\qquad \forall x: |x|>R.
\end{equation}
\end{lem}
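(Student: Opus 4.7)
I would argue the two assertions separately.

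For \eqref{eq:2.1}, my strategy is a Littlewood-Paley decomposition
\[
J_\delta^s f \;=\; S_0 J_\delta^s f \;+\; \sum_{k\ge 1}\Delta_k J_\delta^s f.
\]
On the annulus $|\xi|\sim 2^k$ with $k\ge 1$, the assumption $\delta\le 1\le 2^k$ yields $\delta^2+|\xi|^2\sim |\xi|^2$, so the multiplier $\Psi(2^{-k}\xi)(\delta^2+|\xi|^2)^{s/2}$ satisfies symbol-of-order-$s$ estimates $|\partial^\alpha[\,\cdot\,]|\lesssim_\alpha 2^{k(s-|\alpha|)}$ \emph{uniformly in} $\delta\in(0,1]$. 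Integrating by parts $2N$ times in $\xi$ against $e^{ix\cdot\xi}$, combined with the Schwartz decay of $\widehat f$, gives $(1+|x|)^{2N}|\Delta_k J_\delta^s f(x)|\lesssim_{N,M,f} 2^{k(n+s-M)}$ for any $M$; choosing $M>n+s$ makes the sum in $k$ converge and yields arbitrary polynomial decay from the high-frequency pieces.

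The low-frequency piece $S_0 J_\delta^s f = f * K_\delta$, with $K_\delta = \cf^{-1}[\Phi(\xi)(\delta^2+|\xi|^2)^{s/2}]$, is the delicate part and represents the principal obstacle. Naive iterated integration by parts fails because $\partial^\alpha(\delta^2+|\xi|^2)^{s/2}$ becomes singular near $\xi=0$ as $\delta\to 0$ once $|\alpha|>s$. The remedy is to exploit the uniform-in-$\delta$ regularity of the symbol: the direct pointwise estimate $|(\delta^2+|\xi|^2)^{s/2}-\delta^s|\lesssim |\xi|^s$ shows that the symbol lies in a bounded set of $C^s(\rr^n)$ uniformly in $\delta\in (0,1]$, and standard estimates for the Fourier transform of compactly-supported $C^s$ functions then give the kernel bound $|K_\delta(x)|\lesssim (1+|x|)^{-n-s}$ uniformly in $\delta$. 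Convolving with the Schwartz function $f$ preserves this rate of decay, completing \eqref{eq:2.1}.

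For \eqref{eq:sharp}, the plan is to isolate the Fourier singularity at the origin. Let $\eta\in C_0^\infty(\rr^n)$ be a smooth cutoff equal to $1$ near $\xi=0$, and decompose
\[
\widehat f(\xi)\;=\;\widehat f(0)\,\eta(\xi)\;+\;g(\xi),\qquad g(\xi)=O(|\xi|)\text{ near }\xi=0.
\]
Then $D^s f(x)=\widehat f(0)\,\cf^{-1}[\eta(\xi)|\xi|^s](x)+\cf^{-1}[g(\xi)|\xi|^s](x)$. For $s\notin 2\nn$ with $s>-n$, one has $\cf^{-1}[|\xi|^s](x)=c_{n,s}|x|^{-n-s}$ as a tempered distribution with $c_{n,s}\neq 0$. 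Rewriting $\cf^{-1}[\eta(\xi)|\xi|^s]=\check\eta * (c_{n,s}|x|^{-n-s})$ and using the Schwartz decay of $\check\eta$ gives the asymptotic $\cf^{-1}[\eta(\xi)|\xi|^s](x)=c_{n,s}|x|^{-n-s}+O(|x|^{-n-s-1})$ as $|x|\to\infty$. A parallel argument, combined with $g(\xi)=O(|\xi|)$ near the origin, shows $|\cf^{-1}[g(\xi)|\xi|^s](x)|=o(|x|^{-n-s})$ at infinity. Since $\widehat f(0)=\int f > 0$ by the hypothesis $f\ge 0$, $f\not\equiv 0$, the first term dominates for sufficiently large $|x|$, giving $|D^s f(x)|\gtrsim |x|^{-n-s}$ in that regime.
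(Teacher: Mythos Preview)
Your high-frequency treatment for \eqref{eq:2.1} is fine, and your plan for \eqref{eq:sharp}---isolating the contribution from $\widehat f(0)$ and using $\cf^{-1}[|\xi|^s]=c_{n,s}|x|^{-n-s}$ with $c_{n,s}\neq 0$ when $s\notin 2\nn$---is correct and is essentially how the lower bound is obtained in \cite{GO}.

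The gap is in the low-frequency step for \eqref{eq:2.1}. The principle you invoke is false: a generic compactly supported $C^s$ function has Fourier transform decaying only like $(1+|x|)^{-s}$, not $(1+|x|)^{-n-s}$ (use the first-difference identity $2\hat g(x)=\int[g(\xi)-g(\xi-\pi x/|x|^2)]e^{-ix\cdot\xi}\,d\xi$; this rate is sharp for, say, Weierstrass-type examples). Moreover, the single pointwise bound $|(\delta^2+|\xi|^2)^{s/2}-\delta^s|\lesssim|\xi|^s$ does not by itself establish membership in $C^s$. What actually drives the $(1+|x|)^{-n-s}$ decay is the full family of homogeneous symbol estimates $|\partial^\alpha(\delta^2+|\xi|^2)^{s/2}|\lesssim(\delta+|\xi|)^{s-|\alpha|}$, uniform in $\delta$, combined with a splitting of the $\xi$-integral at scale $|\xi|\sim 1/|x|$; equivalently, the point is that the symbol is smooth away from a single point and its only singularity, at $\xi=0$, is of the specific type $|\xi|^s$, whose inverse Fourier transform is $c_{n,s}|x|^{-n-s}$. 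Your argument can be repaired along these lines, but as written it rests on a false lemma.

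For comparison, the paper does not reprove this lemma (citing \cite{MS13a,GO}), but its proof of the refinement Lemma~\ref{le:jsde} exhibits the method of \cite{GO}: the kernel is expressed via the subordination identity
\[
K_s^\delta(y)=\int_0^\infty e^{-|y|^2/t}\,e^{-\delta^2 t}\,t^{-(s+n)/2}\,\frac{dt}{t}
\]
(valid for $s<0$ and extended to $s\ge 0$ by analytic continuation after subtracting a Taylor polynomial), and the $|y|^{-n-s}$ decay is read off directly from the $t$-integral. This bypasses Fourier-side symbol arguments altogether.
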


While we will at times use this lemma in its current form, this decay rate is not sufficient to establish the desired estimate for the sharp range of $s$.  In fact, this decay estimate can lead to the proof of Kato-Ponce inequalities only when $s> \max\left\{\f{a}{p} + n \pr{\f{1}{p} - 1},0\right\}$, rather than the sharp range $s> \max\left\{n \pr{\f{1}{p} - 1},0\right\}$.  To clarify this point, we demonstrate the failure of the Kato-Ponce inequality in the following:

\begin{rem}\label{rem:counter}
(i)
When $0<s\leq \f{a}{p} + n \pr{\f{1}{p} - 1}$ and $s\not\in 2\nn$, the homogeneous Kato-Ponce inequality, where $J^s$ is replaced by $D^s$ in \eqref{eq:1.3}, fails in general 

(ii) When $0<s\leq \f{a}{p} + n \pr{\f{1}{p} - 1}$ and $s\not\in 2\nn$, the inhomogeneous Kato-Ponce inequality \eqref{1.4} fails for $w_1=|x|^{a_1}\in A_{p_1}, w_2=|x|^{a_2}\in A_{p_2}.$
\end{rem}

\begin{proof}

We will verify that the counterexample from \cite{GO} is sufficient to show the statement (i).

Assume that  $0<s\le \frac{a}{p}+n(\frac{1}{p}-1)$ and $s\notin 2\nn.$  We define $f(x):=\wh{\Phi}(x)e^{  10i e_1 \cdot x}$ and $g(x):= \wh{\Phi}(x)e^{- 10i e_1\cdot x}$ where $e_1$ is a unit vector in $\rr^n$ and $\Phi$ is as defined in Section~\ref{sec:prelim}.   Note that the supports of $\wh{f}$ and $\wh{g}$ do not contain the origin.  This means $D^s f, D^s g \in \cs (\rr^n)$ and the RHS of the homogeneous Kato-Ponce inequality is finite using these functions.

However, the support of $\wh{fg}$ does contain the origin, and $fg = \pr{\wh{\Phi}}^2$ is a nonnegative and nonzero Schwartz function. By \eqref{eq:sharp}, we know that
\[ |D^s(fg)(x)| \gtrsim |x|^{-n-s} \qquad \tn{ when } |x| \gg 1,\]
which implies
\begin{equation}\label{eq:div}
\|D^s(fg)\|_{L^p_{| x |^a}}^p \gtrsim \int_{|x|\gg 1}\frac{dx}{|x|^{p(n+s)-a}}.
\end{equation}
Since $0<s\le \frac{a}{p}+n(\frac{1}{p}-1)$, we know
\[p(n+s)-a=p\left(n+s-\frac{a}{p}\right)\le p\left(n+n\left(\frac{1}{p}-1\right)\right)\le n.\]
Hence, the RHS of \eqref{eq:div} diverges, which implies that the LHS of the homogeneous Kato-Ponce inequality is infinite with these functions.  This leads to a contradiction.

For (ii), let $f,g \in \cs (\rr^n)$ such that the support of $fg$ is nonnegative and nonzero.  We denote $f_\de (x) = f\pr{\f{ x}{\de}}$ and similarly $g_\de(x) = g\pr{\f{x}{\de}}$.
Assume that \eqref{1.4} holds with $w_1=|x|^{a_1}\in A_{p_1}, w_2=|x|^{a_2}\in A_{p_2}$.  This means that $a_i \in (-n, n(p_i - 1))$ for $i=1,2$.  

Applying \eqref{1.4} to these functions will lead to
\begin{equation*} 
\n{J^s (f_\de g_\de)}{L^p_{ |x|^a
}} \lesssim  \n{J^s f_\de}{L^{p_1}_{|x|^{a_1}}} \n{g_\de}{L^{p_2}_{|x|^{a_2}}} + \n{f_\de}{L^{p_1}_{|x|^{a_1}}}\n{J^s g_\de}{L^{p_2}_{|x|^{a_2}}},
\end{equation*}
which is equivalent to
\begin{equation} \label{homsharp}
\n{ J_\de^s (f g) }{L^p_{ |x|^a}} \lesssim \n{J^s_\de f}{L^{p_1}_{|x|^{a_1}}} \n{g}{L^{p_2}_{|x|^{a_2}}} + \n{ f}{L^{p_1}_{|x|^{a_1}}} \n{J^s_\de g}{L^{p_2}_{|x|^{a_2}}}.
\end{equation}

For the RHS, we note that
\begin{align*}
\int_{\rr^n} |J_\de^s (f )(x)|^{p_1}  |x|^{a_1} \, dx &\lesssim_{f, M}  1+ \int_{|x|\geq 1} \pr{\lan{x}^{-M} + |x|^{-(n+s)p_1+ a_1 }} e^{-c \de |x|} \, dx\\
 &\lesssim_M 1+\de^{-n + (n+s)p_1 -a_1} \int_{|x|\geq \de} |x|^{-(n+s)p_1+ a_1 }e^{-c |x|} \, dx
\end{align*}
Since we have assumed $a_1 < n(p_1 - 1)$ and $s>0$, we have $-n + (n+s) p_1 - a_1 >0$, which is equivalent to $-(n+s)p_1 + a_1 < -n.$  This means that the exponent of $\de$ is positive, while the integrand is not locally integrable.  Using polar coordinates, we can bound this expression by a constant multiple of $1+ \de^{-n + (n+s)p_1 -a_1}$, which is uniformly bounded in $\de \in (0,1]$.  Similarly, the second term on the RHS of \eqref{homsharp} is uniformly bounded in $\de$.

On the other hand,  the dominated convergence theorem implies that $J_\de^s (fg) \to D^s (fg)$ almost everywhere.  Hence, by Fatou's lemma,  
\[
\left(\int_{\rr^n} |D^s (fg)|^p (x) \, |x|^a\, dx \right)^{\f{1}{p}} \leq \liminf_{\de \to 0}\left(\int_{\rr^n} |J_\de^s (fg)|^p (x) \, |x|^a\, dx \right)^{\f{1}{p}}.
\]
But $|D^s (fg)|(x) \gtrsim \lan{x}^{-n-s}$ for $|x|\gg 1$, so that the integrand on the LHS is bounded below by a constant multiple of $\lan{x}^{-(n+s)p + a}$ when $|x| \gg 1$. Noting that
\[
s \leq n\pr{\f{1}{p}-1} + \f{a}{p} \iff -( n+s)p + a \geq -n,
\]
we conclude that the LHS of \eqref{homsharp} is not uniformly bounded in $\de\in (0,1]$, while the RHS remains uniformly bounded.  This  leads to a contradiction. 
\end{proof}

While we do not prove the positive directions of these Kato-Ponce inequalities in this manuscript, these counterexamples bring up a few noteworthy points:

\begin{itemize}
    \item The decay rate given in \eqref{eq:2.1} is insufficient for the proof of Theorem~\ref{main1} for the full sharp range of indices $s > \max\left\{n\pr{\f{1}{p}-1},0\right\}$, while it may be sufficient for a smaller range $s > \max\left\{\f{a}{p} + n\pr{\f{1}{p}-1},0\right\}$.

    \item A theorem in \cite{NT} states that homogeneous ($D^s$) and inhomogeneous ($J^s$) Kato-Ponce inequalities with weights satisfying the $A_p$ condition is valid for $s>\max\left\{n\pr{\f{\tau(w)}{p}-1},0\right\}$, where $\tau(w) = \inf \{\tau \in (1,\infty): w \in A_\tau\}$.  For the polynomial weights $w= \lan{x}^a$ and $w= |x|^a$, this restriction is translated to $s > \max\left\{\f{a}{p} + n\pr{\f{1}{p}-1},0\right\}$.  Hence, the counterexamples above demonstrate that both the homogeneous and inhomogeneous Kato-Ponce inequalities fails below this threshold of $s$ for certain (for instance, polynomial) $A_p$ weights.

    \item These counterexamples contradict  statements in \cite{CN} which claim the validity of both Kato-Ponce inequalities with $A_p$ weights for the larger range $s > \max\left\{n\pr{\f{1}{p}-1},0\right\}$.
\end{itemize}

Nevertheless, the full range of the inhomogeneous Kato-Ponce inequality still holds true with weights $\lan{x}^a$ as stated in Theorem~\ref{main1}.  In order to prove the inequality for the sharp range of $s$, we will need an additional exponential decay in place of \eqref{eq:2.1}, which is achieved by modifying the proof of Lemma 2 in \cite{GO}.

\begin{lem} \label{le:jsde}
Let $J^s_\de$ be as defined in Lemma~\ref{le:GO}, $s \in (-n,\infty)\setminus 2\nn$ and $f \in \cs(\rr^n)$.  For any $M\gg 1$,
\[
|J_\de^s f (x)| \lesssim_{n,s,f, M} \lan{x}^{-M} +  \lan{x}^{-n-s} e^{-c \de |x|}
\]
where the implicit constant is independent of $\de\in (0,1]$ and $x\in \rr^n$.  Further, if $s\in 2\nn$,
\[
|J_\de^s f (x) | \lesssim_{n,s,f, M} \lan{x}^{-M}.
\]
\end{lem}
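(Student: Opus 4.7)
The argument splits according to $s\in 2\nn$ (trivial) versus $s\in(-n,\infty)\setminus 2\nn$ (the main case), with the latter further split by the size of $\delta|x|$.

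For $s\in 2\nn$, the symbol $(\delta^2+|\xi|^2)^{s/2}$ is a polynomial in $\xi$ of degree $s$ with coefficients polynomial in $\delta^2$ and uniformly bounded for $\delta\in(0,1]$; hence $J^s_\delta f$ is a uniformly bounded linear combination of derivatives $\partial^\alpha f$ with $|\alpha|\le s$, and the Schwartz decay of $f$ yields $|J^s_\delta f(x)|\lesssim_M \langle x\rangle^{-M}$. For $s\in(-n,\infty)\setminus 2\nn$ with $\delta|x|\le 1$, Lemma~\ref{le:GO} already gives $|J^s_\delta f(x)|\lesssim \langle x\rangle^{-n-s}\lesssim \langle x\rangle^{-n-s}e^{-c\delta|x|}$, since $e^{-c\delta|x|}\ge e^{-c}$ in this range.

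For $s\in(-n,\infty)\setminus 2\nn$ with $\delta|x|>1$, decompose $f=f_0+f_\infty$ where $f_0=\chi_{|x|} f$ and $\chi_{|x|}\in C_c^\infty$ is supported in $B(0,|x|/4)$ and equal to $1$ on $B(0,|x|/8)$. The tail $f_\infty=(1-\chi_{|x|})f$ vanishes on $B(0,|x|/8)$, so all its Schwartz seminorms are dominated by $\langle x\rangle^{-M}$ (any $M$); tracking these constants in Lemma~\ref{le:GO} applied to $f_\infty$ yields $|J^s_\delta f_\infty(x)|\lesssim_M \langle x\rangle^{-M}$. For the compactly supported $f_0$, $\widehat{f_0}$ extends to an entire function of exponential type $\le |x|/4$ (Paley--Wiener), while $(\delta^2+|\xi|^2)^{s/2}$ extends holomorphically to the tube $\{\xi+i\eta:|\eta|<\delta\}$ (its only singularities being the branch points on the imaginary axis of magnitude $\delta$). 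Deforming the contour $\xi\mapsto\xi+i\lambda\delta x/|x|$ for some $\lambda\in(0,1)$ produces the overall factor $e^{-\lambda\delta|x|}$, partially offset by the Paley--Wiener growth $e^{\lambda\delta|x|/4}$ of $\widehat{f_0}$, leaving a net exponential $e^{-3\lambda\delta|x|/4}$. The polynomial prefactor $\langle x\rangle^{-n-s}$ is then extracted by adapting the argument for Lemma~\ref{le:GO} from \cite{GO} to the shifted integrand, giving $|J^s_\delta f_0(x)|\lesssim\langle x\rangle^{-n-s}e^{-c\delta|x|}$.

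The main difficulty lies in preserving uniformity in $\delta\in(0,1]$: the tube of holomorphy $\{|\Im\xi|<\delta\}$ shrinks as $\delta\to 0$, which is what dictates that the best exponential decay rate be proportional to $\delta$ rather than to a fixed constant; and extracting the polynomial prefactor $\langle x\rangle^{-n-s}$ from the shifted integrand requires a careful adaptation of Lemma~\ref{le:GO}'s proof rather than naive $\xi$-integration by parts, since $\xi$-derivatives of $(\delta^2+|\xi+i\eta|^2)^{s/2}$ near the origin introduce $\delta$-dependent blow-ups that must be managed together with the $\delta|x|\ge 1$ constraint.
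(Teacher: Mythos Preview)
Your approach is genuinely different from the paper's. The paper never touches the Fourier side or complex analysis; instead it works entirely with the real-variable Bessel kernel. Writing $J^s_\delta f = K^\delta_s * f$ via the subordination formula
\[
K^\delta_s(y)=\int_0^\infty e^{-|y|^2/t}\,e^{-\delta^2 t}\,t^{-(s+n)/2}\,\frac{dt}{t},
\]
the authors split the $t$-integral at an optimized cutoff $B$ and obtain the pointwise kernel bound $K^\delta_s(y)\lesssim |y|^{-n-s}e^{-c\delta|y|}$ for $|y|\ge 1$ (with $c=4/(n+s+2)$). The convolution is then split according to $|x+y|\ll|x|$ versus $|x+y|\gtrsim|x|$; the second region inherits Schwartz decay from $f$, and the first reduces to the kernel bound since $|y|\sim|x|$ there. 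The analytic-continuation formula with Taylor subtraction handles $s\ge 0$ and also explains the $s\in 2\nn$ case. This is short, elementary, and yields an explicit exponential rate.

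Your contour-shift strategy is plausible and can be made to work, but the pointer you give for the crucial step is misleading. You write that the polynomial prefactor $\langle x\rangle^{-n-s}$ is to be extracted by ``adapting the argument for Lemma~\ref{le:GO},'' yet that lemma (in \cite{GO}) is proved by exactly the kernel method above, not by anything on the Fourier side, so there is nothing there to transplant to the shifted integrand. The honest way to close your argument is precisely the integration by parts you dismiss as ``naive'': after the shift $\xi\mapsto\xi+i\eta$ with $|\eta|=\lambda\delta$, integrate by parts $N>\lceil n+s\rceil$ times along $x/|x|$. Derivatives landing on $\widehat{f_0}(\cdot+i\eta)$ stay uniformly bounded (the seminorms of $y^\alpha f_0$ are controlled by those of $f$, and derivatives of the cutoff $\chi_{|x|}$ are $O(|x|^{-|\beta|})\le 1$). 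Derivatives landing on the symbol give $|\partial^N m_\delta(\xi)|\lesssim(\delta+|\xi|)^{s-N}$; integrating over $\xi$ yields at worst $\delta^{s-N+n}$. The resulting bound $|x|^{-N}\delta^{s-N+n}e^{-c\delta|x|}$ equals $|x|^{-n-s}(\delta|x|)^{-(N-n-s)}e^{-c\delta|x|}\le |x|^{-n-s}e^{-c\delta|x|}$ exactly because $\delta|x|>1$ in your regime. So the $\delta$-dependent blow-ups you flag are real but are absorbed by the constraint, and no separate ``adaptation of Lemma~\ref{le:GO}'' is needed. With that correction your outline is sound; the paper's route is simply more direct and avoids any complex analysis.
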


\begin{proof}
We begin with the expression \cite[Lemma 2]{GO}, which was obtained using an analytic continuation argument:  If $s <0$, $J^s_\de$ is a convolution operator given by $J^s_\de f = K^{\de}_s *f$
where the convolution kernel is defined by
\begin{equation}\label{eq:ksde}
K_s^{\de}(y) :=  \int_0^{\infty} e^{-\f{|y|^2}{t}} e^{-\de^2 t} t^{-\f{s+n}{2}}\, \f{dt}{t}.
\end{equation}
This convolution kernel is locally integrable as long as $s<0$, which means that the convolution is well-defined.  If $s\geq 0$, an analytic continuation arguments can be used to extend this definition.  For instance, given $s\in [N-1,N)$ where $N\in \nn$, we can define
\begin{equation}\label{bessel}
(J^s_\de f)(x) =
C(s) \int_{\rn}K^{\de}_s(y)\, \left[f(x+y) - \sum_{|\al| < N} \f{[\p^{\al} f](x)}{\al !} y^{\al}\right]\, dy +\sum_{|\al| < N} C(\al, n) \de^{s-|\al|}[\p^{\al} f](x)\, ,
\end{equation}
as the holomorphic continuation.  Here, $C(s)$ is a holomorphic function of $s$ with zeros on $2\nn$.

We can see that the second term on the RHS of \eqref{bessel} only contains the local derivatives of $f$.  Since we assumed that $f$ is a Schwartz function, this term decays faster than any polynomial, which only contributes to the $\lan{x}^{-M}$ term for our estimate.  This proves the desired  estimate in the case $s\in 2\nn$ since now the second term on the RHS of \eqref{bessel} is the only nonzero term.

Let us consider the first term on the RHS of \eqref{bessel}.  {It was shown in \cite{GO} that the only part of this expression which does not decay like a Schwartz function is the following term:}
\begin{equation*}
C(s) \int_{|y|\geq 1}K^{\de}_s(y)\, f(x+y)\, dy.
\end{equation*}
While this argument was constructed primarily for $s\geq 0$, it is equally valid for $s< 0$ since the expression for  $J^s_\de f$ is free of  non-local-integrability issues to begin with (hence is simpler).  The restriction $|y|\geq 1$ guarantees that we are away from the singularity of $K_s^\de(y)$, so that $|K^s_\de(y)|\lesssim 1$ within this domain uniformly in $\de \in (0,1]$.    We can split this integral as follows:
\[
\int_{|y|\geq 1, |x+y|\ll |x|}K^{\de}_s(y)\, f(x+y)\, dy + \int_{|y|\geq 1, |x+y|\gtrsim |x|}K^{\de}_s(y)\, f(x+y)\, dy  =: I + II.
\]

For $II$, we can see that any decay in $|x+y|$ from the integrand leads to a similar decay in $|x|$.  Hence,
\[
|II| \lesssim \lan{x}^{-M} \int_{\rr^n} \lan{x+y}^M |f(x+y)|\, dx,
\]
which again means that $II$ decays like a Schwartz function.

It remains to prove the desired  bound for $I$.   We will achieve this bound by first estimating $K_s^\de(y)$.  Splitting the integral in \eqref{eq:ksde} at some value $B$ to be determined later, we can write
\begin{equation}\label{eq:ksde1}
K_s^{\de}(y) = \int_0^{B} e^{-\f{|y|^2}{t}} e^{-\de^2 t} t^{-\f{s+n}{2}}\, \f{dt}{t} + \int_B^{\infty} e^{-\f{|y|^2}{t}} e^{-\de^2 t} t^{-\f{s+n}{2}}\, \f{dt}{t}.
\end{equation}
For the first integral in \eqref{eq:ksde1}, we write
\[
\int_0^{B} e^{-\f{|y|^2}{t}} e^{-\de^2 t} t^{-\f{s+n}{2}}\, \f{dt}{t} \leq \int_0^{B} e^{-\f{|y|^2}{t}} t^{-\f{s+n}{2}}\, \f{dt}{t}= |y|^{-(n+s)} \int_0^{\f{B}{|y|^2}} e^{-\f{1}{t}} t^{-\f{s+n}{2}}\, \f{dt}{t}.
\]
As long as $t^{-\f{s+n}{2} - 1}$ carries a non-positive exponent, the integrand is bounded over the domain of integration.  Hence, we can estimate this integral by the size of the domain, which leads to
\[
\int_0^{B} e^{-\f{|y|^2}{t}} e^{-\de^2 t} t^{-\f{s+n}{2}}\, \f{dt}{t} \lesssim_{s,n} B|y|^{-(n+s)-2} \qquad \tn{ if } s \geq -n-2.
\]
To estimate the second integral in \eqref{eq:ksde1}, we can use
\[
-\f{|y|^2}{t} - \de^2 t \leq -2\de |y|.
\]
This leads to
\[
\int_B^{\infty} e^{-\f{|y|^2}{t}} e^{-\de^2 t} t^{-\f{s+n}{2}}\, \f{dt}{t}  \leq e^{-2\de |y|} \int_B^\infty t^{-\f{n+s}{2} - 1}\, dt =  \f{2}{s+n} e^{-2\de |y|} B^{-\f{n+s}{2}} \qquad \tn{ if } s > -n.
\]
To find the optimal cutoff $B$, we set the two upper bounds equal to each other, which leads to
\[
B = |y|^{2} e^{-\f{4\de}{n+s+2} |y|}.
\]
Substituting this into the upper bounds gives us
\begin{equation*}
K_s^{\de}(y) \lesssim_{n,s}  |y|^{-(n+s)} e^{-\f{4\de}{n+s+2} |y|}.
\end{equation*}
Applying this estimate, we obtain
\[
|I| \lesssim \int_{|y|\sim |x|} |y|^{-n-s} e^{-c \de |y|} |f(x+y)|\, dy \lesssim |x|^{-n-s} e^{- c' \de |x|} \int_{\rr^n} |f(y)|\, dy,
\]
for some positive constants  $c$ and $c'$.  This proves the desired estimate for $I$, and hence completes the proof of Lemma \ref{le:jsde}.
\end{proof}

The improved estimate for $J^s_\de (f)$ given in Lemma~\ref{le:jsde} together with square-function estimates like the one used in \cite{CN} would lead to the validity of the inhomogeneous Kato-Ponce estimate for the sharp range of $s$, but only for non-endpoint ($L^1$ and $L^\infty$) cases.  In order to treat the endpoint cases, we will adopt the methods used in \cite{BL, OW} in the unweighted setting. The following is the main lemma for the proof of Theorem~\ref{main1}.

\begin{lem}\label{le:main}
Let  $p\in (0,\infty]$, $a\geq 0$, and let $\{\si_k\}_{k\in \nn}$ be a compactly supported family of $C^{\infty}$ functions on $\rr^n$.  That is, there exists $R\gg 1$ such that
\begin{equation}\label{new3.11}
  \tn{supp}\,\si_k \subset \{\xi\in \rr^n: |\xi|< R\}
\end{equation}
for all $k\in \nn$.  If there exist $\ga > \max\{\f{n}{p},n\}$ satisfying
\[
|\wh{\si_k}(x) | \lesssim_M \lan{x}^{-M} + \lan{x}^{-\ga} e^{-c 2^{-k} |x|}
\]
for any $M\gg 1$ and $k\in \nn$, where the implicit constant is independent of $k\in \nn$ and $x\in \rr^n$, then
\begin{equation}\label{new2.1}
\n{ \int_{\rr^n} \si_k(2^{-k} \xi) \wh{S_k h}(\xi)  e^{i\xi \cdot (\cdot)}\, d\xi }{L^p_{\lan{x}^a}} \lesssim \n{S_k h}{L^p_{\lan{x}^a}}.
\end{equation}
\end{lem}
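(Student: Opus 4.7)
The plan is to recognize the operator in~\eqref{new2.1} as convolution of $S_k h$ with the kernel
\[K_k(x) := c\,2^{kn}\widehat{\sigma_k}(2^k x),\]
whose Fourier transform $\widehat{K_k}(\xi)=\sigma_k(2^{-k}\xi)$ is supported in $\{|\xi|\le R\,2^k\}$. The hypothesis on $\widehat{\sigma_k}$ yields the pointwise bound
\[
|K_k(x)| \lesssim_M 2^{kn}\Bigl(\langle 2^k x\rangle^{-M} + \langle 2^k x\rangle^{-\gamma} e^{-c|x|}\Bigr)
\]
for every large $M$, and $T_k h$ inherits Fourier localization at scale $2^k$. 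These two facts drive the argument, which splits cleanly at $p=1$.

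For $1\le p\le\infty$, I would invoke the weighted Young inequality (Lemma~\ref{Young}), which reduces~\eqref{new2.1} to the uniform-in-$k$ estimate $\|K_k\|_{L^1_{\langle x\rangle^a}} \lesssim 1$. After the change of variables $y=2^k x$,
\[
\|K_k\|_{L^1_{\langle x\rangle^a}} \lesssim \int_{\mathbf R^n}\Bigl(\langle y\rangle^{-M} + \langle y\rangle^{-\gamma} e^{-c\,2^{-k}|y|}\Bigr)\langle 2^{-k} y\rangle^{a}\,dy.
\]
Choosing $M>n+a$ handles the first summand using $\langle 2^{-k}y\rangle^a\le\langle y\rangle^a$, while the elementary estimate $(1+t)^a e^{-ct}\lesssim_a 1$ applied with $t=2^{-k}|y|$ dominates the second integrand by $\langle y\rangle^{-\gamma}$, which is integrable precisely because $\gamma>n$.

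For $0<p<1$, weighted Young is no longer available, so I would exploit the Fourier localization at scale $2^k$ via a Plancherel--Polya-type sampling equivalence. Fixing a lattice $\{y_j\}$ of spacing $2^{-k}$, both $\|T_k h\|_{L^p_{\langle x\rangle^a}}^p$ and $\|S_k h\|_{L^p_{\langle x\rangle^a}}^p$ are comparable to $\sum_j |f(y_j)|^p\langle y_j\rangle^a 2^{-kn}$ (for $f$ equal to the corresponding function), because $\langle y\rangle^a$ varies by a bounded factor across any $2^{-k}$-cube when $k\ge 0$ and because a Fourier-localized function is essentially constant on such cubes. Using the pointwise discretization $|T_k h(y_j)|\lesssim \sum_i |K_k(y_j-y_i)|\,|S_k h(y_i)|\,2^{-kn}$ (which follows from the slow variation of $K_k$ at scale $2^{-k}$), the $p$-subadditivity $|u+v|^p\le|u|^p+|v|^p$, and the weight inequality $\langle y_j\rangle^a\le\langle y_j-y_i\rangle^a\langle y_i\rangle^a$ (valid for $a\ge 0$), the desired estimate reduces, after translating the inner sum, to
\[
\int_{\mathbf R^n} |K_k(y)|^p \langle y\rangle^a\,dy \lesssim 2^{kn(p-1)}.
\]
Pulling out the factor $2^{knp}$ from $|K_k|^p$ and repeating the change of variables above, this bound is valid provided $Mp>n+a$ (no issue, since $M$ is free) and $\gamma p>n$, the latter matching precisely the hypothesis $\gamma>n/p$.

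I expect the main obstacle to be the careful quantification of the Plancherel--Polya sampling equivalence and the pointwise discretization uniformly in $k$: one must verify that $\langle y\rangle^a$ is slowly varying at scale $2^{-k}$ for every $k\ge 0$ (routine once $k\ge 0$) and that the local $L^p_w$-to-$L^\infty$ control of Fourier-localized functions holds with constants depending only on the common Fourier support bound $R$ rather than on $k$ itself. Once these routine but careful verifications are in hand, the three displayed estimates combine to yield~\eqref{new2.1} across the full range $p\in(0,\infty]$.
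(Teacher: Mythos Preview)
Your argument is correct in outline, but it takes a different route from the paper's, and the $p<1$ branch is sketchier than you suggest.

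The paper does not split at $p=1$ and does not invoke any Plancherel--Polya machinery. Instead it expands the symbol $\sigma_k$ in a Fourier series on $[-R,R]^n$, which turns the operator \emph{exactly} into a sum of translates
\[
\sum_{\mathbf m\in\zz^n} c_{\mathbf m}^k\, S_k h\!\left(x-\tfrac{2\pi}{R}2^{-k}\mathbf m\right),
\]
with $|c_{\mathbf m}^k|\lesssim \lan{\mathbf m}^{-M}+\lan{\mathbf m}^{-\gamma}e^{-c'2^{-k}|\mathbf m|}$. One then applies the $\bar p$-triangle inequality ($\bar p=\min(1,p)$) together with $\lan{x+y}\le\lan{x}\lan{y}$, and the whole lemma reduces to showing that $\sum_{\mathbf m}|c_{\mathbf m}^k|^{\bar p}\lan{2^{-k}\mathbf m}^{a\bar p/p}<\infty$ uniformly in $k$; the exponential factor absorbs the weight exactly as in your argument.

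Your treatment of $p\ge 1$ via weighted Young is genuinely simpler than the paper's unified argument. For $p<1$, however, your pointwise discretization
$|T_k h(y_j)|\lesssim \sum_i |K_k(y_j-y_i)|\,|S_k h(y_i)|\,2^{-kn}$
does not follow from ``slow variation of $K_k$'' alone: discretizing the convolution integral gives suprema over $2^{-k}$-cubes of both $K_k$ and $S_k h$, not point values, and one then needs Plancherel--Polya in both directions (sup control in terms of local $L^p$ averages, and the Riemann-sum equivalence) to close. This can all be made rigorous, but the paper's Fourier-series identity bypasses the issue entirely by producing an exact sum of translates rather than an approximate one. In effect, the Fourier series expansion \emph{is} the sampling theorem, applied on the symbol side where it yields an equality, so no separate Plancherel--Polya black box is needed.
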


\begin{proof}
Expanding $\si_k$ in a Fourier series on the cube $[-R,R]^n$ for each $k\in \nn$, we can write
\begin{equation}\label{eq:fseries}
\si_k (\xi) = \chi_{[-R,R]^n}(\xi) \sum_{\mm\in \zz^n} c_{\mm}^k e^{-\f{2\pi i \mm \cdot \xi}{R}},
\end{equation}
  where
\[
  c_{\mm}^k = C_{R,n} \int_{[-R,R]^n} \si_k(\xi) e^{\f{2\pi i \mm \cdot \xi}{R}}\, d\xi = C_{R,n} \wh{\si_k} \pr{\f{2\pi\mm}{R}}.
\]
The decay assumption for $\wh{\si_k}$ gives
\begin{equation}\label{eq:cmk}
\left|c_{\mm}^k \right| \lesssim_{R,M,n} \lan{\mm}^{-M} +  \lan{\mm}^{-\ga} e^{- c' 2^{-k}|\mm|}
\end{equation}
for some $c'>0$.  Using identity \eqref{eq:fseries}, we can write the integral within the norm in the LHS of \eqref{new2.1} as
\begin{align*}
\int_{\rr^n} \si_k(2^{-k} \xi) \wh{S_k h}(\xi)  e^{i\xi \cdot x}\, d\xi &= 2^{kn} \int_{\rr^n} \si_k (\xi) \Phi (\xi) \wh{h}(2^{k}\xi) e^{i2^{k} \xi\cdot x}\, d\xi\\
 &= 2^{kn}\int_{\rr^n}\sum_{\mm\in \zz^n} c^k_{\mm}  e^{-\f{2\pi i \mm \cdot \xi}{R}} \chi_{[-R,R]^n}(\xi)\Phi (\xi) \wh{h}(2^{k}\xi) e^{i2^{k} \xi\cdot x}\, d\xi \\
 &= \sum_{\mm\in \zz^n} c^k_{\mm} \int_{\rr^n} \Phi (2^{-k}\xi) \wh{h}(\xi) e^{i \xi\cdot \pr{x- \f{2\pi}{R} 2^{-k}\mm }}\, d\xi\\
&= \sum_{\mm\in \zz^n} c^k_{\mm} S_k h\pr{x- \f{2\pi}{R} 2^{-k}\mm }.
\end{align*}
It follows that
\begin{align*}
\n{ \int_{\rr^n} \si_k(2^{-k} \xi) \wh{S_k h}(\xi)  e^{i\xi \cdot (\cdot)}\, d\xi }{L^p_{\lan{x}^a}}^{\bar{p}} &\leq \sum_{\mm\in \zz^n} |c^k_\mm|^{\bar{p}} \n{ S_k h\pr{\cdot - \f{2\pi}{R} 2^{-k}\mm } \lan{\cdot}^{\f{{a}}{p}} }{L^p}^{\bar{p}}\\
 &= \sum_{\mm\in \zz^n} |c^k_\mm|^{\bar{p}} \n{ S_k h   { \lan{\cdot+ \f{2\pi}{R} 2^{-k}\mm  }^{\f{{a}}{p}}} }{L^p}^{\bar{p}}\\
 &\lesssim_R \sum_{\mm\in \zz^n} |c^k_\mm|^{\bar{p}} \n{ S_k h   \lan{\cdot}^{\f{{a}}{p}} \lan{2^{-k}\mm }^{\f{{a}}{p}} }{L^p}^{\bar{p}}\\
 &= \sum_{\mm\in \zz^n} |c^k_\mm|^{\bar{p}} \lan{2^{-k}\mm}^{\f{a\bar{p}}{p}} \n{ S_k h }{L^p_{ \lan{x  }^a}}^{\bar{p}},
\end{align*}
where $\bar{p} = \min(p,1)$.
Applying \eqref{eq:cmk}, the last term above is bounded by
\[
 \sum_{\mm\in \zz^n}\pr{\lan{\mm}^{-M\bar{p}}\lan{2^{-k}\mm}^{\f{a\bar{p}}{p}}  +  \lan{\mm}^{-\ga \bar{p}} e^{- c'\bar{p} 2^{-k}|\mm|}\lan{2^{-k}\mm}^{\f{a\bar{p}}{p}} } \n{ S_k h }{L^p_{ \lan{x  }^a}},\qquad \forall M\gg 1.
\]
The summation involving the first term in the parentheses is convergent for $M>\frac{a}{p}+\frac{n}{\bar p}$ since
\[
\lan{\mm}^{-\bar{p}M} \lan{2^{-k}\mm}^{\f{a\bar{p}}{p}} \leq \lan{\mm}^{-M\bar{p} + \f{a\bar{p}}{p}}.
\]
The second term contains the exponentially decaying factor from Lemma~\ref{le:jsde}.  This exponential factor is particularly useful in treating the translation factor $\lan{2^{-k}\mm}^{\f{a\bar{p}}{p}}$ when we observe that $e^{- c' \bar{p}2^{-k}|\mm|} \lan{2^{-k}\mm}^{\f{a \bar{p}}{p}}$ is uniformly bounded for $k\in \nn$ and $\mm \in \zz^n$.  The boundedness of this expression is what allows the Kato-Ponce estimates to hold even when the underlying spaces are not translation invariant. Without this extra exponential decay (that is, if we were merely applying Lemma~\ref{le:GO}), the decay rate $\ga$ would need to be greater that what is stated in this lemma, which would not be sufficient for the sharp range of $s$.  Hence, we obtain the bound
\[
 \n{ \int_{\rr^n} \si_k(2^{-k} \xi) \wh{S_k h}(\xi)  e^{i\xi \cdot (\cdot)}\, d\xi }{L^p_{\lan{x}^a}}^{\bar{p}}  \lesssim \sum_{\mm\in \zz^n}\pr{\lan{\mm}^{-M\bar{p}+ \f{a\bar{p}}{p}} +  \lan{\mm}^{-\bar{p}\ga}} \n{ S_k h }{L^p_{ \lan{x  }^a}}^{\bar{p}}.
\]
Since $\bar{p}\ga >n$ by our assumption, the series on the RHS converges, which leads to the desired bound.
\end{proof}

The following commutator estimate is also crucial for our proof of the main result.
Lemma~\ref{le:main} will be used to establish the majority of estimates in our proof.  One exception is the commutator estimate, which will require the following version instead.

\begin{lem}\label{le:commutator}
Let $ p\in [\f{1}{2},\infty]$, $1\le p_1,p_2\le \infty$ satisfy $\f{1}{p} = \f{1}{p_1} + \f{1}{p_2}$, and $\f{a}{p} = \f{a_1}{p_1} + \f{a_2}{p_2}$.  If $\{\si_k\}_{k\in \nn}$ is as in Lemma~\ref{le:main},
\begin{equation}\label{3.15}
\n{ \int_0^1 \int_{\rr^{n+n}} \si_k (2^{-k} (t\xi+\eta)) \wh{S_{k-3} f}(\xi) \wh{\De_k g}(\eta) e^{i(\xi+\eta)\cdot x}\, d\xi d\eta dt}{L^p_{\lan{x}^a}}\lesssim  \n{S_{k-3} f}{L^{p_1}_{\lan{x}^{a_1}}} \n{\De_k g}{L^{p_2}_{\lan{x}^{a_2}}},
\end{equation}
where the implicit constant is independent of $k\in \nn$.
\end{lem}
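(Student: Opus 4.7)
The plan is to mimic the Fourier series argument from Lemma~\ref{le:main}, adapting it to the bilinear setting where the symbol $\si_k(2^{-k}(t\xi+\eta))$ depends on two frequency variables and on the parameter $t$. Since $\wh{S_{k-3}f}$ is supported in $\{|\xi|\leq 2^{k-3}\}$ and $\wh{\De_k g}$ in $\{|\eta|\sim 2^k\}$, the argument $2^{-k}(t\xi+\eta)$ lies in a fixed cube $[-R,R]^n$ throughout the integration domain (enlarging $R$ if necessary). We may therefore expand
\[
\si_k(\zeta)=\sum_{\mm\in\zz^n} c_\mm^k\,e^{-\f{2\pi i \mm\cdot\zeta}{R}},\qquad c_\mm^k=C_{R,n}\,\wh{\si_k}\!\left(\tfrac{2\pi\mm}{R}\right),
\]
so that the coefficients obey the same decay estimate \eqref{eq:cmk} as in the proof of Lemma~\ref{le:main}. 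The crucial feature is that the exponential factors as $e^{-\f{2\pi i t \mm\cdot 2^{-k}\xi}{R}}\cdot e^{-\f{2\pi i \mm\cdot 2^{-k}\eta}{R}}$, which lets the $\xi$ and $\eta$ integrals separate.

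After substituting this expansion into the LHS of \eqref{3.15} and interchanging the order of summation and integration, the quantity inside the norm becomes
\[
\sum_{\mm\in\zz^n} c_\mm^k \int_0^1 S_{k-3}f(x-ty_\mm)\,\De_k g(x-y_\mm)\,dt,\qquad y_\mm:=\tfrac{2\pi\mm}{R\,2^k}.
\]
Applying the $\bar p$-quasi-triangle inequality in $\mm$ (with $\bar p=\min(p,1)$) reduces the task to bounding each term $F_\mm(x):=\De_k g(x-y_\mm)\int_0^1 S_{k-3}f(x-ty_\mm)\,dt$ in $L^p_{\lan{x}^a}$.

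The key maneuver for handling the quasi-Banach range $p<1$ is to apply the generalized weighted H\"older inequality \emph{before} doing anything with the $t$-integral. Using $\tfrac{1}{p}=\tfrac{1}{p_1}+\tfrac{1}{p_2}$ and $\tfrac{a}{p}=\tfrac{a_1}{p_1}+\tfrac{a_2}{p_2}$, we split
\[
\n{F_\mm}{L^p_{\lan{x}^a}} \le \n{\int_0^1 S_{k-3}f(\cdot-ty_\mm)\,dt}{L^{p_1}_{\lan{x}^{a_1}}}\n{\De_k g(\cdot-y_\mm)}{L^{p_2}_{\lan{x}^{a_2}}}.
\]
Since $p_1,p_2\in[1,\infty]$ are now honest Banach exponents, Minkowski's integral inequality is admissible in the first factor, giving a bound by $\int_0^1 \n{S_{k-3}f(\cdot-ty_\mm)}{L^{p_1}_{\lan{x}^{a_1}}}\,dt$. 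Translation invariance combined with $\lan{x+y}^a\lesssim\lan{x}^a\lan{y}^a$ then yields
\[
\n{F_\mm}{L^p_{\lan{x}^a}}\lesssim \lan{y_\mm}^{a_1/p_1+a_2/p_2}\n{S_{k-3}f}{L^{p_1}_{\lan{x}^{a_1}}}\n{\De_k g}{L^{p_2}_{\lan{x}^{a_2}}} = \lan{2^{-k}\mm}^{a/p}\n{S_{k-3}f}{L^{p_1}_{\lan{x}^{a_1}}}\n{\De_k g}{L^{p_2}_{\lan{x}^{a_2}}}.
\]

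The main obstacle that this ordering circumvents is precisely that Minkowski's integral inequality fails for $p<1$, so one could not pull the $t$-integral outside an $L^p_{\lan{x}^a}$ quasi-norm directly; doing H\"older first moves the $t$-integration into a genuine Banach norm. With the per-term bound in hand, summation over $\mm$ proceeds exactly as at the end of the proof of Lemma~\ref{le:main}: the decay estimate \eqref{eq:cmk} on $|c_\mm^k|$ together with the uniform boundedness of $e^{-c\bar p\,2^{-k}|\mm|}\lan{2^{-k}\mm}^{a\bar p/p}$ (which is the essential reason the polynomial weights can be absorbed without translation invariance) yields a convergent series provided $\ga\bar p>n$, i.e. $\ga>\max\{n/p,n\}$, and $M$ is chosen sufficiently large. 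This produces \eqref{3.15} with implicit constant independent of $k$.
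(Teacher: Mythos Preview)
Your proposal is correct and follows essentially the same approach as the paper's own proof: Fourier series expansion of $\si_k$ on a cube, separation of the $\xi$ and $\eta$ exponentials to produce translated copies of $S_{k-3}f$ and $\De_k g$, the $\bar p$-quasi-triangle inequality in $\mm$, H\"older's inequality to pass to the Banach exponents $p_1,p_2$, Minkowski to extract the $t$-integral, and then the same summation argument as in Lemma~\ref{le:main}. Your explicit emphasis on applying H\"older before Minkowski (to avoid the failure of Minkowski at $p<1$) matches the paper's reasoning exactly.
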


\begin{proof}
The proof of this lemma will mostly follow the same computation as in the proof of Lemma~\ref{le:main}, so we will highlight only the portions which deviates from the original proof.

Considering the Fourier support due to the projection  operators $S_{k-3}$ and $\Delta_k$, we know that the integrand is supported within the region where $|t\xi + \eta| \leq 2^{k+3}R$ for $t\in [0,1]$.  Hence, we can replace $\si_k$ by its Fourier series expansion. Then, we can write the integral inside the norm on the LHS of the desired inequality as
\[
 \sum_{\mm\in \zz^n} c_{\mm}^k \Delta_k g\pr{x - \f{2\pi}{R}2^{-k} \mm } \int_0^1 S_{k-3} f \pr{x - \f{2\pi}{R}2^{-k}t \mm} \, dt.
\]
Let $\bar{p} = \min (p, 1)$.  Then, taking the $L^p_{\lan{x}^a}$ norm, raising to the $\bar{p}$~th power and applying H\"older's inequality, the LHS of \eqref{3.15} is bounded by
\[
\sum_{\mm \in \zz^n} |c_\mm^k|^{\bar{p}}\n{ \int_0^1 S_{k-3} f \pr{\cdot - \f{2\pi}{R}2^{-k}t \mm} \, dt}{L^{p_1}_{\lan{x}^{a_1}}}^{\bar{p}} \n{\Delta_k g \pr{\cdot - \f{2\pi}{R}2^{-k}t \mm}}{L^{p_2}_{\lan{x}^{a_2}}}^{\bar{p}}.
\]
Since $p_1\geq 1$, we can apply Minkowski's inequality to pull the integral sign out of the $L^{p_1}_{\lan{x}^{a_1}}$ norm.  Applying  \eqref{eq:cmk} to bound the Fourier coefficients, the expression above is bounded by
\[
\sum_{\mm \in \zz^n}  \pr{\lan{\mm}^{-M\bar{p}}\lan{2^{-k}\mm}^{\f{a\bar{p}}{p}}  +  \lan{\mm}^{-\ga \bar{p}} e^{- c'\bar{p} 2^{-k}|\mm|}\lan{2^{-k}\mm}^{\f{a\bar{p}}{p}} }  \lan{ 2^{-k} \mm  }^{\f{a \, \bar{p}}{p}}  \n{ S_{k-3} f }{L^{p_1}_{\lan{x}^{a_1}}}^{\bar{p}} \n{g }{L^{p_2}_{\lan{x}^{a_2}}}^{\bar{p}},
\]
which leads to the desired estimate as in the proof of Lemma~\ref{le:main}.
\end{proof}

Although Lemma~\ref{le:commutator} requires the same decay estimate as Lemma~\ref{le:main}, we will see that, when applying this lemma, the $\{\si_k\}_{k\in \nn}$ will always decay rapidly.  In other words, this lemma is not as important in regards to obtaining the sharp range of $s$ as Lemma~\ref{le:main}.  Rather, the main contribution of this lemma is to enable the low-to-high frequency exchange through a commutator estimate, which produces extra summability to help in the $L^1$ and $L^\infty$ endpoint cases.

Finally, we state the interpolation lemma used in \cite{OW} which helps simplify our computations.  Since the only difference here is the addition of polynomial weights, which does not affect the proof, we omit the proof of the following lemma.

\begin{lem}\label{le:2.6A}
 If $\mathfrak{a}_k \lesssim \min \{2^{k a} A, 2^{-k b} B\}$ for some $a,b,A,B>0$ and any $k\in\zz,$
then,  for any $u>0$,  we have $\{\mathfrak{a}_k\}_{k\in\zz} \in \ell^u (\zz)$ and
\begin{equation*}
\n{\{\mathfrak{a}_k\}_{k\in \zz}}{\ell^u} \lesssim_{a,b,u} A^{\frac{b}{a+b}} B^{\frac{a}{a+b}}.
\end{equation*}
In particular, if for each $k\in \zz$, $\|f_k\|_{L^r_w} \lesssim |\mathfrak{a}_k|$ for some $0<r \le \infty$ and some weight function $w$,  then
$$\n{\sum_{k\in \zz} f_k}{L^r_w}  \lesssim_{a,b,u}  A^{\frac{b}{a+b}} B^{\frac{a}{a+b}}.$$
\end{lem}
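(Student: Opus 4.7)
The plan is to exploit the two competing bounds on $\mathfrak{a}_k$ by splitting the summation at the critical index where the two bounds coincide, and summing two geometric series. Concretely, define $k_0 \in \rr$ by $2^{k_0 a} A = 2^{-k_0 b} B$, so $2^{k_0} = (B/A)^{1/(a+b)}$. For $k \le k_0$ I would use the bound $\mathfrak{a}_k \lesssim 2^{ka} A$, and for $k > k_0$ the bound $\mathfrak{a}_k \lesssim 2^{-kb} B$. Then
\[
\sum_{k\le k_0} |\mathfrak{a}_k|^u \lesssim A^u \sum_{k\le k_0} 2^{kau} \lesssim_{a,u} A^u 2^{k_0 a u}, \qquad \sum_{k>k_0} |\mathfrak{a}_k|^u \lesssim_{b,u} B^u 2^{-k_0 b u},
\]
since both are geometric series (convergent because $a,b,u>0$). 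Substituting $2^{k_0} = (B/A)^{1/(a+b)}$ yields $A^u 2^{k_0 au} = A^{ub/(a+b)} B^{ua/(a+b)}$, and the same expression arises from the other tail. Taking $u$-th roots gives the stated bound
\[
\n{\{\mathfrak{a}_k\}_{k\in\zz}}{\ell^u} \lesssim_{a,b,u} A^{b/(a+b)} B^{a/(a+b)}.
\]

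For the second assertion, I would pick $u = \min(r,1)$. When $r\ge 1$, Minkowski's inequality in $L^r_w$ gives $\n{\sum_k f_k}{L^r_w} \le \sum_k \n{f_k}{L^r_w} \lesssim \sum_k |\mathfrak{a}_k|$, which is the $\ell^1$ norm bound ($u=1$). When $0<r<1$, the quasi-triangle inequality $\n{\sum_k f_k}{L^r_w}^r \le \sum_k \n{f_k}{L^r_w}^r$ reduces matters to bounding the $\ell^r$ norm. In both cases the first part of the lemma delivers the claimed control by $A^{b/(a+b)} B^{a/(a+b)}$.

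There is no real obstacle here: the only subtlety is that the constant depends on $a,b,u$ through the convergence of the geometric series, and that the second assertion requires choosing $u$ appropriately based on whether the target quasi-norm is Banach or quasi-Banach. Since the weight $w$ plays no role beyond entering the definition of the (quasi-)norm on $L^r_w$ which satisfies the same (quasi-)triangle inequality as $L^r$, the presence of polynomial weights introduces no additional difficulty, and the proof is identical to the unweighted version used in \cite{OW}.
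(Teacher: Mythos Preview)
Your proposal is correct and is exactly the standard argument one expects here; the paper in fact omits the proof entirely, deferring to \cite{OW}, and your geometric-series splitting at the crossover index $k_0$ together with the (quasi-)triangle inequality in $L^r_w$ is precisely that argument. There is nothing to add.
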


With these technical lemmas equipped, we are ready to prove Theorem~\ref{main1}.  We divide the proof of this theorem into two parts: Subsection~\ref{sec:inhom2} contains the proof of the positive statement \eqref{eq:1.3}, and Subsection~\ref{sec:inhom3} treats the negative statement which claims the sharpness of the range of $s$.

\subsection{Proof of the Kato-Ponce Inequality in Polynomially Weighted Spaces}\label{sec:inhom2}
We begin with the following paraproduct decomposition:
\[
J^s(fg) = \sum_{k\geq 0} J^s (S_{k-3} f \De_k g) + \sum_{k\geq 0}J^s (\De_k f S_{k-3} g) + \sum_{k\geq 0} J^s ( \De_k f \, \wt{\De_k} g) + J^s (S_0f \, S_0g) =: I_1+ I_2 + I_3 +I_4.
\]
It suffices to establish the following estimates:
\[
\n{I_j}{L^p_{\langle x \rangle^a}} \lesssim   \n{J^s f}{L^{p_1}_{\langle x \rangle^{a_1}}} \n{g}{L^{p_2}_{\langle x \rangle^{a_2}}} + \n{f}{L^{p_1}_{\langle x \rangle^{a_1}}}\n{J^s g}{L^{p_2}_{\langle x \rangle^{a_2}}}
\]
for $j=1,2,3,4$ where all parameters satisfy the conditions stated in Theorem~\ref{main1}.\\

\noindent
\textbf{Estimate for $I_4$:}\\
Denote $h:= S_0f \cdot S_0g$ and note that $\wh{h} =\widehat{S_{3} h}=\widehat{S_{3}^2 h}$.  Then
\begin{align*}
I_4 &=  \int_{\rr^{n+n}} \langle\xi\rangle^{s}\, \Phi(2^{-3} \xi) \,\wh{ S_3 h}(\xi)\, e^{i \xi\cdot x}\, d\xi.
 \end{align*}
Applying Lemma~\ref{le:main} with $\si_k = \si_1 (\cdot) =  \langle\cdot\rangle^{s} \Phi(2^{-3} \cdot) \in C^{\infty}_c (\rr^n)$, H\"{o}lder's inequality and Lemma \ref{Young}, we obtain
\[
\n{I_4}{L^p_{\langle x \rangle^a}} \lesssim \|S_3h\|_{L^p_{\langle x \rangle^a}} \lesssim \n{S_0 f}{L^{p_1}_{\langle x \rangle^{a_1}}} \n{S_0 g}{L^{p_2}_{\langle x \rangle^{a_2}}}. \lesssim  \n{ S_0  f}{L^{p_1}_{\langle x \rangle^{a_1}}} \n{g}{L^{p_2}_{\langle x \rangle^{a_2}}}.
\]
Similar arguments lead to the boundedness of $J^{-s}S_0$ on $L^{p_1}_{\lan{x}^{a_1}}$, which further implies
\begin{equation*}
\n{  S_0 f}{L^{p_1}_{\langle x \rangle^{a_1}}}= \n{ J^{-s} S_0 J^s f}{L^{p_1}_{\langle x \rangle^{a_1}}} \lesssim \n{J^s f}{L^{p_1}_{\lan{x}^{a_1}}}.
\end{equation*}
Combining the above estimates gives the desired bound for $I_4$. \\

\noindent
\textbf{Estimate for $I_3$}:\\
Note that the Fourier transform of this summand is supported within a ball of radius $2^{k + 5}$. Thus, denoting $h := \De_k f \, \wt{\De_k}g$, we can write $\wh h = \wh{S^2_{k+5} h}$ which leads to
\begin{align*}
J^s ( \De_k f \, \wt{\De_k}g)
 &= 2^{ks} \int_{\rr^n} (2^{-2k}+|2^{-k}\xi|^2)^{\f{s}{2}} \,\Phi(2^{-k-5}\xi) \,\wh{ S_{k+5} h}(\xi)  e^{i\xi\cdot x}\, d\xi.
 \end{align*}
We define $\si_k(\cdot) := (2^{-2k}+|\cdot|^2)^{\f{s}{2}} \,\Phi(2^{-5}\cdot)$ in this case, which satisfies by Lemma~\ref{le:jsde}
\[
|\wh{\si_k} (x) | = J^{s}_{2^{-k}} \wh{\Phi} (x) \lesssim_M \lan{x}^{-M} + \lan{x}^{-n-s} e^{-c 2^{-k}|x|}.
\]
Alternatively, if $s \in 2\nn$, we know that $\wh{\si_k}$ decays like a Schwartz function.  Applying Lemma~\ref{le:main} with $\ga = n+s$, and noting that $n+s > \f{n}{p}$ from the assumptions in Theorem~\ref{main1}, we have
\begin{align*}
\|J^s ( \De_k f \, \wt{\De_k}g) \|_{L^p_{\langle x \rangle^a}} &= 2^{ks} \n{\int_{\rr^n} \si_k(2^{-k}\xi) \,\wh{ S_{k+5} h}(\xi)  e^{i\xi\cdot x}\, d\xi}{L^p_{\langle x \rangle^a}}\\
&\lesssim 2^{ks}\n{\De_k f \, \wt{\De_k}g }{L^p_{\langle x \rangle^a}}\\
&\lesssim 2^{ks} \n{\De_k f}{L^{p_1}_{\langle x \rangle^{a_1}}} \|\wt{\De_k}g\|_{L^{p_2}_{\langle x \rangle^{a_2}}}.
\end{align*}
We close the argument by applying Lemma~\ref{le:2.6A}.  First, define
\[
\mathfrak{a}_k := 2^{ks} \n{\De_k f}{L^{p_1}_{\langle x \rangle^{a_1}}} \n{\wt{\De_k} g}{L^{p_2}_{\langle x \rangle^{a_2}}} \qquad \tn{ for } k\in \nn.
\]
By Lemmas \ref{lp} and \ref{Bern} (ii), we get
\[
\mathfrak{a}_{k} \lesssim \min \left\{ 2^{ks} \n{f}{L^{p_1}_{\langle x \rangle^{a_1}}} \,  \n{g}{L^{p_2}_{\langle x \rangle^{a_2}}},\ 2^{-ks} \n{J^s f}{L^{p_1}_{\langle x \rangle^{a_1}}} \,  \n{J^s g}{L^{p_2}_{\langle x \rangle^{a_2}}} \right\}.
\]
Applying Lemma~\ref{le:2.6A} with
\[
a = b  = s, \quad A = \n{f}{L^{p_1}_{\langle x \rangle^{a_1}}} \,  \n{g}{L^{p_2}_{\langle x \rangle^{a_2}}}, \quad B = \n{J^s f}{L^{p_1}_{\langle x \rangle^{a_1}}} \,  \n{J^s g}{L^{p_2}_{\langle x \rangle^{a_2}}},
\]
we have
\begin{align*}
\n{\sum_{k\geq 0} J^s ( \De_k f \, \wt{\De_k}g) }{L^p_{\langle x \rangle^a}} &\lesssim \pr{\n{f}{L^{p_1}_{\langle x \rangle^{a_1}}} \,  \n{g}{L^{p_2}_{\langle x \rangle^{a_2}}}}^{\f{1}{2}} \pr{\n{J^s f}{L^{p_1}_{\langle x \rangle^{a_1}}} \,  \n{J^s g}{L^{p_2}_{\langle x \rangle^{a_2}}}  }^{\f{1}{2}}\\
&\leq  \n{J^s f}{L^{p_1}_{\langle x \rangle^{a_1}}} \n{g}{L^{p_2}_{\langle x \rangle^{a_2}}} + \n{f}{L^{p_1}_{\langle x \rangle^{a_1}}}\n{J^s g}{L^{p_2}_{\langle x \rangle^{a_2}}}.
\end{align*}
This establishes the desired estimate for $I_3$.\\

\noindent
\textbf{Estimates for $I_1$ and $I_2$:}\\
Since $I_1$ and $I_2$ are symmetric, it suffices to show the desired estimate for $I_1$.  This term is not directly summable if using the method of estimating $I_3$, but it becomes summable after the low-to-high frequency exchange as introduced in \cite{BL}.  We begin by writing $I_1$ as follows:
\[
I_1 = \sum_{k\geq 0} [J^s, S_{k-3} f] \De_k g + f J^s \De_{>0} g - \sum_{k\geq 0} \De_{>k-3} f J^s \De_{k} g =: I_{1.1} + f J^s \De_{>0} g - I_{1.2},
\]
where $[J^s, S_{k-3}f]$ is a commutator.  The middle term $f J^s\Delta_{>0}g$ is estimated by the H\"older's inequality when observing that $\De_{>0} = I - S_0$ is bounded in $L^{p_2}_{\lan{x}^{a_{2}}}$.

$I_{1.2}$ can be treated in the same way as $I_3$. Indeed, by H\"{o}lder's inequality as well as Lemma \ref{Bern} (i) and (ii),
$$
\|\Delta_{>k-3} f J^s \Delta_k g\|_{L^p_{\langle x\rangle^a}}  \lesssim \n{\De_{>k-3} f}{L^{p_1}_{\langle x \rangle^{a_1}}} \,  \n{J^s \De_k g}{L^{p_2}_{\langle x \rangle^{a_2}}}  \lesssim  \min\left\{2^{ks} \n{f}{L^{p_1}_{\langle x \rangle^{a_1}}}\n{g}{L^{p_2}_{\langle x \rangle^{a_2}}}, 2^{-ks} \|J^s f\|_{L^{p_1}_{\langle x \rangle^{a_1}}} \|J^s g\|_{L^{p_2}_{\langle x \rangle^{a_2}}}\right\},
$$
which leads to the desired estimate for $I_{1.2}$ after applying Lemma \ref{le:2.6A} as before.

For $I_{1.1}$, we will need the following commutator estimate:
\begin{prop}\label{prop:key2}
Let $a_1,a_2\ge 0$, $p\in [\f{1}{2}, 1]$ and $p_1,p_2\in [1,\infty]$ satisfying $ \f{1}{p_1} + \f{1}{p_2}=\f{1}{p}$ and $\f{a_1}{p_1} + \f{a_2}{p_2} = \f{a}{p}$.  Then, for any $s > 0$ and $k\geq 0$,
\begin{equation*}
\n{[J^s, S_{k-3}f] \De_k g}{L^p_{\langle x \rangle^a}} \lesssim   2^{k(s-1)}  \n{\na S_{k-3} f}{L^{p_1}_{\langle x \rangle^{a_1}}} \n{\De_k g}{L^{p_2}_{\langle x \rangle^{a_2}}}.
\end{equation*}
\end{prop}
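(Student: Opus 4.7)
My plan is to reduce the commutator to a bilinear operator handled by Lemma~\ref{le:commutator}, with the key observation that the rescaled symbol can be made uniformly smooth because the relevant frequency variable lives in an annulus bounded away from the origin. I would first express the commutator on the Fourier side as
\[
[J^s, S_{k-3}f]\De_k g(x) \;=\; c_n\!\int\!\!\int \bigl[\lan{\xi+\eta}^s - \lan{\eta}^s\bigr]\wh{S_{k-3}f}(\xi)\,\wh{\De_k g}(\eta)\,e^{i(\xi+\eta)\cdot x}\,d\xi\,d\eta,
\]
and then apply the fundamental theorem of calculus to $t\mapsto \lan{t\xi+\eta}^s$ to obtain
\[
\lan{\xi+\eta}^s - \lan{\eta}^s \;=\; \sum_{j=1}^n \xi_j \int_0^1 s(t\xi+\eta)_j\lan{t\xi+\eta}^{s-2}\,dt.
\]
Absorbing each factor $\xi_j$ into $\wh{S_{k-3}f}(\xi)$ replaces $S_{k-3}f$ by $-i\,\p_j S_{k-3}f$, which will produce the gradient on the right-hand side of the target estimate.

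Next I would perform a dyadic rescaling. Since $\supp\wh{S_{k-3}f}\subset\{|\xi|\le 2^{k-2}\}$ and $\supp\wh{\De_k g}\subset\{|\eta|\sim 2^k\}$, the vector $\omega:=2^{-k}(t\xi+\eta)$ lies in a fixed annulus $\{c_1\le|\omega|\le c_2\}$ with $c_1>0$, independently of $t\in[0,1]$ and $k\ge 0$. Fixing $\phi_0\in C_c^\infty(\rr^n\setminus\{0\})$ with $\phi_0\equiv 1$ on this annulus, I set
\[
\si_k^j(\omega)\;:=\;s\,\omega_j\bigl(2^{-2k}+|\omega|^2\bigr)^{(s-2)/2}\phi_0(\omega),
\]
so that the algebraic identity $s(t\xi+\eta)_j\lan{t\xi+\eta}^{s-2}=2^{k(s-1)}\si_k^j(2^{-k}(t\xi+\eta))$ holds on the Fourier support of the integrand. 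The commutator thereby becomes $2^{k(s-1)}$ times a sum over $j$ of exactly the bilinear form treated in Lemma~\ref{le:commutator}, with $\p_j S_{k-3}f$ replacing $S_{k-3}f$ (the Fourier support is unchanged).

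The remaining step is to verify the hypotheses of Lemma~\ref{le:commutator} for $\{\si_k^j\}_{k\in\nn}$. Because $\phi_0$ is supported away from the origin, $(2^{-2k}+|\omega|^2)^{(s-2)/2}$ is smooth on $\supp\phi_0$ with every derivative bounded uniformly in $k\ge 0$; hence $\si_k^j$ is a compactly supported $C^\infty$ function with $k$-uniform bounds on all derivatives. Its Fourier transform therefore decays faster than any polynomial, uniformly in $k$, trivially satisfying the decay hypothesis of Lemma~\ref{le:commutator} for any $\ga$. Applying that lemma and summing over $j\in\{1,\dots,n\}$ yields
\[
\n{[J^s,S_{k-3}f]\De_k g}{L^p_{\lan{x}^a}}\;\lesssim\; 2^{k(s-1)}\n{\na S_{k-3}f}{L^{p_1}_{\lan{x}^{a_1}}}\n{\De_k g}{L^{p_2}_{\lan{x}^{a_2}}}.
\]
The principal conceptual obstacle is that one must exploit the \emph{annular} rather than ball structure of $\supp\wh{\De_k g}$ to keep $|\omega|$ away from the potential singularity of $(2^{-2k}+|\omega|^2)^{(s-2)/2}$ at $\omega=0$ when $s<2$; without this observation, the natural alternative via integration by parts in $\omega$ would pick up spurious factors of $x$ in the kernel estimate, yielding only $\ga=n+s-1$ and restricting the argument to the much smaller range $s>1+n(1/p-1)$ rather than the full range $s>0$.
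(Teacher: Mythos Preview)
Your proof is correct and follows essentially the same approach as the paper: both write the commutator on the Fourier side, apply the fundamental theorem of calculus to $t\mapsto\lan{t\xi+\eta}^s$, exploit the annular support of $\wh{\De_k g}$ to introduce a smooth cutoff $\psi$ (your $\phi_0$) supported away from the origin, and then apply Lemma~\ref{le:commutator} with the family $\si_k(\cdot)=(\cdot)(2^{-2k}+|\cdot|^2)^{(s-2)/2}\psi(\cdot)$, whose Fourier transform decays rapidly uniformly in $k$. The only cosmetic difference is that you treat the gradient componentwise and sum over $j$, whereas the paper handles it vectorially.
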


\begin{proof}
We begin by writing
\begin{align*}
[J^s, S_{k-3}f] \De_k g
&=\int_{\rr^{n+n}} \pr{\langle\xi+\eta\rangle^s - \langle\eta\rangle^s}  \wh{S_{k-3} f}(\xi)  \wh{\De_k g}(\eta) e^{i(\xi+\eta)\cdot x}\, d\xi \, d\eta\\
&=\int_{\rr^{n+n}} \pr{\int_0^1s \xi \cdot (t\xi+\eta) \langle t\xi + \eta\rangle^{s-2} dt}  \wh{S_{k-3} f}(\xi) \wh{\De_k g}(\eta) e^{i(\xi+\eta)\cdot x}\, d\xi \, d\eta.
\end{align*}
Noticing that the support of the integrand forces
$
\langle t\xi+\eta\rangle\sim |\eta|\sim 2^k
$
uniformly in $t\in [0,1],$
we may choose $\psi\in \cs(\rr^n)$ supported on an annulus so that $[J^s, S_{k-3}f] \De_k g$ can be written as (up to a constant)
\begin{align*}
\int_0^1\int_{\rr^{n+n}} \xi \cdot &(t\xi+\eta)\langle  t\xi+\eta\rangle^{s-2} \psi(2^{-k} (t\xi+\eta)) \wh{S_{k-3} f}(\xi) \wh{\De_k g}(\eta) e^{i(\xi+\eta)\cdot x}\, d\xi \, d\eta\, dt\\
&=
  2^{k(s-1)} \int_0^1 \int_{\rr^{n+n}}  \si_k(2^{-k} (t\xi+\eta))\cdot  \wh{\na S_{k-3} f}(\xi) \wh{\De_k g}(\eta) e^{i(\xi+\eta)\cdot x}\, d\xi \, d\eta\, dt,
\end{align*}
where $\{\si_k\}_{k\in \bn} :=\{(\cdot) (2^{-2k}+|\cdot|^2)^{\f{s-2}{2}} \psi(\cdot)\}_{k\in \bn}$ is a family of compactly supported $C^\infty$ functions.  Direct computations show that the partial derivatives of $\si_k$ is uniformly bounded in $k\in \nn$.  Using arguments from the proof of Lemma~\ref{Bern}(i), we can see that $\abs{\wh{\si_k}(x)} \lesssim_M \lan{x}^{-M}$ for any $M \gg 1$ where the implicit constant is independent of $k\in \nn$.  Hence, we can apply Lemma~\ref{le:commutator} to obtain the desired estimate.
\end{proof}

We continue to estimate $I_{1.1}$. Lemma~\ref{Bern} (iv) gives us that, for $0<\ve\leq \min(1,s)$,
\begin{align*}
\begin{split}
\| \nabla S_{k-3}f \|_{L^{p_1}_{\langle x \rangle^{a_1}}}&
\lesssim 2^{k(1-\ve)} \|f\|_{L^{p_1}_{\langle x \rangle^{a_1}}}^{\frac{s-\ve}{s}} \|J^s f\|_{L^{p_1}_{\langle x \rangle^{a_1}}}^{\frac{\ve}{s}}.
\end{split}
\end{align*}
From this and Proposition \ref{prop:key2}, it follows that
\begin{equation*}
\n{[J^s, S_{k-3}f] \De_k g}{L^p_{\langle x \rangle^a}} \lesssim  \min\left\{  2^{ks}  \n{f}{L^{p_1}_{\langle x \rangle^{a_1}}} \n{g}{L^{p_2}_{\langle x \rangle^{a_2}}},  \, 2^{-k\ve}  \n{f}{L^{p_1}_{\langle x \rangle^{a_1}}}^{\frac{s-\ve}{s}}\n{J^sf}{L^{p_2}_{\langle x \rangle^{a_1}}}^{\frac{\ve}{s}} \n{J^s g}{L^{p_2}_{\langle x \rangle^{a_2}}} \right\}.
\end{equation*}
Applying Lemma~\ref{le:2.6A} with $a=s$, $b=\ve$ as well as
\[
 A = \n{f}{L^{p_1}_{\langle x \rangle^{a_1}}} \,  \n{g}{L^{p_2}_{\langle x \rangle^{a_2}}}, \quad B = \n{f}{L^{p_1}_{\langle x \rangle^{a_1}}}^{\frac{s-\ve}{s}}\n{J^sf}{L^{p_2}_{\langle x \rangle^{a_1}}}^{\frac{\ve}{s}} \n{J^s g}{L^{p_2}_{\langle x \rangle^{a_2}}}
\]
leads to
\begin{align*}
\|I_{1.1}\|_{L^p_{\langle x \rangle^a}} &\lesssim \pr{\n{f}{L^{p_1}_{\langle x \rangle^{a_1}}} \n{ g}{L^{p_2}_{\langle x \rangle^{a_2}}} }^{\frac{\ve}{s+\ve}}  \pr{ \n{J^s  g}{L^{p_2}_{\langle x \rangle^{a_2}}}  \n{f}{L^{p_1}_{\langle x \rangle^{a_1}}}^{\frac{s-\ve}{s}}\n{J^sf}{L^{p_1}_{\langle x \rangle^{a_1}}}^{\frac{\ve}{s}}}^{\f{s}{s+\ve}}\\
&= \pr{\|J^s g\|_{L^{p_2}_{ \langle x\rangle^{a_2}}} \|f\|_{L^{p_1}_{\langle x \rangle^{a_1}}}}^{\frac{s}{s+\ve}} \pr{\|J^s f\|_{L^{p_1}_{\langle x \rangle^{a_1}}}
\|g\|_{L^{p_2}_{\langle x \rangle^{a_2}}}}^{\frac{\ve}{s+\ve}} \\
&  \lesssim \|f\|_{L^{p_1}_{\langle x \rangle^{a_1}}} \|J^s g\|_{L^{p_2}_{\langle x \rangle^{a_2}}}+ \|J^s f\|_{L^{p_1}_{\langle x \rangle^{a_1}}} \|g\|_{L^{p_2}_{\langle x \rangle^{a_2}}}.
\end{align*}
This leads to the desired estimate for $I_1$.

Putting together the estimates obtained above for $I_1,I_2,I_3$ and $I_4$, we conclude the proof of \eqref{eq:1.3}.

\subsection{Proof of the Sharpness of the Range of $s$}\label{sec:inhom3}

In this section, we want to show that the range of $s$ in Theorem~\ref{main1} is sharp by deriving a contradiction to \eqref{eq:1.3} when  $\ds s \leq  \max\left\{n \pr{\f{1}{p} -1},0\right\}$ and $s \not\in 2\nn$.

For $s< 0$, we will verify that the counterexample from \cite{GO} still works in the weighted setting.  For $k\gg 1,$ let
\[
f(x)=f_k(x)=e^{i2^k e_1\cdot x} \wh{\Phi}(x),\qquad g(x)=g_k(x)=e^{-i2^k e_1\cdot x} \wh{\Phi}(x),
\]
where $e_1=(1,0,\ldots,0)\in \rr^n$ and $\Phi,\Psi\in \S(\rr^n)$ are as defined in Section~\ref{sec:prelim}.  From  \eqref{eq:1.3}, only two terms explicitly depend on $k$: namely, $\n{J^s f}{L^{p_1}_{\lan{x}^{a_1}}}$ and  $\n{J^s g}{L^{p_2}_{\lan{x}^{a_2}}}$.  We can write
\begin{align*}
    J^s f(x) & =\int_{\rr^n} \lan{\xi}^s \Phi(\xi-2^k e_1) e^{i\xi\cdot x}d\xi\\
    & =\int_{\rr^n}  2^{ks} \Psi^s_k( 2^{-k}\xi) \Phi(\xi-2^k e_1) e^{i\xi\cdot x}d\xi,
\end{align*}
where $\Psi^s_k(\xi)=(2^{-2k}+|\xi|^2)^{\frac{s}{2}}\Psi(\xi)$ is as defined during the proof of Lemma~\ref{Bern}(i).  By Lemma~\ref{Young},
\[
\|J^s f\|_{L^{p_1}_{\lan{x}^{a_1}}}\lesssim 2^{ks} \n{2^{kn} \wh{\Psi^s_k}(2^k\cdot)}{L^1_{\lan{x}^{a_1}}} \n{\wh{\Phi}}{L^{p_1}_{\lan{x}^{a_1}}}.
\]
We have shown in \eqref{eq:psisj} that the first norm on the RHS above is uniformly bounded in $k$.  Since  $\n{J^s g}{L^{p_2}_{\lan{x}^{a_2}}}$ can be treated analogously, we can see that the RHS of \eqref{eq:1.3} approaches zero as $k\to \infty$ when $s<0$, while the LHS remains a positive constant.  This leads to a contradiction.

Next, we assume $0< s\leq n\pr{\f{1}{p} - 1}$ and $s\not\in 2\nn$.   In this case, we will need the exponential decay estimate derived in Lemma~\ref{le:jsde} to extend the counterexample from \cite{GO} to weighted setting.

Let $f,g \in C^{\infty}_c (\rr^n)$ be nonzero nonnegative radial functions.  For any $\de \in (0,1]$, we define $f_\de (x) = f\pr{\f{x}{\de}}$ and similarly $g_\de(x) = g\pr{\f{x}{\de}}$.
Assuming \eqref{eq:1.3} is true, we must have
\begin{equation} \label{sharp}
\n{J^s (f_\de g_\de)}{L^p_{ \langle x\rangle^a
}} \lesssim  \n{J^s f_\de}{L^{p_1}_{\langle x \rangle^{a_1}}} \n{g_\de}{L^{p_2}_{\langle x \rangle^{a_2}}} + \n{f_\de}{L^{p_1}_{\langle x \rangle^{a_1}}}\n{J^s g_\de}{L^{p_2}_{\langle x \rangle^{a_2}}},
\end{equation}
which is equivalent to
\begin{equation} \label{sharp3}
\n{J^s_\de (f g)}{L^p_{ \langle \de x \rangle^a
}} \lesssim  \n{J^s_\de f}{L^{p_1}_{\langle \de x \rangle^{a_1}}} \n{g}{L^{p_2}_{\langle \de  x \rangle^{a_2}}} + \n{f}{L^{p_1}_{\langle \de  x \rangle^{a_1}}}\n{J^s_\de g}{L^{p_2}_{\langle \de  x\rangle^{a_2}}},
\end{equation}
where the implicit constant is independent of $\de>0$.

First, we will show that the RHS of \eqref{sharp3} is uniformly bounded in $\delta\in (0,1]$.
We begin by finding an upper bound for $\n{J_\de^s f}{L^{p_1}_{\langle \de x \rangle^{a_1}}}$. Applying Lemma~\ref{le:jsde}, we can write
\begin{align*}
\n{J_\de^s f}{L^{p_1}_{\langle \de x \rangle^{a_1}}}^{p_1} &\lesssim  \int_{\rr^n}  \lan{x}^{-p_1 M}\lan{\de x}^{a_1} + \lan{x}^{-(n+s)p_1} e^{-cp_1 \de |x|}\lan{\de x}^{a_1} \, dx,\quad \forall M\gg 1.
\end{align*}
Taking $M$ sufficiently large and noticing that $\n{e^{-c p_1 (\cdot)} \lan{\cdot}^{a_1}}{L^{\infty}} \lesssim 1$ and $(n+s)p_1 \geq n+s >n$, we see that the integral on the RHS above is uniformly bounded in $\delta\in (0,1]$.
An upper bound for $\n{g}{L^{p_2}_{\lan{\de x}^{a_2}}}$ is much easier to find since $\n{g}{L^{p_2}_{\langle \de x \rangle^{a_2}}} \lesssim  \n{g}{L^{p_2}_{\langle  x \rangle^{a_2}}}$.  By symmetry, the second term on the RHS of \eqref{sharp3} is uniformly bounded in $\de \in (0,1]$.  Putting together the estimates obtained above, the RHS of \eqref{sharp3} is uniformly bounded in $\de \in (0,1]$.

Next, we will show that the LHS of \eqref{sharp3} is not uniformly bounded in $\delta\in (0,1]$. Applying the Lebesgue dominated convergence, we can show that $\abs{J^s_\de(fg) (x)}^p \lan{\de x}^a \to \abs{D^s(fg)(x)}^p$ pointwise as $\de \to 0$.
Then, Fatou's lemma tells us that
\begin{equation} \label{eq:dsfg}
\liminf_{\de\to 0}  \int_{\rr^n} |J^s_\de(fg)(x)|^{p} \lan{\de x}^{a}\,dx \geq  \int_{\rr^n} |D^s(fg)(x)|^{p} \,dx.
\end{equation}
By our choice of $f$ and $g$, Lemma~\ref{le:GO} implies $|D^s(fg)|(x) \gtrsim |x|^{-n-s}$ for $|x|\gg 1$.    Hence, the integrand on the RHS of \eqref{eq:dsfg} is bounded from below by $|x|^{-(n+s)p} \chi_{|x|\gg 1}(x)$, which implies the divergence of the integral on the RHS of \eqref{eq:dsfg} since
\[
0 < s \leq n\pr{\f{1}{p} - 1} \implies -(n+s)p \geq -n.
\]
 This tells us that the LHS of \eqref{sharp3} cannot be uniformly bounded in $\de \in (0,1]$, which leads to a contradiction.  Therefore, \eqref{eq:1.3} cannot be true for $0<s \leq n\pr{\f{1}{p} - 1}$ and $s\not\in 2\nn$.  This proves the sharpness of the range of $s$ in Theorem~\ref{main1}.

\section{Commutator Estimates with Polynomial Weights}\label{sec:com}
While a large part of the proof of the Kato-Ponce commutator estimates is similar to that of the Kato-Ponce inequality in the previous section, a few components of the proof are significantly different.  This is somewhat expected considering that a counterexample to the $L^\infty$-endpoint given in \cite{BL} necessitates a more complicated form of the inequality, namely \eqref{eq:new1.5}, when $s>1$ and $s\not\in 2\nn$.

The proof of Theorem~\ref{main-comm} is organized similarly as in the previous section.  We begin by stating variants of several technical lemmas in Subsection~\ref{sec:inhom1}.

\subsection{Lemmas}\label{sec:com1}
 In \cite{BL}, the authors used the following interpolation inequality to prove the $L^\infty$-endpoint commutator estimate:
\begin{align*}
\|D^{\theta s} f\|_{L^r} \le \|D^{s} f\|_{L^p}^{\theta} \| f\|_{L^q}^{1-\theta},
\end{align*}
where $s\ge 0$, $0<\theta<1$, $1<p,q<\infty$, $1< r \leq \infty$ satisfy
$ \frac{1}{r}=\frac{\theta}{p}+\frac{1-\theta}{q}$.  However, this interpolation inequality fails when $r=1$, which we will need to obtain the $L^1$~endpoint commutator estimate.  For $0<s<1$, the following localized version can replace this interpolation inequality.

\begin{lem}\label{lem:inter}
Let $0\leq s < 1$, $1\le r\le \infty$, $a\geq 0,$ $ 0< \theta \leq 1$, and $k\in \nn$. For any $f\in \S(\rr^n)$,
\begin{align*}
\| S_k f\|_{L^r_{\langle x\rangle^a}} &\lesssim   2^{k\theta(1-s)}  \|J^{s-1} f\|_{L^r_{\langle x\rangle^a}}^{\theta} \| f\|_{L^r_{\langle x\rangle^a}}^{1-\theta},\\
\| \nabla S_k f\|_{L^r_{\langle x\rangle^a}} &\lesssim   2^{k\theta(1-s)}  \|J^{s} f\|_{L^r_{\langle x\rangle^a}}^{\theta} \| \nabla f\|_{L^r_{\langle x\rangle^a}}^{1-\theta}.
\end{align*}
\end{lem}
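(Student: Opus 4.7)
The plan is to reduce both inequalities to two endpoint estimates — the trivial $\theta\to 0$ uniform bound and a Bernstein-type $\theta = 1$ gain of $2^{k(1-s)}$ — and then combine them by multiplicative interpolation. Since both endpoints control the \emph{same} left-hand side, this interpolation is just the algebraic identity
\[
\|S_k f\|_{L^r_{\lan{x}^a}} = \|S_k f\|_{L^r_{\lan{x}^a}}^{\theta}\, \|S_k f\|_{L^r_{\lan{x}^a}}^{1-\theta} \leq \bigl(2^{k(1-s)}\|J^{s-1} f\|_{L^r_{\lan{x}^a}}\bigr)^{\theta}\, \|f\|_{L^r_{\lan{x}^a}}^{1-\theta},
\]
and analogously for $\nabla S_k f$; no complex-interpolation machinery is required once the two endpoints are in hand.

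For the first inequality, the $\theta\to 0$ endpoint $\|S_k f\|_{L^r_{\lan{x}^a}}\lesssim \|f\|_{L^r_{\lan{x}^a}}$ is immediate from Lemma~\ref{lp}. For the $\theta=1$ endpoint, I would write $S_k f = (S_k J^{1-s})\, J^{s-1} f$ and apply Lemma~\ref{Bern}(iii) with exponent $1-s>0$ (admissible because $s<1$) to conclude $\|S_k f\|_{L^r_{\lan{x}^a}}\lesssim 2^{k(1-s)} \|J^{s-1} f\|_{L^r_{\lan{x}^a}}$, which is exactly the desired bound.

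The second inequality follows the same template. The $\theta\to 0$ endpoint is obtained by commuting $\nabla$ past $S_k$ (both are Fourier multipliers) and invoking Lemma~\ref{lp}: $\|\nabla S_k f\|_{L^r_{\lan{x}^a}} = \|S_k \nabla f\|_{L^r_{\lan{x}^a}} \lesssim \|\nabla f\|_{L^r_{\lan{x}^a}}$. The $\theta=1$ endpoint $\|\nabla S_k f\|_{L^r_{\lan{x}^a}}\lesssim 2^{k(1-s)} \|J^s f\|_{L^r_{\lan{x}^a}}$ is proved by decomposing $S_k = S_0 + \sum_{j=1}^{k} \De_j$ as in Lemma~\ref{Bern}(iv): the weighted Bernstein computation \eqref{eq:conv} gives $\|\nabla \De_j f\|_{L^r_{\lan{x}^a}} \lesssim 2^j \|\De_j f\|_{L^r_{\lan{x}^a}}$, while Lemma~\ref{Bern}(ii) yields $\|\De_j f\|_{L^r_{\lan{x}^a}} \sim 2^{-js}\|\De_j J^s f\|_{L^r_{\lan{x}^a}} \lesssim 2^{-js} \|J^s f\|_{L^r_{\lan{x}^a}}$. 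Combining these and summing $j=0,\ldots,k$ — a geometric series convergent since $1-s>0$ — together with the low-frequency term $\nabla S_0 f$ (bounded directly via Lemma~\ref{Young} applied to the fixed Schwartz kernel of $\nabla S_0 J^{-s}$) produces the endpoint.

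The only subtle point is that one cannot simply reduce the second inequality to the first through a Riesz-transform identity such as $\nabla = R\,(-\Delta)^{1/2}$, because Riesz transforms are unbounded on $L^r_{\lan{x}^a}$ at the endpoints $r\in\{1,\infty\}$ which must be covered here. This forces the paraproduct-plus-Bernstein computation sketched above, and is the one place where genuine care is required; everything else is bookkeeping around existing lemmas from Subsection~\ref{sec:inhom1}.
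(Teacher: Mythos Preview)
Your proposal is correct and follows essentially the same approach as the paper: for the first inequality both you and the paper split $\|S_k f\| = \|S_k f\|^\theta\|S_k f\|^{1-\theta}$ and bound the factors via Lemma~\ref{Bern}(iii) and Lemma~\ref{lp}; for the second inequality both arguments use the decomposition $S_k = S_0 + \sum_{j\le k}\De_j$ together with \eqref{eq:conv} and Lemma~\ref{Bern}(ii) to avoid the Riesz-transform issue you correctly flag. The only cosmetic difference is that the paper interpolates piece-by-piece (bounding each $\|\nabla\De_j f\|$ by $2^{j\theta(1-s)}\|J^s f\|^\theta\|\nabla f\|^{1-\theta}$ before summing), whereas you establish the $\theta=0$ and $\theta=1$ endpoints globally and multiply at the end; the underlying estimates are identical.
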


\begin{proof}
The first inequality follows immediately from Lemmas~\ref{Bern}(iii) and \ref{Young}:
\[
  \|S_k f\|_{L^r_{\langle x\rangle^a}} =  \|J^{1-s} S_k J^{s-1} f\|_{L^r_{\langle x\rangle^a}}^{\theta}   \|S_k f\|_{L^r_{\langle x\rangle^a}}^{1-\theta} \lesssim 2^{k\theta(1-s)}  \n{J^{s-1} f}{L^r_{\langle x\rangle^a}}^{\theta} \n{ f}{L^r_{\langle x\rangle^a}}^{1-\theta}.
\]

The second inequality appears the same as the first one when we replace $f$ by $\na f$.  However, in doing so, we end up with $J^{s-1} \na f$ on the RHS rather than $J^s f$.  This can be separately treated by going through the proof of Lemma~\ref{Bern}(iii).  As in that proof, we can write
\begin{equation}\label{eq:le4}
\n{\na S_k f}{L^r_{\langle x\rangle^a}} \lesssim \n{\na S_0 f}{L^r_{\langle x\rangle^a}} + \sum_{j=0}^k \n{\na \De_j f}{L^r_{\langle x\rangle^a}}.
\end{equation}
By Lemma~\ref{Young}, we know that both $S_0$ and $J^{-s}\na S_0$ are bounded on $L^r_{\lan{x}^a}$, which gives
\[
\n{\na S_0 f}{L^r_{\langle x\rangle^a}} = \n{J^{-s}\na S_0 J^s f}{L^r_{\langle x\rangle^a}}^{\theta}\n{S_0 \na f}{L^r_{\langle x\rangle^a}}^{1-\theta} \lesssim \n{J^s f}{L^r_{\langle x\rangle^a}}^{\theta}\n{ \na f}{L^r_{\langle x\rangle^a}}^{1-\theta}.
\]
From \eqref{eq:conv}, Lemmas~\ref{Bern} (ii) and \ref{lp}, we get that, for $j\in \nn$,
\[
\n{\na \De_j f}{L^r_{\langle x\rangle^a}} \lesssim 2^{j\theta} \n{\De_j f}{L^r_{\langle x\rangle^a}}^{\theta} \n{\De_j \na f}{L^r_{\langle x\rangle^a}}^{1-\theta} \lesssim 2^{j\theta(1-s)}  \n{ J^s f}{L^r_{\langle x\rangle^a}}^{\theta} \n{\na f}{L^r_{\langle x\rangle^a}}^{1-\theta},
\]
where the implicit constant is independent of $j.$
Since $\theta(1-s)>0$ by our hypothesis, substituting these bounds to \eqref{eq:le4} yields the second inequality.
\end{proof}

For $s<1$, the lemma above, along with other lemmas already introduced in Subsection~\ref{sec:inhom1}, is sufficient to prove the commutator estimate, as we will see in Subsection~\ref{sec:com2}.  For $s>1$, we will need additional estimates resembling the ones given in Lemma~\ref{Bern}, mainly to handle the local differential operator $\na$.

\begin{lem}\label{lem:4.2}
Let $s>1$, $p\in [1,\infty]$, $k \in \nn$ and $a\geq 0$.
\begin{enumerate}
\item[(i)] $\ds   \n{\De_k f}{L^{p}_{\lan{x}^{a}}} \sim 2^{-k}\n{\na \De_k f}{L^{p}_{\lan{x}^{a}}}.$

\item[(ii)] $\ds    \n{\na \De_{>k} f}{L^{p}_{\lan{x}^{a}}} \lesssim 2^{k(1-s)}\n{J^s f}{L^{p}_{\lan{x}^{a}}}.$

\item[(iii)] For any $0\leq \ve< \min(1,s-1)$,
\[
\n{|\na|^2 S_{k-3} f }{L^{p_1}_{\langle x \rangle^{a_1}}}  \lesssim 2^{k(1-\ve)} \n{\na f }{L^{p_1}_{\langle x \rangle^{a_1}}}^{\f{s-1-\ve}{s-1}}\n{J^{s} f }{L^{p_1}_{\langle x \rangle^{a_1}}}^{\f{\ve}{s-1}}.
\]
\end{enumerate}
\end{lem}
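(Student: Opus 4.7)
The proof divides naturally into the three parts, all relying on the weighted Young inequality (Lemma~\ref{Young}) together with Lemmas~\ref{lp} and \ref{Bern}. The guiding principle is that, since Riesz transforms are unbounded on $L^p_{\lan{x}^a}$ for sufficiently large $a$, one cannot shortcut any step by an inequality such as $\n{J^{s-1}\na f}{L^{p_1}_{\lan{x}^{a_1}}} \lesssim \n{J^s f}{L^{p_1}_{\lan{x}^{a_1}}}$; instead, every multiplier must reduce to a (suitably scaled) Schwartz-kernel convolution so that Lemma~\ref{Young} applies directly with no $A_p$ or $\A_{\vec p}$ hypothesis on the weight.

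For (i), both directions are scaling-invariant Bernstein-type estimates. The forward bound $\n{\na \De_k f}{L^p_{\lan{x}^a}} \lesssim 2^k \n{\De_k f}{L^p_{\lan{x}^a}}$ follows from writing $\na \De_k f = (2^k\cdot 2^{kn}\check{m}(2^k\cdot)) * \De_k f$, with $m(\eta) = i\eta\,\wt{\Psi}(\eta) \in C_c^\infty(\rr^n)$ (where $\wt{\Psi}\equiv 1$ on $\tn{supp}\,\Psi$), and applying Lemma~\ref{Young}. The reverse estimate uses $|\xi|^2 = \sum_j\xi_j^2$ to express
\[
\De_k f = \sum_j K_{k,j} * \p_j\De_k f, \qquad \wh{K_{k,j}}(\xi) = \f{-i\xi_j}{|\xi|^2}\wt{\Psi}(2^{-k}\xi) = 2^{-k}\wt{m}_j(2^{-k}\xi),
\]
where $\wt{m}_j(\eta) = -i\eta_j|\eta|^{-2}\wt{\Psi}(\eta)$ lies in $C_c^\infty(\rr^n)$ because $\wt{\Psi}$ is supported on an annulus bounded away from the origin; a second application of Lemma~\ref{Young} then yields $\n{\De_k f}{L^p_{\lan{x}^a}} \lesssim 2^{-k}\n{\na\De_k f}{L^p_{\lan{x}^a}}$.

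For (ii), I would chain (i) with Lemmas~\ref{Bern}(ii) and \ref{lp}:
\[
\n{\na\De_j f}{L^p_{\lan{x}^a}} \lesssim 2^j\n{\De_j f}{L^p_{\lan{x}^a}} \sim 2^{j(1-s)}\n{\De_j J^s f}{L^p_{\lan{x}^a}} \lesssim 2^{j(1-s)}\n{J^s f}{L^p_{\lan{x}^a}},
\]
and sum $\sum_{j>k}2^{j(1-s)} \lesssim 2^{k(1-s)}$, which converges since $s>1$.

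For (iii), reading $|\na|^2 = -\De$ and decomposing $S_{k-3} = S_0 + \sum_{j=1}^{k-3}\De_j$ as in the proof of Lemma~\ref{Bern}(iv), two applications of (i) give $\n{|\na|^2\De_j f}{L^{p_1}_{\lan{x}^{a_1}}} \lesssim 2^{2j}\n{\De_j f}{L^{p_1}_{\lan{x}^{a_1}}}$. I would then bound $\n{\De_j f}{L^{p_1}_{\lan{x}^{a_1}}}$ in two complementary ways: by $2^{-j}\n{\na f}{L^{p_1}_{\lan{x}^{a_1}}}$ (via (i), the commutation $[\na,\De_j]=0$, and Lemma~\ref{lp}) and by $2^{-js}\n{J^s f}{L^{p_1}_{\lan{x}^{a_1}}}$ (via Lemmas~\ref{Bern}(ii) and \ref{lp}), and interpolate with $\theta = \ve/(s-1)\in[0,1)$ to obtain
\[
\n{|\na|^2\De_j f}{L^{p_1}_{\lan{x}^{a_1}}} \lesssim 2^{j(1-\ve)} \n{\na f}{L^{p_1}_{\lan{x}^{a_1}}}^{\f{s-1-\ve}{s-1}}\n{J^s f}{L^{p_1}_{\lan{x}^{a_1}}}^{\f{\ve}{s-1}}.
\]
Summing $\sum_{j=1}^{k-3} 2^{j(1-\ve)}$ (convergent since $\ve<1$) yields exactly the required $2^{k(1-\ve)}$ factor. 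The low-frequency term $|\na|^2 S_0 f$ is handled directly, since both $|\na|^2 S_0$ and $|\na|^2 S_0 J^{-s}$ are convolutions with Schwartz kernels (their Fourier symbols are $C_c^\infty$); Lemma~\ref{Young} gives $\n{|\na|^2 S_0 f}{L^{p_1}_{\lan{x}^{a_1}}} \lesssim \min\{\n{\na f}{L^{p_1}_{\lan{x}^{a_1}}},\n{J^s f}{L^{p_1}_{\lan{x}^{a_1}}}\}$, and the same interpolation absorbs this piece into the same bound, uniformly in $k\in\nn$.
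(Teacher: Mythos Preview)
Your proposal is correct and follows essentially the same route as the paper: part (i) via scaled Schwartz-kernel convolutions and Lemma~\ref{Young}, part (ii) by summing the $j$-pieces using (i) and Lemma~\ref{Bern}(ii), and part (iii) by re-running the proof of Lemma~\ref{Bern}(iv) with $f$ replaced by $\na f$ and $s$ by $s-1$ (the paper states this more tersely but it is the same decomposition $S_{k-3}=S_0+\sum_j \De_j$ followed by interpolation). The only imprecision is in your low-frequency step: to obtain $\n{|\na|^2 S_0 f}{L^{p_1}_{\lan{x}^{a_1}}}\lesssim \n{\na f}{L^{p_1}_{\lan{x}^{a_1}}}$ you need the Schwartz kernel of $\na S_0$ (applied to $\na f$), not of $|\na|^2 S_0$ itself, which is exactly the ``$\na J^{-1}S_0$ bounded'' adjustment the paper invokes.
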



\begin{proof}

Inequality (i) has two directions, one of which was already proved in \eqref{eq:conv}.  To prove the converse direction, write
\begin{align*}
\De_k f(x) &= \int_{\rr^n} \Psi(2^{-k} \xi)\wh{f}(\xi)e^{i\xi\cdot x}\, d\xi\\
	&= \int_{\rr^n} \pr{\xi|\xi|^{-2}\widetilde{\Psi}(2^{-k} \xi)} \cdot \pr{\xi \Psi(2^{-k} \xi)\wh{f}(\xi)}e^{i\xi\cdot x}\, d\xi\\
	&= 2^{-k} \left[{2^{kn}\wh{\psi}}(2^k\cdot) *  \na \De_k f\right] (x),
\end{align*}
where {$\widetilde{\Psi}$ is a radial Schwartz function supported in a larger annulus than $\Psi$ such that $\Psi\widetilde{\Psi}=\Psi,$} and $\psi (\cdot) :=( \cdot) |\cdot|^{-2} \widetilde{\Psi}(\cdot)$.  Since $\widetilde{\Psi}$ is supported in an annulus, we know $\psi \in \cs(\rr^n)$.  Applying the weighted Young inequality given in Lemma~\ref{Young}, we obtain
\[
\n{\De_k f}{L^{p}_{\lan{x}^{a}}} \lesssim 2^{-k}\n{\na \De_k f}{L^{p}_{\lan{x}^{a}}}.
\]
This establishes the converse inequality.

Inequality (ii) follows from the proof of Lemma~\ref{Bern}(i), when we apply statement (i) of the current lemma.

Also, (iii) easily follows when we apply Lemma~\ref{Bern}{(iv)} to $\na f$ instead of $f$ {with $s$ replaced by $s-1$}.  A direct application of this statement leads to the RHS of the resulting inequality to contain $\na J^{s-1} f$ rather than $J^sf$.  However, within the proof given in Lemma~\ref{Bern}{(iv)}, we can apply {the $L^p_{\lan{x}^a}$ boundedness of $\na J^{-1}S_0$ and $\na J^{-1}\De_j$ (with constants independent of $j\in \bn$), which is an easy consequence of the weighted Young inequality.}  The details are omitted.
\end{proof}

We now proceed to prove Theorem~\ref{main-comm}, which contains two different versions of the Kato-Ponce commutator estimates: \eqref{eq:new1.4} and \eqref{eq:new1.5}.  We can decompose $[J^s, f] g$ as in Subsection~\ref{sec:inhom2}:
\begin{equation}\label{eq:decomp}
\sum_{k\geq 0} [J^s, S_{k-3} f] \De_k g + \sum_{k\geq 0} [J^s, \De_k f] S_{k-3} g + \sum_{k\geq 0} [J^s , \De_k f]  \wt{\De_k}g + [J^s, S_0f]  S_0g=: II_1 + II_2 + II_3 + II_4.
\end{equation}
Note that the RHS of both inequalities are the same, while the LHS of \eqref{eq:new1.5} contains an extra term, $s\na f\cdot \na J^{s-2} g$.  This term is needed to achieve some cancellation when estimating $II_1$ only when $s>1$.

The proof of Theorem~\ref{main-comm} is organized as follows.
Estimates for $II_3$ and $II_4$ are the same for both \eqref{eq:new1.4} and \eqref{eq:new1.5}, so these will be presented first in Subsection~\ref{sec:com2}.  When $0<s<1$, estimates for $II_1$ and $II_2$ are similar to the ones given in Subsection~\ref{sec:inhom2}.  However, the same method does not work when $s>1$, as evidenced by a different form of this inequality in \eqref{eq:new1.5}.  Hence, we will divide the proof for $II_1$ and $II_2$ based on the range of $s$: Subsection~\ref{sec:com2} will contain the proof of estimates for $II_1$ and $II_2$ when $0<s<1$, and Subsection~\ref{sec:com3} will contain the corresponding estimates with the extra term on the LHS of \eqref{eq:new1.5}  when $s>1$.  In other words, Subsection~\ref{sec:com2} contains the full proof of \eqref{eq:new1.4}, and Subsection~\ref{sec:com3} contains the modifications needed to obtain \eqref{eq:new1.5}.  Finally, Subsection~\ref{sec:com4} is devoted to the proof of the negative direction (that is, sharpness of the lower threshold for $s$) of Theorem~\ref{main-comm}.

\subsection{Proof of \eqref{eq:new1.4}}\label{sec:com2}
We will assume that $0<s<1$ in this subsection in addition to other conditions on parameters stated in Theorem~\ref{main-comm}.
As in Section~\ref{sec:inhom2}, it suffices to establish
\[
\n{II_j}{L^p_{\langle x \rangle^a}} \lesssim   \n{J^s f}{L^{p_1}_{\langle x \rangle^{a_1}}} \n{g}{L^{p_2}_{\langle x \rangle^{a_2}}} + \n{\na f}{L^{p_1}_{\langle x \rangle^{a_1}}}\n{J^{s-1} g}{L^{p_2}_{\langle x \rangle^{a_2}}}
\]
for $j=1,2,3,4$.  \\

\noindent
\textbf{Estimate for $II_4$}:\\ This estimate is similar to that for $I_4$ in Subsection~\ref{sec:inhom2}.   In fact, Lemma~\ref{le:main} implies that the operator $J^{\al}S_0$ is bounded on $L^{p}_{\lan{x}^a}$ for $\al \in \rr$ and $p>0$, which leads to
\[
\n{[J^s, S_0f]  S_0g}{L^p_{\lan{x}^a}} \lesssim \n{J^s(S_0f  S_0g)}{L^p_{\lan{x}^a}}+\n{ S_0f  J^sS_0g}{L^p_{\lan{x}^a}}\lesssim \n{J^s f}{L^{p_1}_{\lan{x}^{a_1}}} \n{g}{L^{p_2}_{\lan{x}^{a_2}}}
\]
by the computations given for $I_4$ in Subsection~\ref{sec:inhom2}.  We omit the details.\\

\noindent
\textbf{Estimate for $II_3$}:\\
This estimate also follows from the same scheme as used for $I_3$ in Subsection~\ref{sec:inhom2} with minor modifications as given below.  We can apply the quasi-triangle inequality to write
\begin{align*}
\n{ [J^s , \De_k f]  \wt{\De_k}g}{L^p_{\lan{x}^a}}\lesssim
 \n{J^s  (\De_k f  \wt{\De_k}g)}{L^p_{\lan{x}^a}}+\n{ \De_k f\, J^s    \wt{\De_k}g}{L^p_{\lan{x}^a}} \lesssim 2^{ks} \n{\De_k f}{L^{p_1}_{\lan{x}^{a_2}}}  \n{\wt{\De_k}g}{L^{p_2}_{\lan{x}^{a_2}}},
\end{align*}
where we applied Theorem~\ref{main1}, H\"older's inequality and Lemmas \ref{Bern}(ii) in the last inequality above. Then, applying Lemmas~\ref{lp}, \ref{Bern}(ii) and \ref{lem:4.2}(i), we can deduce
\begin{equation}\label{eq:ii3}
\n{ [J^s , \De_k f]  \wt{\De_k}g}{L^p_{\lan{x}^a}}\lesssim \min \left\{2^{k(s-1)} \n{\nabla f}{L^{p_1}_{\langle x\rangle^{a_1}}} \n{ g}{L^{p_2}_{\langle x\rangle^{a_2}}},  2^{-k(s-1)} \n{J^s f}{L^{p_1}_{\langle x\rangle^{a_1}}} \n{ J^{s-1}g}{L^{p_2}_{\langle x\rangle^{a_2}}}\right\}.
\end{equation}
As long as $s\neq 1$, we can apply Lemma~\ref{le:2.6A} with $a = b=|s-1|$ and follow the same computations as for $I_3$ in Subsection~\ref{sec:inhom2} to obtain the desired estimate for $II_3$,

As noted previously, the estimates for $II_3$ and $II_4$ are valid  for both $0<s<1$ and $s>1$,  as long as $s > \max\left\{ 0,n\pr{\f{1}{p}-1}\right\}$ or $s\in 2\nn$. Hence, these estimates will not be repeated in Subsection~\ref{sec:com3} when we prove the inequality for $s>1$.\\

\noindent\textbf{Estimate for $II_1$:}\\
Applying Proposition \ref{prop:key2}, we can write
\begin{equation}\label{eq:II1a}
\n{[J^s, S_{k-3}f] \De_k g}{L^p_{\langle x \rangle^a}} \lesssim   2^{k(s-1)}  \n{ \nabla S_{k-3} f}{L^{p_1}_{\langle x \rangle^{a_1}}} \n{\De_k g}{L^{p_2}_{\langle x \rangle^{a_2}}}.
\end{equation}
Since we are assuming $0<s<1$, we can apply and Lemmas~\ref{lp}, \ref{Bern}(ii) and \ref{lem:inter} with $\theta =1$ to the RHS above to write
\[
\n{[J^s, S_{k-3}f] \De_k g}{L^p_{\langle x \rangle^a}} \lesssim   2^{k(1-s)}  \n{ J^s f}{L^{p_1}_{\langle x \rangle^{a_1}}} \n{J^{s-1} g}{L^{p_2}_{\langle x \rangle^{a_2}}}.
\]
On the other hand, applying Lemma~\ref{lem:inter} with $\theta = \f{1}{2}$ to the RHS of \eqref{eq:II1a} yields
\begin{align*}
\n{[J^s, S_{k-3}f] \De_k g}{L^p_{\langle x \rangle^a}} & \lesssim  2^{-\frac{k(1-s)}{2}} \n{ \nabla  f}{L^{p_1}_{\langle x \rangle^{a_1}}}^{\frac{1}{2}} \n{ J^s f}{L^{p_1}_{\langle x \rangle^{a_1}}}^{\frac{1}{2}}
\n{g}{L^{p_2}_{\langle x \rangle^{a_2}}}.
\end{align*}
Hence,
\[
\n{[J^s, S_{k-3}f] \De_k g}{L^p_{\langle x \rangle^a}}\lesssim \min \left\{2^{k(1-s)}  \n{ J^s f}{L^{p_1}_{\langle x \rangle^{a_1}}} \n{J^{s-1} g}{L^{p_2}_{\langle x \rangle^{a_2}}},  2^{-\frac{k(1-s)}{2}} \n{ \nabla  f}{L^{p_1}_{\langle x \rangle^{a_1}}}^{\frac{1}{2}} \n{ J^s f}{L^{p_1}_{\langle x \rangle^{a_1}}}^{\frac{1}{2}}
\n{g}{L^{p_2}_{\langle x \rangle^{a_2}}}\right\}.
\]
Applying Lemma~\ref{le:2.6A} with
\[
a = 1-s,\quad b  = \f{1-s}{2}, \quad A = \n{ J^s f}{L^{p_1}_{\langle x \rangle^{a_1}}} \n{J^{s-1} g}{L^{p_2}_{\langle x \rangle^{a_2}}}, \quad B =  \n{ \nabla  f}{L^{p_1}_{\langle x \rangle^{a_1}}}^{\frac{1}{2}} \n{ J^s f}{L^{p_1}_{\langle x \rangle^{a_1}}}^{\frac{1}{2}}
\n{g}{L^{p_2}_{\langle x \rangle^{a_2}}},
\]
we obtain
\begin{align*}
\n{II_1^A }{L^p_{\langle x \rangle^a}} &\lesssim \pr{\n{ J^s f}{L^{p_1}_{\langle x \rangle^{a_1}}} \n{J^{s-1} g}{L^{p_2}_{\langle x \rangle^{a_2}}}}^{\f{1}{3}} \pr{\n{ \nabla  f}{L^{p_1}_{\langle x \rangle^{a_1}}}^{\frac{1}{2}} \n{ J^s f}{L^{p_1}_{\langle x \rangle^{a_1}}}^{\frac{1}{2}}
\n{g}{L^{p_2}_{\langle x \rangle^{a_2}}} }^{\f{2}{3}}\\
 &= \pr{\n{ J^s f}{L^{p_1}_{\langle x \rangle^{a_1}}}\n{g}{L^{p_2}_{\langle x \rangle^{a_2}}}}^{\f{2}{3}} \pr{ \n{ \nabla  f}{L^{p_1}_{\langle x \rangle^{a_1}}}\n{J^{s-1} g}{L^{p_2}_{\langle x \rangle^{a_2}}}}^{\frac{1}{3}}
\\
&\leq \n{ J^s f}{L^{p_1}_{\langle x \rangle^{a_1}}}\n{g}{L^{p_2}_{\langle x \rangle^{a_2}}} +   \n{ \nabla  f}{L^{p_1}_{\langle x \rangle^{a_1}}}\n{J^{s-1} g}{L^{p_2}_{\langle x \rangle^{a_2}}}.
\end{align*}
This gives us the desired estimate for $II_1$.\\

\noindent\textbf{Estimate for $II_2$:}\\
We begin by writing
\begin{align*}
\sum_{k\geq 0} [J^s , \De_k f]  S_{k-3} g &= \sum_{k\geq 0}J^s  (\De_k f  S_{k-3} g)-  \sum_{k\geq 0} \De_k f\, J^s    S_{k-3}g =: II_{2.1} - II_{2.2}.
\end{align*}

For $II_{2.1}$, we apply Theorem~\ref{main1} as well as Lemma~\ref{Bern}(ii) and (iii) to write
\begin{equation*}
\n{J^s (\De_k f S_{k-3} g)}{L^p_{\langle x\rangle^a}}  \lesssim 2^{ks} \n{\De_k f}{L^{p_1}_{\langle x\rangle^{a_1}}} \n{S_{k-3} g}{L^{p_2}_{\langle x\rangle^{a_2}}}.
\end{equation*}
For any $s>0$, Lemmas~\ref{lp}, \ref{Bern}(ii) and \ref{lem:4.2}(i) give us:
\begin{equation}\label{eq:equiv}
\n{\De_k f}{L^{p_1}_{\langle x \rangle^{a_1}}} \lesssim \min \left\{ 2^{-ks} \n{J^s f}{L^{p_1}_{\langle x \rangle^{a_1}}},  2^{-k} \n{\na f}{L^{p_1}_{\langle x \rangle^{a_1}}}   \right\}.
\end{equation}
For $g$, under the assumption that $s\in (0,1)$, we can apply the first inequality in Lemma~\ref{lem:inter} with $\theta = 1$ and $\f{1}{2}$, respectively, to write
\[
\n{ S_{k-3}g}{L^{p}_{\langle x\rangle^a}} \lesssim \min\left\{2^{k(1-s)}\n{J^{s-1} g}{L^{p}_{\langle x\rangle^a}}, 2^{\f{k}{2}(1-s)}  \n{g}{L^{p_2}_{\langle x\rangle^{a_2}}}^{\f{1}{2}} \n{J^{s-1} g}{L^{p_2}_{\langle x\rangle^{a_2}}}^{\f{1}{2}}\right\}.
\]
Hence,
\begin{equation*}
  \n{J^s (\De_k f S_{k-3} g)}{L^p_{\langle x\rangle^a}}  \lesssim \min \left\{2^{k(1-s)} \n{J^s f}{L^{p_1}_{\langle x\rangle^{a_1}}} \n{J^{s-1} g}{L^{p_2}_{\langle x\rangle^{a_2}}}, 2^{-\f{k}{2}(1-s)} \n{\nabla f}{L^{p_1}_{\langle x\rangle^{a_1}}} \n{g}{L^{p_2}_{\langle x\rangle^{a_2}}}^{\f{1}{2}} \n{J^{s-1} g}{L^{p_2}_{\langle x\rangle^{a_2}}}^{\f{1}{2}}\right\}.
\end{equation*}
Applying Lemma \ref{le:2.6A} with
\[
a = 1-s,\quad b  = \f{1-s}{2}, \quad A = \n{ J^s f}{L^{p_1}_{\langle x \rangle^{a_1}}} \n{J^{s-1} g}{L^{p_2}_{\langle x \rangle^{a_2}}}, \quad B =  \n{ \nabla  f}{L^{p_1}_{\langle x \rangle^{a_1}}} \n{ J^{s-1} g}{L^{p_1}_{\langle x \rangle^{a_1}}}^{\frac{1}{2}}
\n{g}{L^{p_2}_{\langle x \rangle^{a_2}}}^{\frac{1}{2}},
\]
we obtain
\begin{align*}
\n{II_{2.1} }{L^p_{\langle x \rangle^a}} &\lesssim \pr{\n{ J^s f}{L^{p_1}_{\langle x \rangle^{a_1}}} \n{J^{s-1} g}{L^{p_2}_{\langle x \rangle^{a_2}}}}^{\f{1}{3}} \pr{\n{ \nabla  f}{L^{p_1}_{\langle x \rangle^{a_1}}} \n{ J^{s-1} g}{L^{p_1}_{\langle x \rangle^{a_1}}}^{\frac{1}{2}}
\n{g}{L^{p_2}_{\langle x \rangle^{a_2}}}^{\frac{1}{2}} }^{\f{2}{3}}\\
 &= \pr{\n{ J^{s-1} g}{L^{p_1}_{\langle x \rangle^{a_1}}}\n{ \nabla  f}{L^{p_1}_{\langle x \rangle^{a_1}}}}^{\f{2}{3}} \pr{ \n{J^{s} f}{L^{p_2}_{\langle x \rangle^{a_2}}}\n{g}{L^{p_2}_{\langle x \rangle^{a_2}}}}^{\frac{1}{3}}
\\
&\lesssim \n{ J^s f}{L^{p_1}_{\langle x \rangle^{a_1}}}\n{g}{L^{p_2}_{\langle x \rangle^{a_2}}} +   \n{ \nabla  f}{L^{p_1}_{\langle x \rangle^{a_1}}}\n{J^{s-1} g}{L^{p_2}_{\langle x \rangle^{a_2}}}.
\end{align*}

While the above argument for $II_{2.1}$ works only for $s\in (0,1)$, the estimate for $II_{2.2}$ given below will work for both $s\in (0,1)$ and $s>1$.  Applying H\"older's inequality, we get
\begin{align*}
\n{\De_k f\, J^s    S_{k-3}g}{L^p_{\langle x \rangle^a}} &\lesssim  \n{\De_k f}{L^{p_1}_{\langle x \rangle^{a_1}}}\n{J^{s}S_{k-3} g}{L^{p_2}_{\langle x \rangle^{a_2}}}.
\end{align*}
By Lemma~\ref{Bern}(iii) we can write
\begin{equation*}
\n{J^{s}S_{k-3} g}{L^{p_2}_{\langle x \rangle^{a_2}}} \lesssim \min \left\{  2^{k}  \n{J^{s-1} g}{L^{p_2}_{\langle x \rangle^{a_2}}},2^{ks}  \n{ g}{L^{p_2}_{\langle x \rangle^{a_2}}} \right\},
\end{equation*}
which, together with \eqref{eq:equiv}, yields
\begin{equation*}
\n{\De_k f\, J^s    S_{k-3}g}{L^p_{\langle x \rangle^a}}  \lesssim \min\left\{  2^{k(1-s)}  \n{J^{s-1} g}{L^{p_2}_{\langle x \rangle^{a_2}}} \n{J^s f}{L^{p_1}_{\langle x \rangle^{a_1}}},  2^{-k(1-s)}  \n{ g}{L^{p_2}_{\langle x \rangle^{a_2}}} \n{ \na f}{L^{p_1}_{\langle x \rangle^{a_1}}}\right\}.
\end{equation*}
Then, we can close this argument for either $0<s<1$ or $s>1$ by applying Lemma \ref{le:2.6A}.  This completes the proof of \eqref{eq:new1.4}.

\subsection{Proof of \eqref{eq:new1.5}}\label{sec:com3}

In this subsection, we will prove \eqref{eq:new1.5} under the assumption that $s>1$.  The LHS of this inequality contains an extra term, which we will consider in the estimate for $II_1$.  We denote
\[
\wt{II_1} := \sum_{k\geq 0} [J^s, S_{k-3} f] \De_k g  - s\na f\cdot \na J^{s-2} \De_{>0} g,
\]
so that the decomposition for the LHS of \eqref{eq:new1.5} will replace \eqref{eq:decomp} with:
\[
\wt{II_1} + II_2 + II_3 + II_4  - s\na f\cdot \na J^{s-2} S_0 g,
\]
where $II_2,II_3,$ and $II_4$ are the same as in Subsection \ref{sec:com2}.
Since the Fourier symbol corresponding to $\na J^{-1} S_0$ is Schwartz, we can easily estimate the last term above using the H\"older and weighted Young inequalities.  We also note that the estimates for $II_3$ and $II_4$ from Subsection~\ref{sec:com2} still hold valid when $s>1$, so we will omit these computations.  We treat now the remaining terms $\wt{II_1}$ and $II_2$.
\\

\noindent
\textbf{Estimate for $II_2$:}\\
 The estimate for $II_2$ in this case is not much different from the previous case $0<s<1.$
We write
\begin{align*}
[J^s , \De_k f]  S_{k-3} g &= J^s  (\De_k f  S_{k-3} g)- \De_k f\, J^s    S_{k-3}g\\
 &= [J^s, S_{k-3}g]  \De_k f  + J^s \De_k f  S_{k-3} g - \De_k f\, J^s    S_{k-3}g,
\end{align*}
which leads to
\[
II_2 = \sum_{k\geq 0} [J^s, S_{k-3} g] \De_k f + g J^s \De_{>0} f - \sum_{k\geq 0}J^s \De_{k} f \De_{>k-3} g - \sum_{k\geq 0}\De_k f\, J^s    S_{k-3}g =: II_{2.3} + g J^s \De_{>0} f - II_{2.4} - II_{2.5}.
\]
Indexing above begins with $2.3$ rather than $2.1$ to prevent confusion with the notations introduced in Subsection~\ref{sec:com2}.  Here, $g J^s \De_{>0} f$ can be estimated by the H\"{o}lder and weighted Young inequalities, while the estimate for $II_{2.4}$ is analogous to that of $II_{3}$ where we can apply statement (i) from Lemma~\ref{Bern} rather than (ii).  Also, $II_{2.5}$ above is identical to $II_{2.2}$ from Subsection~\ref{sec:com2}, whose estimate is valid for $s>1$ as mentioned previously.  Hence, the details for these terms are omitted.

It remains to treat $II_{2.3}$.  Applying Proposition \ref{prop:key2}, we have
\begin{equation*}
\n{[J^s, S_{k-3}g] \De_k f}{L^p_{\langle x \rangle^a}} \lesssim   2^{k(s-1)}  \n{\De_k f}{L^{p_1}_{\langle x \rangle^{a_1}}}  \n{\na S_{k-3} g}{L^{p_2}_{\langle x \rangle^{a_2}}}.
\end{equation*}
We know that
\[
\n{\na S_{k-3} g}{L^{p_2}_{\langle x \rangle^{a_2}}} \lesssim 2^{k} \n{g}{L^{p_2}_{\langle x \rangle^{a_2}}}.
\]
Also, applying Lemma~\ref{Bern}(iv) with $s$ replaced by $s-1$, we see that for $0<\ve<\min\{1,s-1\}$
\[
\n{\na S_{k-3} g}{L^{p_2}_{\langle x \rangle^{a_2}}} \lesssim 2^{k(1-\ve)} \n{g}{L^{p_2}_{\langle x \rangle^{a_2}}}^{\f{s-1-\ve}{s-1}} \n{{J^{s-1} g}}{L^{p_2}_{\langle x \rangle^{a_2}}}^{\f{\ve}{s-1}}.
\]
These estimates, together with \eqref{eq:equiv}, yield
\[
\n{[J^s, S_{k-3}g] \De_k f}{L^p_{\langle x \rangle^a}} \lesssim \min \left\{ 2^{k(s-1)}\n{\na f}{L^{p_1}_{\langle x \rangle^{a_1}}} \n{g}{L^{p_2}_{\langle x \rangle^{a_2}}},  2^{-k\ve}\n{J^s f}{L^{p_1}_{\langle x \rangle^{a_1}}}\n{g}{L^{p_2}_{\langle x \rangle^{a_2}}}^{\f{s-1-\ve}{s-1}} \n{J^{s-1}g}{L^{p_2}_{\langle x \rangle^{a_2}}}^{\f{\ve}{s-1}}\right\}.
\]
Applying Lemma \ref{le:2.6A} with $a= s-1$ and $b=\ve$ as well as
\[
 A =\n{\na f}{L^{p_1}_{\langle x \rangle^{a_1}}} \n{g}{L^{p_2}_{\langle x \rangle^{a_2}}}, \quad B = \n{J^s f}{L^{p_1}_{\langle x \rangle^{a_1}}}\n{g}{L^{p_2}_{\langle x \rangle^{a_2}}}^{\f{s-1-\ve}{s-1}} \n{J^{s-1}g}{L^{p_2}_{\langle x \rangle^{a_2}}}^{\f{\ve}{s-1}}
\]
leads to
\begin{align*}
\|II_{2.3}\|_{L^p_{\langle x \rangle^a}} &\lesssim \pr{\n{\na f}{L^{p_1}_{\langle x \rangle^{a_1}}} \n{g}{L^{p_2}_{\langle x \rangle^{a_2}}} }^{\frac{\ve}{s-1 +\ve}}  \pr{ \n{J^s  f}{L^{p_1}_{\langle x \rangle^{a_1}}}  \n{g}{L^{p_2}_{\langle x \rangle^{a_2}}}^{\frac{s-1-\ve}{s-1}}\n{J^{s-1}g}{L^{p_2}_{\langle x \rangle^{a_2}}}^{\frac{\ve}{s-1}}}^{\f{s-1}{s-1+\ve}}\\
&= \pr{\|J^s f\|_{L^{p_1}_{ \langle x\rangle^{a_1}}} \|g\|_{L^{p_2}_{\langle x \rangle^{a_2}}}}^{\frac{s-1}{s-1+\ve}} \pr{\|\na f\|_{L^{p_1}_{\langle x \rangle^{a_1}}}
\|J^{s-1}g\|_{L^{p_2}_{\langle x \rangle^{a_2}}}}^{\frac{\ve}{s-1+\ve}} \\
&  \lesssim \|J^s f\|_{L^{p_1}_{\langle x \rangle^{a_1}}} \|g\|_{L^{p_2}_{\langle x \rangle^{a_2}}}+ \|\na f\|_{L^{p_1}_{\langle x \rangle^{a_1}}} \|J^{s-1} g\|_{L^{p_2}_{\langle x \rangle^{a_2}}}.
\end{align*}
Altogether, we obtain the desired estimate for $II_2$ when $s>1$.\\

\noindent
\textbf{Estimate for $\wt{II_1}$:}\\
This estimate is the most involved and requires a more complicated commutator structure, involving  $s\na f\cdot \na J^{s-2} \De_{>0} g$, to ensure summability in $k\in \nn$.   We write $\wt{II_1}$ as
\[
\sum_{k\geq 0} \pr{ [J^s, S_{k-3} f] \De_k g- s\na S_{k-3} f \cdot \na J^{s-2} \De_k g}  + s\sum_{k\geq 0} \na\De_{>k-3} f \cdot \, \na J^{s-2} \De_k g =: \wt{II_{1.1}} + \wt{II_{1.2}}.
\]
$\wt{II_{1.2}}$ can be handled similar to $II_3$ in Subsection~\ref{sec:com2}. Indeed, for $s>1,$ Lemmas~\ref{lp}, \ref{Bern}(i), (ii), \ref{lem:4.2}(i) and (ii) lead to
\[
\n{\na\De_{>k-3} f \cdot \, \na J^{s-2} \De_k g}{L^{p}_{\langle x\rangle^a}}\lesssim \min \left\{2^{k(s-1)} \n{\na f}{L^{p_1}_{\langle x\rangle^{a_1}}}\n{ g}{L^{p_2}_{\langle x\rangle^{a_2}}},  2^{-k(s-1)} \n{J^s f}{L^{p_1}_{\langle x\rangle^{a_1}}} \n{J^{s-1} g}{L^{p_2}_{\langle x\rangle^{a_2}}}\right\},
\]
where the RHS is the same as that in \eqref{eq:ii3}.  Hence, the desired inequality follows using the same computations.

It remains to estimate  $\wt{II_{1.1}}$, which requires the second-order extension of Proposition~\ref{prop:key2}.  We introduce the following proposition.

\begin{prop}\label{prop:key3}
Let $a_1,a_2\ge 0$, $p\in [\f{1}{2}, 1]$ and $p_1,p_2\in [1,\infty]$ satisfying $ \f{1}{p_1} + \f{1}{p_2}=\f{1}{p}$ and $\f{a_1}{p_1} + \f{a_2}{p_2} = \f{a}{p}$.  Then for any $s > 0$ and $k\geq 0$,
\begin{equation}\label{eq:key3}
\n{[J^s, S_{k-3}f] \De_k g- s \na S_{k-3} f \cdot \na J^{s-2} \De_k g}{L^p_{\langle x \rangle^a}} \lesssim   2^{k(s-2)}  \n{|\na|^2 S_{k-3} f}{L^{p_1}_{\langle x \rangle^{a_1}}} \n{\De_k g}{L^{p_2}_{\langle x \rangle^{a_2}}},
\end{equation}
where $|\na|^2=-\Delta$.
\end{prop}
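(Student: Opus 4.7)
The plan is to follow the blueprint of Proposition~\ref{prop:key2}, replacing the first-order Taylor expansion of $\langle\xi+\eta\rangle^s$ by a second-order one and pulling two derivatives off of $f$ instead of one. Writing the operator on the Fourier side, its bilinear symbol is
\[
m(\xi,\eta)=\langle\xi+\eta\rangle^s-\langle\eta\rangle^s-s\,\xi\cdot\eta\,\langle\eta\rangle^{s-2}.
\]
Setting $F(t):=\langle t\xi+\eta\rangle^s$, the integral form of Taylor's theorem $F(1)-F(0)-F'(0)=\int_0^1(1-t)F''(t)\,dt$ gives
\[
m(\xi,\eta)=\int_0^1(1-t)\Bigl[s|\xi|^2\langle t\xi+\eta\rangle^{s-2}+s(s-2)\bigl(\xi\cdot(t\xi+\eta)\bigr)^2\langle t\xi+\eta\rangle^{s-4}\Bigr]dt.
\]

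On the Fourier supports of $\widehat{S_{k-3}f}$ and $\widehat{\De_k g}$ we have $|t\xi+\eta|\sim 2^k$ uniformly in $t\in[0,1]$, so a smooth annular cutoff $\psi(2^{-k}(t\xi+\eta))$ can be harmlessly inserted. Writing $|\xi|^2=\sum_a\xi_a^2$ and $(\xi\cdot\zeta)^2=\sum_{a,b}\xi_a\xi_b\zeta_a\zeta_b$ with $\zeta:=t\xi+\eta$, and extracting the scale $2^{k(s-2)}$ from $\langle\zeta\rangle^{s-2}$ and $|\zeta|^2\langle\zeta\rangle^{s-4}$, the integrand takes the form
\[
2^{k(s-2)}\sum_{a,b}\xi_a\xi_b\,\si_k^{a,b}\bigl(2^{-k}(t\xi+\eta)\bigr),
\]
where
\[
\si_k^{a,b}(\wt\zeta):=\Bigl[s\,\de_{ab}(2^{-2k}+|\wt\zeta|^2)^{(s-2)/2}+s(s-2)\wt\zeta_a\wt\zeta_b(2^{-2k}+|\wt\zeta|^2)^{(s-4)/2}\Bigr]\psi(\wt\zeta)
\]
is $C_c^\infty$ on the annular support of $\psi$ with derivatives of every order uniformly bounded in $k$; in particular $\widehat{\si_k^{a,b}}$ decays faster than any polynomial, verifying the hypothesis of Lemma~\ref{le:commutator}. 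Pulling each factor $\xi_a\xi_b$ onto $\widehat{S_{k-3}f}$ via $\xi_a\xi_b\widehat{S_{k-3}f}=-\widehat{\partial_a\partial_b S_{k-3}f}$ and invoking Lemma~\ref{le:commutator} term-by-term then yields
\[
\|\mathrm{LHS}\|_{L^p_{\lan{x}^a}}\lesssim 2^{k(s-2)}\sum_{a,b}\|\partial_a\partial_b S_{k-3}f\|_{L^{p_1}_{\lan{x}^{a_1}}}\|\De_k g\|_{L^{p_2}_{\lan{x}^{a_2}}}.
\]

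The main obstacle is the last reduction, from $\sum_{a,b}\|\partial_a\partial_b S_{k-3}f\|$ to $\||\na|^2 S_{k-3}f\|$, since the associated second-order Riesz transforms $R_aR_b$ need not be bounded on $L^{p_1}_{\lan{x}^{a_1}}$ when the polynomial weight $\lan{x}^{a_1}$ falls outside the Muckenhoupt class $A_{p_1}$. I would handle this by dyadic decomposition $S_{k-3}f=S_0 f+\sum_{j=1}^{k-3}\De_j f$: on each piece $\De_j f$ with $j\ge 1$, the multiplier $-\xi_a\xi_b/|\xi|^2$ restricts to the smooth compactly supported bump $-\wt\zeta_a\wt\zeta_b/|\wt\zeta|^2$ times an annular cutoff on the support of $\Psi(2^{-j}\cdot)$, and by the scaling argument of Lemmas~\ref{Young}--\ref{lp} its convolution kernel has weighted $L^1$ norm uniformly bounded in $j$, yielding $\|\partial_a\partial_b\De_j f\|_{L^{p_1}_{\lan{x}^{a_1}}}\lesssim\||\na|^2\De_j f\|_{L^{p_1}_{\lan{x}^{a_1}}}$. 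The low-frequency piece $\partial_a\partial_b S_0 f$ is controlled by direct Schwartz convolution against $f$, using that $|\na|^2S_0$ vanishes to second order at the origin in frequency. Carefully reassembling these pieces while respecting the frequency localization of $S_{k-3}f$ should deliver the target bound $\||\na|^2 S_{k-3}f\|$ on the right-hand side.
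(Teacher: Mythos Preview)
Your approach is the same as the paper's: a second-order Taylor expansion of $\langle\cdot\rangle^s$ combined with Lemma~\ref{le:commutator}. Your formula for $F''(t)$ is correct; the paper's displayed integrand $s|\xi|^2\bigl(\langle t'\xi+\eta\rangle^{s-2}+|t'\xi+\eta|^2\langle t'\xi+\eta\rangle^{s-4}\bigr)$ is a miscomputation of the Hessian term---the correct second term is $s(s-2)\bigl(\xi\cdot(t'\xi+\eta)\bigr)^2\langle t'\xi+\eta\rangle^{s-4}$, exactly as you wrote. This is why the paper appears to pull out the scalar factor $|\xi|^2$ cleanly and land directly on $|\na|^2 S_{k-3}f$, whereas you (correctly) obtain the mixed second partials $\partial_a\partial_b S_{k-3}f$.

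Your final reduction from $\sum_{a,b}\|\partial_a\partial_b S_{k-3}f\|$ to $\||\na|^2 S_{k-3}f\|$ does not work as stated. Two problems: (i) the low-frequency multiplier $-\xi_a\xi_b\Phi(\xi)/|\xi|^2$ is bounded but not smooth at the origin, so $\partial_a\partial_b S_0 f$ cannot be written as a weighted-$L^1$ kernel against $|\na|^2 S_{k-3}f$; (ii) even granting $\|\partial_a\partial_b\De_j f\|\lesssim\||\na|^2\De_j f\|$ uniformly in $j$, the triangle inequality gives $\sum_{j\le k-3}\||\na|^2\De_j f\|$, which is not dominated by $\||\na|^2 S_{k-3}f\|$ without a square-function estimate---and that is precisely what fails for general polynomial weights.

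The clean fix is not to perform this reduction at all, but to state (and prove) the estimate with $\sum_{a,b}\|\partial_a\partial_b S_{k-3}f\|_{L^{p_1}_{\langle x\rangle^{a_1}}}$ on the right-hand side, which your argument already delivers. In the paper's only use of the proposition (the bound on $\widetilde{II_{1.1}}$), the quantity $\||\na|^2 S_{k-3}f\|$ is immediately estimated by $2^k\|\na f\|$ via $|\na|^2 S_{k-3}f = 2^{(k-3)n}(\na\widehat\Phi)(2^{k-3}\cdot)*\na f$ and by Lemma~\ref{lem:4.2}(iii); both of these bounds hold verbatim for $\|\partial_a\partial_b S_{k-3}f\|$ with the same Schwartz-kernel proof (replace $\na\widehat\Phi$ by $\partial_a\widehat\Phi$). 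So the weaker form of the proposition suffices for everything downstream.
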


\begin{proof}
We can write $[J^s, S_{k-3}f] \De_k g- s \na S_{k-3} f \cdot \na  J^{s-2} \De_k g$ as follows:
\begin{align*}
&\int_{\rr^{n+n}} \pr{\langle\xi+\eta\rangle^s - \langle\eta\rangle^s - s\xi\cdot \eta \lan{\eta}^{s-2}}  \wh{S_{k-3} f}(\xi)  \wh{\De_k g}(\eta) e^{i(\xi+\eta)\cdot x}\, d\xi \, d\eta\\
&=\int_{\rr^{n+n}} \pr{\int_0^1s \xi \cdot (t\xi+\eta) \langle t\xi + \eta\rangle^{s-2} - s\xi\cdot \eta \lan{\eta}^{s-2} \,dt}  \wh{S_{k-3} f}(\xi) \wh{\De_k g}(\eta) e^{i(\xi+\eta)\cdot x}\, d\xi \, d\eta\\
&=\int_{\rr^{n+n}} \pr{\int_0^1\int_0^t s |\xi|^2  \pr{\langle t'\xi + \eta\rangle^{s-2}+  |t'\xi+\eta|^2 \langle t'\xi + \eta\rangle^{s-4}}\,dt' \,dt}  \wh{S_{k-3} f}(\xi) \wh{\De_k g}(\eta) e^{i(\xi+\eta)\cdot x}\, d\xi \, d\eta\\
&=s\int_0^1\int_0^t \int_{\rr^{n+n}}  \pr{\langle t'\xi + \eta\rangle^{s-2}+  |t'\xi+\eta|^2 \langle t'\xi + \eta\rangle^{s-4}} \,\wh{|\na|^2 S_{k-3} f}(\xi) \wh{\De_k g}(\eta) e^{i(\xi+\eta)\cdot x}\, d\xi \, d\eta\,dt' \,dt.
\end{align*}
We can follow the computations within the proof of Proposition~\ref{prop:key2}, including the application of Lemma~\ref{le:commutator}, to obtain the desired estimate.  Doing so involves a trivial modification of Lemma~\ref{le:commutator} to allow for the double integral $dt'\, dt$.  We omit the details.
\end{proof}

Let us continue with the estimate of $\wt{II_{1.1}}$.  By Lemma \ref{Young}, we can write
\[
\n{|\na|^2 S_{k-3} f }{L^{p_1}_{\langle x \rangle^{a_1}}} = \n{2^{(k-3)n} \pr{\na \wh{\Phi}(2^{k-3}\cdot)} * (\na f) }{L^{p_1}_{\langle x \rangle^{a_1}}} \lesssim 2^{k} \n{\na f }{L^{p_1}_{\langle x \rangle^{a_1}}},
\]
where $\na g * \na f=\sum_{i=1}^n \partial_i g * \partial_i f.$
Combining this with Lemma~\ref{lem:4.2}(iii), for some $0<\ve< \min(1,s-1)$, we have
\[
\n{|\na|^2 S_{k-3} f }{L^{p_1}_{\langle x \rangle^{a_1}}} \lesssim \min \left\{2^{k} \n{\na f }{L^{p_1}_{\langle x \rangle^{a_1}}}, 2^{k(1-\ve)} \n{\na f }{L^{p_1}_{\langle x \rangle^{a_1}}}^{\f{s-1-\ve}{s-1}}\n{J^{s} f }{L^{p_1}_{\langle x \rangle^{a_1}}}^{\f{\ve}{s-1}}\right\}.
\]
Substituting this bound into \eqref{eq:key3} and applying Lemmas~\ref{lp} and \ref{Bern}(ii) to $g$, we have
\begin{align*}
\n{[J^s, S_{k-3}f] \De_k g- s \na S_{k-3} f \cdot \na J^{s-2} \De_k g}{L^p_{\langle x \rangle^a}}&  \\
&\hspace{-100pt}\lesssim  \min \left\{2^{k(s-1)}\n{\na f}{L^{p_1}_{\langle x \rangle^{a_1}}} \n{g}{L^{p_2}_{\langle x \rangle^{a_2}}},\, 2^{-k\ve} \n{\na f }{L^{p_1}_{\langle x \rangle^{a_1}}}^{\f{s-1-\ve}{s-1}}\n{J^{s} f }{L^{p_1}_{\langle x \rangle^{a_1}}}^{\f{\ve}{s-1}} \n{J^{s-1} g}{L^{p_2}_{\langle x \rangle^{a_2}}}  \right\}.
\end{align*}
Finally, we apply Lemma~\ref{le:2.6A} with $a=s-1$ and $b=\ve$, thereby obtaining
\begin{align*}
\n{\wt{I_{1.1}}}{L^p_{\langle x \rangle^a}} &\lesssim \pr{\n{\na f}{L^{p_1}_{\langle x \rangle^{a_1}}} \n{ g}{L^{p_2}_{\langle x \rangle^{a_2}}} }^{\frac{\ve}{s-1+\ve}}  \pr{ {\n{ J^{s-1} g}{L^{p_2}_{\langle x \rangle^{a_2}}}}  \n{\na f}{L^{p_1}_{\langle x \rangle^{a_1}}}^{\frac{s-1-\ve}{s-1}}\n{J^s f}{L^{p_1}_{\langle x \rangle^{a_1}}}^{\frac{\ve}{s-1}}}^{\f{s-1}{s-1+\ve}}\\
&= \pr{{\n{ J^{s-1} g}{L^{p_2}_{\langle x \rangle^{a_2}}}} \|\na f\|_{L^{p_1}_{\langle x \rangle^{a_1}}}}^{\frac{s-1}{s-1+\ve}} \pr{\|J^s f\|_{L^{p_1}_{\langle x \rangle^{a_1}}}
\|g\|_{L^{p_2}_{\langle x \rangle^{a_2}}}}^{\frac{\ve}{s-1+\ve}} \\
&  \lesssim  \|J^s f\|_{L^{p_1}_{\langle x \rangle^{a_1}}} \|g\|_{L^{p_2}_{\langle x \rangle^{a_2}}}+ \|\na f\|_{L^{p_1}_{\langle x \rangle^{a_1}}} {\n{ J^{s-1} g}{L^{p_2}_{\langle x \rangle^{a_2}}}},
\end{align*}
This completes the proof of
$\wt{II_{1.1}}$, and hence \eqref{eq:new1.5} in Theorem~\ref{main-comm}.

\subsection{Proof of the Sharpness of the Lower Threshold for $s$}\label{sec:com4}

In this subsection, we will prove the negative direction of Theorem~\ref{main-comm}.  We note that applying (quasi-)triangle and H\"older inequalities to \eqref{eq:new1.4} and \eqref{eq:new1.5} leads repectively to
\begin{align}
\n{J^s (fg) }{L^p_{ \langle x\rangle^a
}} &\lesssim \tn{RHS of \eqref{1.4}} +  \n{\na f}{L^{p_1}_{\langle  x \rangle^{a_1}}}\n{J^{s-1} g}{L^{p_2}_{\langle  x \rangle^{a_2}}},\label{eq:com1} \\
\n{J^s (fg) }{L^p_{ \langle x\rangle^a
}} &\lesssim \tn{RHS of \eqref{1.4}} +  \n{\na f}{L^{p_1}_{\langle  x \rangle^{a_1}}}\pr{ \n{J^{s-1} g}{L^{p_2}_{\langle  x \rangle^{a_2}}} + \n{ \na J^{s-2} g}{L^{p_2}_{\langle x \rangle^{a_2}}}  }.\label{eq:com2}
\end{align}

For $s< 0$, we can again take
\[
f(x)=f_k(x)=e^{i2^k e_1\cdot x} \wh{\Phi}(x),\qquad g(x)=g_k(x)=e^{-i2^k e_1\cdot x} \wh{\Phi}(x),
\]
as in Subsection~\ref{sec:inhom3}. Since $k\in \nn$, a direct computation gives $\n{\na f}{L^{p_1}_{\langle  x \rangle^{a_1}}} \lesssim 2^k$, while the same arguments from  Subsection~\ref{sec:inhom3} gives $\n{J^{s-1} g}{L^{p_2}_{\langle  x \rangle^{a_2}}} \lesssim 2^{(s-1)k}$ and $\n{\na J^{s-2} g}{L^{p_2}_{\langle  x \rangle^{a_2}}} \lesssim 2^{(s-1)k}.$  Hence, the RHS of \eqref{eq:com1} and \eqref{eq:com2} approach zero as $k\to \infty$ when $s<0$, while the LHS remains a positive constant, leading to a contradiction.

Next, we assume $0 < s \leq n\pr{\f{1}{p} - 1}$ and $s \not\in 2\nn$.  For the commutator estimates, the counterexample in Subsection~\ref{sec:inhom3} cannot be used due to the fact that the RHS of \eqref{eq:com1} and \eqref{eq:com2} may contain negative differential operators $J^{s-1}$ or $\na J^{s-2}$.  Dilated forms of  these operators, $J_{\de}^{s-1}$ or $\de \na J^{s-2}_{\de}$ may be unbounded on $L^{p_2}_{\lan{x}^{a_2}}$, which invalidates the previous counterexample.

However, we can still generate a counterexample similarly and avoid this issue.  Rather than choosing arbitrary functions $f,g \in C^\infty_c(\rr^n)$, we choose $f = g = \Psi_\de = \Psi(\cdot/\de)$, where $\Psi$ is the Schwartz function introduced in Section~\ref{sec:prelim} whose Fourier transform is supported on an annulus.  Applying either \eqref{eq:com1} and \eqref{eq:com2} to this choice of $f$ and $g$ leads respectively to
\begin{align*}
\n{J^s_\de (\Psi)^2 }{L^p_{ \langle \de x\rangle^a
}} &\lesssim \n{J^s_\de \Psi}{L^{p_1}_{\langle \de x \rangle^{a_1}}} \n{\Psi}{L^{p_2}_{\langle \de x \rangle^{a_2}}} + \n{ \Psi}{L^{p_1}_{\langle \de x \rangle^{a_1}}}\n{J^{s}_\de \Psi}{L^{p_2}_{\langle\de  x \rangle^{a_2}}} + \n{\na \Psi}{L^{p_1}_{\langle \de x \rangle^{a_1}}}\n{J^{s-1}_\de \Psi}{L^{p_2}_{\langle \de x \rangle^{a_2}}}, \\
\n{J^s_\de (\Psi)^2 }{L^p_{ \langle\de  x\rangle^a
}} &\lesssim   \n{J^s_\de \Psi}{L^{p_1}_{\langle \de x \rangle^{a_1}}} \n{\Psi}{L^{p_2}_{\langle \de x \rangle^{a_2}}} + \n{ \Psi}{L^{p_1}_{\langle \de x \rangle^{a_1}}}\n{J^{s}_\de \Psi}{L^{p_2}_{\langle\de  x \rangle^{a_2}}} + \n{\na \Psi}{L^{p_1}_{\langle \de  x \rangle^{a_1}}}\pr{ \n{J^{s-1}_\de \Psi}{L^{p_2}_{\langle \de x \rangle^{a_2}}} +\n{ \na J_\de^{s-2} \Psi}{L^{p_2}_{\langle \de x \rangle^{a_2}}}  }.
\end{align*}

Since $\Psi$ is real-valued, $(\Psi)^2$ is non-negative and non-zero Schwartz function.  
On the RHS of either inequality, we see some $J^s_\de$ (or $J^{s-1}_\de$, $J^{s-2}_\de$)  applied to  $\Psi$.  Similar to the discussion in the proof of Proposition~\ref{prop:key2}, we can show that, for any $\al \in \rr$,
\[
|J^\al_{\de} \Psi (x)| \lesssim_{M,\al} \lan{x}^{-M},
\]
where the implicit constant is independent of $\de \in (0,1]$.  This shows that the RHS of either inequality is uniformly bounded for $\de \in (0,1]$, while we have shown in  Subsection~\ref{sec:inhom3} that the LHS of either inequality becomes unbounded as $\de \to 0$ if $0 < s \leq n\pr{\f{1}{p} - 1}$ and $s \not\in 2\nn$.  This prove the sharpness of the restriction $s> \max\left\{n\pr{\f{1}{p} - 1},0\right\}$ and $s\not\in 2\nn$.

\section{Biparameter Kato-Ponce Inequality in the  Weighted Mixed Norm Setting}\label{sec:multi}
We note that all of the lemmas in Section~\ref{sec:inhom} extend effortlessly to the mixed norm setting when the weighted Lebesgue spaces $L^p_{\lan{x}^a}$, $L^{p_1}_{\lan{x}^{a_1}}$, $L^{p_2}_{\lan{x}^{a_2}}$ are replaced by the weighted mixed Lebesgue spaces $\lp$, $\lpa$, $\lpb$, respectively.  Hence, Theorem~\ref{main2} is proved in the same way as Theorem~\ref{main1}, and we omit the details.

On the other hand, the proof of Theorem~\ref{main3} requires additional work. The proof of Theorem~\ref{main3} in the unweighted setting has been presented in \cite{OW}.  In this section, we will provide necessary modifications to the arguments given in \cite{OW} to extend the theorem to the weighted setting.  We introduce the notations to be used henceforth:
\begin{itemize}
\item For spatial variables, we will use $\dot{x}\in \rr^{\dot{n}}$, $\ddot{x}\in \rr^{\ddot{n}}$, and $\wt{x} = (\dot{x},\dot{x}')\in \rr^{\wt{n}}$ where $\wt{n}= \dot{n}+ \ddot{n}$.
\item For frequency variables, we will use  $\xi \in \rr^{\dot{n}}$, $\xi'\in \rr^{\ddot{n}}$, and $\wt{\xi} = (\xi, \ddot{\xi})\in \rr^{\wt{n}}$.
\item Prime and tilde accompanying parameter values represent the dimensionality stated above.  For instance, $L^{\dot{p}}_{\dot{a}} = L^p_{\lan{x}^a}(\rr^n)$ and $L^{p'}_{a'}= L^p_{\lan{x'}^{a'}}(\rr^{n'})$.
\item Prime and tilde accompanying operators represent the dimensionality stated above.   For instance $S_k$ is an operator for $x\in \rr^n$, while ${S}'_{{k}'}$ and $\wt{S}_{\wt{k}}$ are the analogous operators for ${{x}'} \in \rr^{{{n}'}}$ and $\wt{x} \in \rr^{\wt{n}}$, respectively.
\item When an operator already has an exponent, only the exponent will accompany the prime or tilde.  For instance, $J^s$, $J^{s'}$ and $J^{\wt{s}}$ represents the corresponding operators for $\rr^n$, $\rr^{n'}$, and $\rr^{\wt{n}}$, respectively.
\item Finally, given $p, p' \in (0,\infty]$, we use the following notations:
\[
\bar{p} := \min\{1, p\}, \qquad p^* = \min \{1,p,p'\}.
\]
\end{itemize}

We will take for granted the extensions of Lemmas~\ref{Young}, \ref{lp} and \ref{Bern} in Subsection~\ref{sec:inhom1} into the weighted mixed norm setting.  For instance, the weighted Young's inequality given in Lemma~\ref{Young} extends easily to the mixed-norm setting:
\[
\|f*g\|_{L^{\ddot{r}}_{\ddot{a}}L^{\dot{r}}_{\dot{a}}}\le \|f\|_{L^{\ddot{p}}_{\ddot{a}}L^{\dot{p}}_{\dot{a}}} \|g\|_{L^{\ddot{q}}_{\ddot{a}}L^{\dot{q}}_{\dot{a}}},
\]
where the parameters satisfy appropriate conditions.  Also, the subadditivity of the mixed norm given in \cite{OW} also easily extends to the weighted setting.  More specifically, for any $\dot{p},\ddot{p}>0$ and $\dot{a},\ddot{a} \in \rr$, we have
\begin{equation}\label{eq:mixedtri}
\Big\|\sum_k f_k \Big\|_{\lp}^{p^*} \leq \sum_k \n{f_k}{\lp}^{p^*}.
\end{equation}
A less trivial extension is that of our main lemma, Lemma~\ref{le:main}, whose modification is given below.

\begin{lem}\label{mixed-main}
Let $\dot{p}, \ddot{p}\in (0,\infty]$, $a,a'\in [0,\infty)$, $\{\dot{\si}_{\dot{k}}\}_{\dot{k}\in\nn}$ and $\{\ddot{\si}_{\ddot{k}}\}_{\ddot{k}\in\nn}$ be two compactly supported families of functions on $\rr^{\dot{n}}$ and $\rr^{\ddot{n}}$, respectively (in the sense of \eqref{new3.11}).  If there exist constants $\dot{c}, c'>0$ and $\dot{\ga}, \ddot{\ga}$ satisfying $\ds \dot{\ga} >  \f{\dot{n}}{\bar{p}}$ and $\ds \ddot{\ga} > \f{\ddot{n}}{p^*}$ such that
\begin{align}\label{new5.2}
\begin{split}
 \abs{\wh{\dot{\si}_{\dot{k}}}\pr{\dot{x}} } &\lesssim_{\dot{M}} \lan{\dot{x}}^{-\dot{M}} +  \lan{\dot{x}}^{-\dot{\ga}} e^{-\dot{c} 2^{-\dot{k}}\abs{\dot{x}}}, \\
 \abs{\wh{\ddot{\si}_{\ddot{k}}}\pr{\ddot{x}} } &\lesssim_{\ddot{M}} \lan{\ddot{x}}^{-\ddot{M}} +  \lan{\ddot{x}}^{-\ddot{\ga}} e^{-c' 2^{-\ddot{k}}\abs{\ddot{x}}},
 \end{split}
\end{align}
for any $\dot{M}, \ddot{M}\gg 1$ and $\dot{k}, \ddot{k}\in \bn_+$, where the implicit constants are independent of $\dot{k}, \ddot{k}$, then
\begin{align}
\n{ \int_{\rr^{{\wt n}}} \dot{\si}_{\dot{k}} \pr{2^{-\dot{k}} {\dot{\xi}}}\ddot{\si}_{\ddot{k}} \pr{2^{-\ddot{k}} {\ddot{\xi}}} \cf \left[\ddot{S}_{\ddot{k}} \dot{S}_{\dot{k}} h\right]\pr{{\wt\xi}}  e^{i{\wt\xi} \cdot {\wt x}}\, d{\wt \xi}}{\lp} &\lesssim \n{\ddot{S}_{\ddot{k}} \dot{S}_{\dot{k}} h}{\lp},\label{eq:bip1}\\
\n{ \int_{\rr^{\dot{n}}} \dot{\si}_{\dot{k}} \pr{2^{-\dot{k}} {\dot{\xi}}} \dot{\cf}\left[\dot{S}_{\dot{k}} h\right]\pr{\dot{\xi},\ddot{x}}  e^{i\dot{\xi} \cdot \dot{x}}\, d\dot{\xi}}{\lp} &\lesssim \n{\dot{S}_{\dot{k}} h}{\lp},\label{eq:bip2}\\
\n{ \int_{\rr^{\ddot{n}}} \ddot{\si}_{\ddot{k}} \pr{2^{-\ddot{k}} {\ddot{\xi}}} \ddot{\cf}\left[\ddot{S}_{\ddot{k}} h\right]\pr{\dot{x},\ddot{\xi}}  e^{i\ddot{\xi} \cdot \ddot{x}}\, d\ddot{\xi}}{\lp} &\lesssim \n{\ddot{S}_{\ddot{k}} h}{\lp}. \label{eq:bip3}
\end{align}
\end{lem}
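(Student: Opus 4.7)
The plan is to mirror the proof of Lemma~\ref{le:main} and promote it to the biparameter mixed-norm setting. By the compact support hypothesis \eqref{new3.11}, I would first expand each symbol as a Fourier series on an appropriate cube,
\[
\dot\sigma_{\dot k}(\dot\xi) = \chi_{[-R,R]^{\dot n}}(\dot\xi)\sum_{\mm\in\zz^{\dot n}} \dot c^{\dot k}_\mm \, e^{-\tfrac{2\pi i\, \mm\cdot\dot\xi}{R}},\qquad \ddot\sigma_{\ddot k}(\ddot\xi) = \chi_{[-R,R]^{\ddot n}}(\ddot\xi)\sum_{\mm'\in\zz^{\ddot n}} \ddot c^{\ddot k}_{\mm'}\, e^{-\tfrac{2\pi i\, \mm'\cdot\ddot\xi}{R}},
\]
whose Fourier coefficients, by the decay hypothesis \eqref{new5.2} as in \eqref{eq:cmk}, satisfy
\[
|\dot c^{\dot k}_\mm|\lesssim \lan{\mm}^{-\dot M} + \lan{\mm}^{-\dot\ga} e^{-c_1 2^{-\dot k}|\mm|},\qquad |\ddot c^{\ddot k}_{\mm'}|\lesssim \lan{\mm'}^{-\ddot M} + \lan{\mm'}^{-\ddot\ga} e^{-c_2 2^{-\ddot k}|\mm'|}
\]
for any $\dot M, \ddot M\gg 1$. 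Substituting into the LHS of \eqref{eq:bip1} and tracking the phases converts the multiplier action into the double sum of translates
\[
\sum_{\mm,\mm'} \dot c^{\dot k}_\mm \ddot c^{\ddot k}_{\mm'}\, (\ddot S_{\ddot k}\dot S_{\dot k} h)\pr{\dot x - \tfrac{2\pi}{R}2^{-\dot k}\mm,\; \ddot x - \tfrac{2\pi}{R}2^{-\ddot k}\mm'};
\]
for \eqref{eq:bip2} and \eqref{eq:bip3} only the corresponding single sum appears.

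Next, I would estimate the mixed norm by treating the two translation directions separately so as to match the asymmetric hypotheses $\dot\ga>\dot n/\bar p$ and $\ddot\ga>\ddot n/p^*$. For a translation purely in $\dot x$, at each fixed $\ddot x$ the quasi-triangle inequality in $L^{\dot p}_{\dot a}$ with exponent $\bar p=\min(1,\dot p)$ combined with $\lan{\dot x + \dot y}\lesssim \lan{\dot x}\lan{\dot y}$ extracts a constant $\dot C$ independent of $\ddot x$; taking the outer $L^{\ddot p}_{\ddot a}$ norm then gives
\[
\n{\sum_\mm \dot c^{\dot k}_\mm\, g\bigl(\cdot-\tfrac{2\pi}{R}2^{-\dot k}\mm,\,\cdot\bigr)}{\lp}\lesssim \dot C\,\n{g}{\lp},\qquad \dot C := \pr{\sum_\mm |\dot c^{\dot k}_\mm|^{\bar p}\lan{2^{-\dot k}\mm}^{\dot a\bar p/\dot p}}^{1/\bar p},
\]
which directly yields \eqref{eq:bip2}. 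For a translation purely in $\ddot x$, the biparameter subadditivity \eqref{eq:mixedtri} with exponent $p^*$ together with $\n{g(\cdot,\cdot-\ddot y)}{\lp}\lesssim \lan{\ddot y}^{\ddot a/\ddot p}\n{g}{\lp}$ gives \eqref{eq:bip3} with factor
\[
\ddot C := \pr{\sum_{\mm'} |\ddot c^{\ddot k}_{\mm'}|^{p^*}\lan{2^{-\ddot k}\mm'}^{\ddot a p^*/\ddot p}}^{1/p^*}.
\]
For \eqref{eq:bip1} I would then apply the $\dot x$-argument to the inner $\mm$-sum with $\mm'$ held fixed, followed by the $\ddot x$-argument to the outer $\mm'$-sum, obtaining the product bound $\dot C\cdot\ddot C\cdot \n{\ddot S_{\ddot k}\dot S_{\dot k} h}{\lp}$.

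The main obstacle, exactly as in Lemma~\ref{le:main}, is establishing uniform-in-$(\dot k,\ddot k)$ control of $\dot C$ and $\ddot C$. The Schwartz-decay part of the Fourier coefficient bound produces $\sum_\mm \lan{\mm}^{-\dot M\bar p + \dot a\bar p/\dot p}$, which is summable upon choosing $\dot M$ large (and analogously for $\ddot M$). The delicate part is the exponentially decaying term, which contributes
\[
\sum_\mm \lan{\mm}^{-\dot\ga \bar p}\,\pr{e^{-c_1\bar p\, 2^{-\dot k}|\mm|}\lan{2^{-\dot k}\mm}^{\dot a\bar p/\dot p}}.
\]
The bracketed factor is uniformly bounded in $(\dot k,\mm)$ since exponential decay dominates polynomial growth uniformly in the scaling $2^{-\dot k}$, so the sum reduces to $\sum_\mm \lan{\mm}^{-\dot\ga \bar p}$, which converges precisely under the assumption $\dot\ga > \dot n/\bar p$; the parallel computation in the other variable uses $\ddot\ga > \ddot n/p^*$. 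This exponential--polynomial cancellation, inherited from Lemma~\ref{le:jsde}, is what makes the polynomial weights compatible with the sharp decay thresholds in the mixed-norm setting; without it, strictly larger decay rates $\dot\ga$ and $\ddot\ga$ would be required.
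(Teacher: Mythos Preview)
Your proposal is correct and follows essentially the same approach as the paper: Fourier series expansion of the compactly supported symbols, reduction to sums of translates, then a two-step estimate using $\bar{p}$-subadditivity in the inner $L^{\dot p}_{\dot a}$ norm for the $\dot{\mm}$-sum followed by $p^*$-subadditivity of the full mixed norm for the $\ddot{\mm}$-sum, with the exponential factor absorbing the weight-translation cost exactly as in Lemma~\ref{le:main}. The paper likewise dispatches \eqref{eq:bip2} and \eqref{eq:bip3} by direct reference to the single-parameter argument and only details the genuinely biparameter case \eqref{eq:bip1}.
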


\begin{proof}
Both \eqref{eq:bip2} and \eqref{eq:bip3} can be shown using the same arguments as for Lemma~\ref{le:main},  by simply replacing the regular norm with the mixed norm.  We will only highlight necessary additions to prove the true biparameter variant \eqref{eq:bip1}. We assume
$$\supp~\si_k\subset [-R,R]^n, \quad \supp~\si_{k'}'\subset [-R',R']^{n'},\quad \forall k,k'\in \bn,$$
and expanding the symbols $\si_{k}$ and $\si_{k}'$ into the Fourier series on these cubes. Then decays of their Fourier coefficients $\dot{c}_{\dot{\mm}}^{\dot{k}}$ and $c_{\ddot{\mm}}^{\ddot{k}}$ are given by
\begin{align*}
 \abs{\dot{c}_{\dot{\mm}}^{\dot{k}} } &\lesssim_{\dot{M}} \lan{\dot{\mm}}^{-\dot{M}} +  \lan{\dot{\mm}}^{-\dot{\ga}} e^{-\dot{c} 2^{-\dot{k}}\abs{\dot{\mm}}}, \\
 \abs{c_{\ddot{\mm}}^{\ddot{k}}} &\lesssim_{\ddot{M}} \lan{\ddot{\mm}}^{-\ddot{M}} +  \lan{\ddot{\mm}}^{-\ddot{\ga}} e^{-c 2^{-\ddot{k}}\abs{\ddot{\mm}}},
\end{align*}
with $\ds \dot{\ga} >  \f{\dot{n}}{\bar{p}}$ and $\ds \ddot{\ga} > \f{\ddot{n}}{p^*}$, due to our hypothesis \eqref{new5.2}.  Following the computations in the proof of Lemma~\ref{le:main}, we obtain
\[
 \int_{\rr^{\wt{n}}} \dot{\si}_{\dot{k}} \pr{2^{-\dot{k}} {\dot{\xi}}}\ddot{\si}_{\ddot{k}} \pr{2^{-\ddot{k}} {\ddot{\xi}}} \cf \left[\ddot{S}_{\ddot{k}} \dot{S}_{\dot{k}} h\right]\pr{\wt{\xi}}  e^{i\wt{\xi} \cdot \wt{x}}\, d\wt{\xi} = \sum_{\tiny\begin{array}{c} \dot{\mm}\in \zz^{\dot{n}}\\ \ddot{\mm}\in \zz^{\ddot{n}}\end{array}} \dot{c}_{\dot{\mm}}^{\dot{k}} c_{\ddot{\mm}}^{\ddot{k}} \ddot{S}_{\ddot{k}} \dot{S}_{\dot{k}} h\pr{\dot{x} - \f{2\pi}{\dot{R}} 2^{-\dot{k}}\dot{\mm}, \ddot{x} - \f{2\pi}{\ddot{R}} 2^{-\ddot{k}}\ddot{\mm} }.
\]

We first take the $L^{\dot{p}}_{\lan{\dot{x}}^{\dot{a}}}$ norm of the RHS above, and then raise it to the power $\bar{p} = \min \pr{1,\dot{p}}$.  The resulting expression is bounded from above by
\[
C_{\dot{R}} \sum_{\dot{\mm}\in\zz^{\dot{n}}} \abs{ \dot{c}_{\dot{\mm}}^{\dot{k}}}^{\bar{p}}  \lan{2^{-\dot{k}}\dot{\mm}}^{\f{\dot{a}\bar{p}}{\dot{p}}} \n{\sum_{ \ddot{\mm}\in \zz^{\ddot{n}}}  c_{\ddot{\mm}}^{\ddot{k}} \ddot{S}_{\ddot{k}} \dot{S}_{\dot{k}} h\pr{\dot{x}, \ddot{x} - \f{2\pi}{\ddot{R}} 2^{-\ddot{k}}\ddot{\mm}}}{L^{\dot{p}}_{\lan{\dot{x}}^{\dot{a}}}}^{\bar{p}}.
\]
As before, the summation in $\dot{\mm}$ converges uniformly in $k\in \nn$ as long as $\ds \dot{\ga} > \f{\dot{n}}{\bar{p}}$.  This leads to
\[
\n{ \int_{\rr^{{\wt n}}} \dot{\si}_{\dot{k}} \pr{2^{-\dot{k}} {\dot{\xi}}}\ddot{\si}_{\ddot{k}} \pr{2^{-\ddot{k}} {\ddot{\xi}}} \cf \left[\ddot{S}_{\ddot{k}} \dot{S}_{\dot{k}} h\right]\pr{{\wt\xi}}  e^{i{\wt\xi} \cdot {\wt x}}\, d{\wt \xi}}{L^p_{\lan{x}^a}}  \lesssim  \n{\sum_{ \ddot{\mm}\in \zz^{\ddot{n}}}  c_{\ddot{\mm}}^{\ddot{k}} \ddot{S}_{\ddot{k}} \dot{S}_{\dot{k}} h\pr{\dot{x}, \ddot{x} - \f{2\pi}{\ddot{R}} 2^{-\ddot{k}}\ddot{\mm}}}{L^{\dot{p}}_{\lan{\dot{x}}^{\dot{a}}}}.
\]
Taking the $L^{\ddot{p}}_{\lan{\ddot{x}}^{\ddot{a}}}(\rr^{\ddot{n}})$ norm first, and then raising it to the power $p^*$, the resulting expression is bounded by a constant multiple of
\[
\sum_{ \ddot{\mm}\in \zz^{\ddot{n}}}  \abs{c_{\ddot{\mm}}^{\ddot{k}}}^{p^*}    \lan{2^{-\ddot{k}}\ddot{\mm}}^{\f{\ddot{a}p^*}{\ddot{p}}} \n{ \ddot{S}_{\ddot{k}} \dot{S}_{\dot{k}} h }{\lp}^{p^*}.
\]
The summation in $\ddot{\mm}$ converges given $\ds \ddot{\ga} > \f{\ddot{n}}{p^*}$, which leads to the desired inequality \eqref{eq:bip1}.
\end{proof}

Both Lemma~\ref{le:commutator} and Proposition~\ref{prop:key2} extend effortlessly to the mixed norm setting, so we omit the details.  In particular, the following commutator estimates hold:
\begin{align*}
\n{[J^{\dot{s}},    \dot{S}_{\dot{k} -3}   f]  \dot{\De}_{\dot{k}} g}{\lp} &\lesssim 2^{\dot{k}\pr{\dot{s}-1}}  \n{\dot{\na} \dot{S}_{\dot{k}-3} f}{\lpa} \n{\dot{\De}_{\dot{k}} g}{\lpb},  \\
\n{[J^{\ddot{s}},    \ddot{S}_{\ddot{k} -3}   f]  \ddot{\De}_{\ddot{k}} g}{\lp} &\lesssim 2^{\ddot{k}\pr{\ddot{s}-1}} \n{\ddot{\na} \ddot{S}_{\ddot{k}-3} f}{\lpa} \n{\ddot{\De}_{\ddot{k}} g}{\lpb}.
\end{align*}

However, as seen in \cite{OW}, biparameter Kato-Ponce inequality involves more complicated types of commutator estimates.  Below, we will state and prove the corresponding estimates in the weighted setting, which will be the final piece of nontrivial modifications required to establish Theorem~\ref{main3}.

\begin{prop} \label{prop:multi2}
Let all indices satisfy the same conditions as in Lemma \ref{mixed-main}. For each $k,k'\in\nn$, let $T_1^{k,k'}$ and $T_2^{k,k'}$ be two families of bilinear operators defined as follows:
\begin{align*}
T_1^{k,k'}(f,g)\pr{\wt{x}} &:=\int_{\rr^{2\wt{n}}} \pr{\lan{\dot{\xi}+\dot{\eta}}^{\dot{s}}- \lan{\dot{\eta}}^{\dot{s}}} \left( \lan{\ddot{\xi}+\ddot{\eta}}^{\ddot{s}}-\lan{\ddot{\eta}}^{\ddot{s}} \right) \\
&\qquad \qquad \times \wt{\cf} \left[\dot{S}_{\dot{k}-3} \ddot{S}_{\ddot{k}-3} f \right]\pr{\dot{\xi},\ddot{\xi}} \cf\left[ \dot{\De}_{\dot{k}}\ddot{\De}_{\ddot{k}} g \right]\pr{\dot{\eta},\ddot{\eta}} e^{i\wt{x} \cdot (\wt{\xi}+\wt{\eta})} \, d \wt{\xi} \,d\wt{\eta}, \\
T_2^{k,k'}(f,g)\pr{\wt{x}} &:=\int_{\rr^{2\wt{n}}} \pr{\lan{\dot{\xi}+\dot{\eta}}^{\dot{s}}- \lan{\dot{\xi}}^{\dot{s}}} \left( \lan{\ddot{\xi}+\ddot{\eta}}^{\ddot{s}}-\lan{\ddot{\eta}}^{\ddot{s}} \right) \\
&\qquad \qquad \times \wt{\cf} \left[\dot{\De}_{\dot{k}} \ddot{S}_{\ddot{k}-3}f\right]\pr{\dot{\xi},\ddot{\xi}} \cf\left[  \dot{S}_{\dot{k}-3} \ddot{\De}_{\ddot{k}}{g}\right]\pr{\dot{\eta},\ddot{\eta}} e^{i\wt{x} \cdot (\wt{\xi}+\wt{\eta})} \, d \wt{\xi} \,d\wt{\eta}.
\end{align*}
Then, for any  $\dot{k},\ddot{k}\in \nn$ and $f,g\in \mathcal S(\rr^{\wt{n}}),$
\begin{align}
\n{T_{1}^{k,k'}\pr{ f,  g} }{\lp}
&\lesssim 2^{\dot{k}(\dot{s}-1) + \ddot{k}(\ddot{s}-1)} \n{\dot{\na} \ddot{\nabla} \dot{S}_{\dot{k}-3} \ddot{S}_{\ddot{k}-3} f}{\lpa} \n{\dot{\De}_{\dot{k}} \ddot{\De}_{\ddot{k}} g}{\lpb},\notag \\
\n{T_{2}^{k,k'}\pr{f,  g} }{\lp}&\lesssim 2^{\dot{k}(\dot{s}-1) + \ddot{k}(\ddot{s}-1)} \n{  \dot{\De}_{\dot{k}} \ddot{\na} \ddot{S}_{\ddot{k}-3} f}{\lpa} \n{\dot{\na} \dot{S}_{\dot{k}-3} \ddot{\De}_{\ddot{k}} g}{\lpb}. \label{5.3b}
\end{align}
\end{prop}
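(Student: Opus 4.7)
The strategy is to adapt the commutator argument of Proposition~\ref{prop:key2} to the biparameter setting by applying the fundamental theorem of calculus to both symbol differences simultaneously, then reduce the resulting expression to a biparameter extension of Lemma~\ref{le:commutator}. For $T_1^{k,k'}$, the frequency supports of $\dot{S}_{\dot k-3}\ddot{S}_{\ddot k-3}f$ and $\dot{\De}_{\dot k}\ddot{\De}_{\ddot k}g$ force $\dot\eta$ to dominate $\dot\xi$ and $\ddot\eta$ to dominate $\ddot\xi$, so I would write
\begin{align*}
\lan{\dot\xi+\dot\eta}^{\dot s}-\lan{\dot\eta}^{\dot s} &= \dot s\int_0^1 \dot\xi\cdot(t\dot\xi+\dot\eta)\lan{t\dot\xi+\dot\eta}^{\dot s-2}\,dt,\\
\lan{\ddot\xi+\ddot\eta}^{\ddot s}-\lan{\ddot\eta}^{\ddot s} &= \ddot s\int_0^1 \ddot\xi\cdot(t'\ddot\xi+\ddot\eta)\lan{t'\ddot\xi+\ddot\eta}^{\ddot s-2}\,dt',
\end{align*}
insert annular cutoffs $\dot\psi,\ddot\psi$ using $\lan{t\dot\xi+\dot\eta}\sim 2^{\dot k}$ and $\lan{t'\ddot\xi+\ddot\eta}\sim 2^{\ddot k}$, and transfer the outer $\dot\xi,\ddot\xi$ factors as $\dot\na,\ddot\na$ onto $f$. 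This recasts $T_1^{k,k'}(f,g)$, up to a constant multiple of $2^{\dot k(\dot s-1)+\ddot k(\ddot s-1)}$, as a double time-integrated biparameter paraproduct whose compactly supported smooth symbols $\dot\sigma_{\dot k}(\zeta)=\zeta(2^{-2\dot k}+|\zeta|^2)^{(\dot s-2)/2}\dot\psi(\zeta)$ and $\ddot\sigma_{\ddot k}$ (defined analogously) have Fourier transforms that decay faster than any polynomial uniformly in $\dot k,\ddot k$, exactly as the symbols $\Psi_j^{-s}$ are controlled in Lemma~\ref{Bern}(i).

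For $T_2^{k,k'}$ the only modification is which variable is dominant in each factor: since $f$ now carries $\dot\De_{\dot k}$ while $g$ carries $\dot S_{\dot k-3}$, the role of $\dot\xi$ and $\dot\eta$ is swapped in the first difference. I would instead expand
\begin{align*}
\lan{\dot\xi+\dot\eta}^{\dot s}-\lan{\dot\xi}^{\dot s} &= \dot s\int_0^1 \dot\eta\cdot(\dot\xi+t\dot\eta)\lan{\dot\xi+t\dot\eta}^{\dot s-2}\,dt,\\
\lan{\ddot\xi+\ddot\eta}^{\ddot s}-\lan{\ddot\eta}^{\ddot s} &= \ddot s\int_0^1 \ddot\xi\cdot(t'\ddot\xi+\ddot\eta)\lan{t'\ddot\xi+\ddot\eta}^{\ddot s-2}\,dt',
\end{align*}
which transfers $\dot\eta$ to a $\dot\na$ acting on $g$ and $\ddot\xi$ to a $\ddot\na$ acting on $f$, producing precisely the derivatives $\dot\De_{\dot k}\ddot\na\ddot S_{\ddot k-3}f$ and $\dot\na\dot S_{\dot k-3}\ddot\De_{\ddot k}g$ that appear in \eqref{5.3b}. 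The corresponding symbols $\dot\sigma_{\dot k},\ddot\sigma_{\ddot k}$ have the same Schwartz-decay properties as in the $T_1^{k,k'}$ case.

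Both estimates then follow from the biparameter analog of Lemma~\ref{le:commutator}: if $\{\dot\sigma_{\dot k}\}_{\dot k\in\nn}$ and $\{\ddot\sigma_{\ddot k}\}_{\ddot k\in\nn}$ are compactly supported smooth families with uniformly Schwartz-decaying Fourier transforms, then
\begin{align*}
\Big\|\int_0^1\!\int_0^1\!\int_{\rr^{2\wt n}}&\dot\sigma_{\dot k}(2^{-\dot k}(t\dot\xi+\dot\eta))\,\ddot\sigma_{\ddot k}(2^{-\ddot k}(t'\ddot\xi+\ddot\eta))\\
&\times\wt\cf[F](\wt\xi)\,\cf[G](\wt\eta)\,e^{i\wt x\cdot(\wt\xi+\wt\eta)}\,d\wt\xi\,d\wt\eta\,dt\,dt'\Big\|_{\lp}\lesssim \|F\|_{\lpa}\|G\|_{\lpb}
\end{align*}
for $F$ and $G$ with the appropriate biparameter frequency localizations. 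This is proved by expanding $\dot\sigma_{\dot k}$ and $\ddot\sigma_{\ddot k}$ in Fourier series on their respective supporting cubes, converting the operator into a double series $\sum_{\dot\mm,\ddot\mm}\dot c_{\dot\mm}^{\dot k}c_{\ddot\mm}^{\ddot k}(\cdots)$ of joint shifts of $F$ and $G$, applying Minkowski's inequality (valid since $\dot p_1,\ddot p_1\ge 1$) to pull the double time integral out of the $\lpa$ norm, and then summing in $\dot\mm$ and $\ddot\mm$ using the iterated quasi-triangle inequality with exponents $\bar{\dot p}=\min(\dot p,1)$ in the inner $L^{\dot p}_{\dot a}$ norm and $p^{*}=\min(\dot p,\ddot p,1)$ in the outer $L^{\ddot p}_{\ddot a}$ norm, exactly as in Lemma~\ref{mixed-main}.

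The main obstacle I anticipate is ensuring that the two Fourier-series summations decouple cleanly despite the coupling introduced by the mixed norm and the polynomial weights: the translated weight produces growth factors $\lan{2^{-\dot k}\dot\mm}^{\dot a_1/\dot p_1}$ and $\lan{2^{-\ddot k}\ddot\mm}^{\ddot a_1/\ddot p_1}$ that must each be absorbed independently by the rapid decay of the corresponding Fourier coefficients. This is precisely where the Schwartz-type decay of $\wh{\dot\sigma_{\dot k}}$ and $\wh{\ddot\sigma_{\ddot k}}$ (rather than merely the polynomial decay of Lemma~\ref{le:GO}) is essential, and it is exactly the point at which Lemma~\ref{mixed-main} was built to handle the biparameter case. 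Once this uniform decoupled decay is in hand, the summation is standard and produces the bounds stated in the proposition.
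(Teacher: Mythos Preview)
Your proposal is correct and follows essentially the same approach as the paper: apply the fundamental theorem of calculus to each symbol difference, insert annular cutoffs using the support conditions to produce compactly supported symbol families with uniformly Schwartz-decaying Fourier transforms, expand these in Fourier series, and sum the resulting translated paraproducts in the mixed norm using the quasi-triangle inequality at exponent $p^*$ together with the weight inequality $\lan{x+y}\le\lan{x}\lan{y}$. The only cosmetic difference is that the paper carries out the Fourier-series expansion and the mixed-norm summation directly inside the proof (writing out the shifted expressions for $f$ and $g$ explicitly) rather than packaging the argument as a separate biparameter analog of Lemma~\ref{le:commutator}, and it applies \eqref{eq:mixedtri} once at exponent $p^*$ instead of iterating with $\bar p$ then $p^*$.
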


\begin{proof}
Since both estimates can be proved in the similar way, we will only prove \eqref{5.3b} which is slightly more complicated.  Note that the integrand in the definition of $T_2^{k,k'}(f,g)$ is supported on
\[
\left\{\pr{\dot{\xi},\ddot{\xi},\dot{\eta},\ddot{\eta}} : \abs{\dot{\xi}} \sim 2^{\dot{k}}, \,  \abs{\ddot{\xi}} \ll 2^{\ddot{k}}, \, \abs{\dot{\eta}} \ll 2^{\dot{k}}, \,  \abs{\ddot{\eta}} \sim 2^{\ddot{k}}\right\}.
\]

We can write the Fourier symbol of $T_{2}^{k,k'}$ as
\begin{equation}\label{eq:tt}
\pr{\lan{\dot{\xi}+\dot{\eta}}^{\dot{s}}- \lan{\dot{\xi}}^{\dot{s}}} \left( \lan{\ddot{\xi}+\ddot{\eta}}^{\ddot{s}}-\lan{\ddot{\eta}}^{\ddot{s}} \right)
 = ss'\int_0^1 \dot{\eta} \cdot  \pr{ \dot{\xi}+\dot{t} \dot{\eta}} \lan{ \dot{\xi}+\dot{t} \dot{\eta}}^{\dot{s}-2}\, d\dot{t}  \int_0^1 \ddot{\xi} \cdot\pr{\ddot{t}\ddot{\xi}+\ddot{\eta}}\lan{\ddot{t}\ddot{\xi}+\ddot{\eta}}^{s-2} \, d\ddot{t}.
\end{equation}
Noticing that, within the support of the integrand for $T_2^{k,k'}(f,g)$ and for $\dot{t}, \ddot{t} \in [0,1]$, we have
\[
\lan{ \dot{\xi} + \dot{t}\dot{\eta}} \sim  2^{\dot{k}}\quad \textnormal{and} \quad \lan{\ddot{t}\ddot{\xi} + \ddot{\eta}}\sim  2^{\ddot{k}}.
\]
Hence, we may multiply the integrand by  $\dot{\psi}\pr{2^{-\dot{k}} ( \dot{\xi} + \dot{t}\dot{\eta})}$ and $\ddot{\psi}\pr{2^{-\ddot{k}} ( \ddot{\xi} + \ddot{t}\ddot{\eta})}$ for appropriate smooth functions $\dot{\psi}, \ddot{\psi}$ supported on annuli in $\rr^{\dot{n}}$ and $\rr^{\ddot{n}}$, respectively, without altering the value of integral.  For instance, we can rewrite the $d\dot{t}$ integral on the RHS of \eqref{eq:tt} as
\[
\int_0^1 \dot{\eta} \cdot  \left[\pr{ \dot{\xi}+\dot{t} \dot{\eta}} \lan{ \dot{\xi}+\dot{t} \dot{\eta}}^{\dot{s}-2}\dot{\psi}\pr{2^{-\dot{k}} ( \dot{\xi} + \dot{t}\dot{\eta})}\right]\, d\dot{t}= 2^{\dot{k}\pr{\dot{s}-1}} \int_0^1 \dot{\eta} \cdot  \dot{\mathbf{h}}_{\dot{k}}\pr{2^{-\dot{k}} ( \dot{\xi} + \dot{t}\dot{\eta})}\, d\dot{t},
\]
where
\[
\dot{\mathbf{h}}_{\dot{k}}\pr{\dot{\xi}} := \dot{\xi} \pr{2^{-2\dot{k}} + \abs{\dot{\xi}}^2}^{\f{\dot{s}}{2}-1}\dot{\psi}\pr{\dot{\xi}}.
\]
Since $\dot{\psi}$ is supported on an annulus on $\rr^{\dot{n}}$ (in particular, away from the origin), $\{\dot{\mathbf{h}}_{\dot{k}}\}_{\dot{k}\in \nn}$ is a $C^{\infty}$ family of functions supported on some ball $B_{\dot{R}}\pr{\rr^{\dot{n}}}$.  In particular, its derivatives of any order are uniformly bounded in $k\in \nn$.  Thus these $\rr^{\dot{n}}$-valued functions can be expanded into Fourier series whose coefficients, denoted by $\mathbf{c}_{\mm}^{k}$, decay rapidly with constants independent of $\dot{k}\in \nn$;  that is, for any $\dot{M}\gg 1$,
\[
\abs{\dot{\mathbf{c}}_{\dot{\mm}}^{\dot{k}}} \lesssim_{\dot{M}} \lan{\dot{\mm}}^{-\dot{M}}, \quad \forall \dot{\mm}\in \zz^{\dot{n}},
\]
where the implicit constant is independent of $\dot{k}$.  We can treat the $d\ddot{t}$ integral on the RHS of \eqref{eq:tt} similarly where the corresponding Fourier coefficients, denoted as $\mathbf{c}_{\mm'}^{k'}$, have decay rate $\lan{\ddot{\mm}}^{-\ddot{M}}$ for any $\ddot{M}\gg 1$.  Then, the RHS of \eqref{eq:tt} is written as  (up to a constant)
\[
  2^{\dot{k}\pr{\dot{s}-1}} 2^{\ddot{k}\pr{\ddot{s}-1}}\sum_{\tiny \begin{array}{c}\dot{\mm}\in \zz^{\dot{n}}\\ \ddot{\mm}\in \zz^{\ddot{n}}\end{array}}\pr{\int_0^1 \dot{\eta} \cdot \dot{\mathbf{c}}_{\dot{\mm}}^{\dot{k}}  e^{i2^{-\dot{k}} ( \dot{\xi} + \dot{t}\dot{\eta})\cdot \dot{\mm}}\, d\dot{t}}\cdot \pr{\int_0^1 \ddot{\xi}\cdot  \mathbf{c}_{\ddot{\mm}}^{\ddot{k}}  e^{i 2^{-\ddot{k}} ( \ddot{t}\ddot{\xi} + \ddot{\eta}) \cdot \ddot{\mm}}\, d\ddot{t}}.
\]
Putting this expression back into the integrand, we rewrite $T_2(f,g)$ as follows
\begin{align*}
 2^{\dot{k}\pr{\dot{s}-1}} 2^{\ddot{k}\pr{\ddot{s}-1}} \sum_{\tiny \begin{array}{c}\dot{\mm}\in \zz^{\dot{n}}\\ \ddot{\mm}\in \zz^{\ddot{n}}\end{array}} \dot{\mathbf{c}}_{\dot{\mm}}^{\dot{k}} \cdot &\int_0^1     \dot{\na} \dot{S}_{\dot{k}-3}\ddot{\De}_{\ddot{k}} g \pr{\dot{x} + \f{2\pi}{\dot{R}}2^{-\dot{k}} \dot{t}\dot{\mm},\ddot{x} + \f{2\pi}{\ddot{R}}2^{-\ddot{k}} \ddot{\mm}} \, d\dot{t}\\
 & \times  \mathbf{c}_{\ddot{\mm}}^{\ddot{k}}\cdot \int_0^1     \dot{\De}_{\dot{k}} \ddot{\na} \ddot{S}_{\ddot{k}-3} f\pr{ \dot{x} + \f{2\pi}{\dot{R}}2^{-\dot{k}} \dot{\mm},\ddot{x} + \f{2\pi}{\ddot{R}}2^{-\ddot{k}} \ddot{t}\ddot{\mm} }  \, d\ddot{t}.
\end{align*}
Now, we can take the $\lp$~norm of this expression and apply \eqref{eq:mixedtri} as well as H\"older's and Minkowski's inequalities to show that $\n{T_2^{k,k'}(f,g)}{\lp}^{p^*}$ is bounded by
\begin{align*}
2^{\dot{k}\pr{\dot{s}-1} + \ddot{k}\pr{\ddot{s}-1}} \sum_{\tiny \begin{array}{c}\dot{\mm}\in \zz^{\dot{n}}\\ \ddot{\mm}\in \zz^{\ddot{n}}\end{array}} \abs{ \dot{\mathbf{c}}_{\dot{\mm}}^{\dot{k}}  \mathbf{c}_{\ddot{\mm}}^{\ddot{k}}}^{p^*}  & \pr{\int_0^1 \n{  \dot{\na} \dot{S}_{\dot{k}-3}\ddot{\De}_{\ddot{k}} g \pr{\dot{x} + \f{2\pi}{\dot{R}}2^{-\dot{k}} \dot{t}\dot{\mm},\ddot{x} + \f{2\pi}{\ddot{R}}2^{-\ddot{k}} \ddot{\mm}} }{\lpb}\, d\dot{t}}^{p^*}\\
&\times \pr{\int_0^1  \n{  \dot{\De}_{\dot{k}} \ddot{\na} \ddot{S}_{\ddot{k}-3} f\pr{ \dot{x} + \f{2\pi}{\dot{R}}2^{-\dot{k}} \dot{\mm},\ddot{x} + \f{2\pi}{\ddot{R}}2^{-\ddot{k}} \ddot{t}\ddot{\mm} }   }{\lpa} \, d\ddot{t}}^{p^*}.
\end{align*}
Translating along $\dot{x}, \ddot{x}$, taking supremum in $\dot{t}, \ddot{t} \in [0,1]$ as well as $\dot{k}, \ddot{k}\in \nn$, and applying the trivial estimate $\lan{x+y}\le \lan{x}\lan{y}$, the  above is bounded by
\[
2^{\dot{k}\pr{\dot{s}-1} + \ddot{k}\pr{\ddot{s}-1}} \sum_{\tiny \begin{array}{c}\dot{\mm}\in \zz^{\dot{n}}\\ \ddot{\mm}\in \zz^{\ddot{n}}\end{array}} \abs{ \dot{\mathbf{c}}_{\dot{\mm}}^{\dot{k}}  \mathbf{c}_{\ddot{\mm}}^{\ddot{k}}}^{p^*} \lan{\dot{\mm}}^{\f{2\dot{a} p^*}{ \dot{p}_1}}\lan{\ddot{\mm}}^{\f{2\ddot{a} p^*}{ \ddot{p}_1}}    \n{  \dot{\na} \dot{S}_{\dot{k}-3}\ddot{\De}_{\ddot{k}} g  }{\lpb}^{p^*}\, \n{  \dot{\De}_{\dot{k}} \ddot{\na} \ddot{S}_{\ddot{k}-3} f   }{\lpa}^{p^*}.
\]
The series above converges due to the rapid decay of the coefficients, leading to the desired estimate.
\end{proof}

The remainder of the proof of Theorem~\ref{main3} follows from the same arguments from \cite{OW}, together with the modified lemmas given in this section.  The details are omitted.

\end{document}